\newlength{\dwidehatheight}
\newcommand{\doublehat}[1]{%
    \settoheight{\dwidehatheight}{\ensuremath{#1}}%
    \addtolength{\dwidehatheight}{-0.25ex}%
    \widehat{\vphantom{\rule{1pt}{\dwidehatheight}}%
    \smash{\widehat{#1}\hspace{2pt}}}}
\theoremstyle{plain}
\newtheorem{theorem}{Theorem}
\newtheorem{lemma}{Lemma}
\theoremstyle{definition}
\newtheorem{example}{Example}
\newtheorem{remark}{Remark}
\newtheorem{definition}{Definition}
\newtheorem{claim}{Claim}
\numberwithin{equation}{section}
\DeclareMathOperator{\Ima}{Im}
\DeclareMathOperator{\vcdim}{VC-dim}
\newcommand{\uX}{\ensuremath{\underline{X}}\xspace}
\newcommand{\uY}{\ensuremath{\underline{Y}}\xspace}
\newcommand{\U}{\ensuremath{U}\xspace}
\newcommand{\M}{\ensuremath{\mathcal{M}}}
\newcommand{\covectors}{\ensuremath{\mathcal{L}}}
\newcommand{\cocircuits}{\ensuremath{\mathcal{C}^*}}
\newcommand{\circuits}{\ensuremath{\mathcal{C}}}
\newcommand{\topes}{\ensuremath{\mathcal{T}}}
\newcommand{\rk}{\ensuremath{{\rm rank}}}
\newcommand{\product}{\square}
\newcommand{\osc}{\ensuremath{{\rm osc}}}
\newcommand{\cross}{\ensuremath{{\rm cross}}}
\newcommand{\hS}{\widehat{S}}
\newcommand{\hhS}{\doublehat{S}}
\newcommand{\hhQ}{\doublehat{Q}}
\newcommand{\ttT}{\ensuremath{{\widetilde{T}}}}
\newcommand{\Q}{\mathrm Q}
\renewcommand{\SS}{\ensuremath{\mathcal{S}}}
\newcommand{\hQ}{\ensuremath{{\widehat{Q}}}}
\newcommand{\OM}{\mathrm{OM}}
\newcommand{\ReLU}{\mathrm{ReLU}}
\newcommand{\Sam}{\mathrm{\bf Samp}}
\DeclareMathOperator{\pr}{\rm pr}
\DeclareMathOperator{\cube}{\mathrm Q}
\DeclareMathOperator{\Sep}{\mathrm Sep}
\DeclareMathOperator{\face}{\mathrm{F}}
\DeclareMathOperator{\vcd}{VC-dim}
\renewcommand{\SS}{\ensuremath{\mathcal{S}}}
\definecolor{color1}{RGB}{255,128,0}
\begin{document}
	
	\centerline{\bf\Large Labeled sample compression schemes}
	\centerline{\bf\Large  for complexes of oriented matroids}

	\bigskip
	\centerline{\sc \large Victor Chepoi$^{1}$, Kolja Knauer$^{1,2}$, and Manon
	Philibert$^{1}$}
	
	\bigskip
	\centerline{$^{1}$LIS, Aix-Marseille Universit\'e, CNRS, and Universit\'e
	de Toulon}
	\centerline{Facult\'e des Sciences de Luminy, F-13288 Marseille Cedex 9,
	France}
	\centerline{ {\sf
	\{victor.chepoi,kolja.knauer,manon.philibert\}@lis-lab.fr} }
	
	\bigskip
	\centerline{$^{2}$Departament de Matem\`atiques i Inform\`atica,
	Universitat de Barcelona (UB),}
	\centerline{Barcelona, Spain}

%\begin{abstract}
%	We show that the topes of a complex of oriented matroids
%(abbreviated COM) of VC-dimension $d$ admit a proper labeled sample
%compression
%scheme of size $d$. This considerably extends results of Moran and Warmuth and
%the authors and is a step towards the sample compression conjecture -- one of
%the oldest open %theoretical
%problems in computational learning theory. On the one hand our approach
%exploits the rich combinatorial cell structure of COMs via oriented matroid
%theory. On the other hand viewing tope graphs of COMs as partial cubes creates
%a fruitful link to metric graph theory.
%\end{abstract}

%\begin{abstract}
%We show that the topes of a complex of oriented matroids of rank $d$ admit a
%proper labeled sample compression scheme of size $d$.
%\end{abstract}

\bigskip\medskip
\noindent
{\footnotesize {\bf Abstract}
	We show that the topes of a complex of oriented matroids (abbreviated COM)
	of VC-dimension $d$ admit a proper labeled sample compression scheme of
	size $d$. This considerably extends results of Moran and Warmuth on ample classes, of
Ben-David and Litman on affine arrangements of hyperplanes,
    and of the authors on complexes of uniform oriented matroids, and is a step
    towards the sample compression conjecture -- one of
	the oldest open problems
	in computational learning theory. On the one hand, our approach exploits
	the rich combinatorial cell structure of COMs via oriented matroid theory.
	On the other hand, viewing tope graphs of COMs as partial cubes creates a
	fruitful link to metric graph theory.}

\section{Introduction}

\subsection{General setting} Littlestone and Warmuth~\cite{LiWa} introduced sample compression schemes as an
abstraction of the underlying structure of learning algorithms. Roughly,
the aim of a sample compression scheme is to compress samples of a
\emph{concept class} (i.e., of a set system) $\mathcal C$ as much  as possible, such that data coherent
with the original samples can be reconstructed from the compressed data.
%
%The idea is
%that a sample compression scheme of size $k$ allows to compress any list of
%labeled examples to a list of size $k$, while still being able to reconstruct
%any element of the list.
%The possible labeled lists of examples are defined by a \emph{concept class} $\mathcal C$,
%i.e., a set system.
There are two
types of sample compression schemes: labeled, see~\cite{LiWa,FlWa} and unlabeled, see
\cite{Fl,BDLi,KuWa}. A {labeled compression scheme}  of size $k$ compresses
every sample of $\mathcal C$ to a labeled subsample of size at most $k$ and  an
{unlabeled compression scheme} of size $k$ compresses every sample  of
$\mathcal C$ to a subset of size at most $k$ of the domain of the sample (see the
end of the introduction for precise definitions). The Vapnik-Chervonenkis
dimension (\emph{VC-dimension}) of a set system, was introduced by~\cite{VaCh}
as a complexity measure of set systems. VC-dimension
is central in PAC-learning and  plays an important role in combinatorics,
algorithmics, discrete geometry, and combinatorial optimization. In particular,
it coincides with the rank in the theory of (complexes of) oriented matroids.
Furthermore, within machine learning and closely tied to the topic of this
paper, the \emph{sample compression conjecture} of
\cite{FlWa} and~\cite{LiWa} states that \textit{any set system of VC-dimension
$d$ has a labeled
sample compression scheme of size $O(d)$.} This question remains one of
the oldest open problems in computational learning theory.

\subsection{Related work} The best-known
general upper bound is due to Moran and Yehudayoff~\cite{MoYe} and shows that
there exist labeled compression schemes of size $O(2^d)$ for any set system of
VC-dimension $d$. The labeled compression scheme of~\cite{MoYe} is not proper (i.e., does not
necessarily return a set from the input set system)
and
it is even open if there exist proper labeled sample compression schemes which
compress samples with support larger than $d$ to
subsamples with strictly smaller support~\cite{Mo_pers}.
From below, Floyd and
Warmuth~\cite{FlWa} showed that there are classes of VC-dimension $d$
admitting no labeled compression scheme of size less than $d$ and that no concept class of VC-dimension $d$ admits a labeled compression scheme of size at most $\frac{d}{5}$.
P\'alv\"olgyi and Tardos~\cite{PaTa}
exhibited a concept class of VC-dimension $2$ with no unlabeled
compression scheme of size $2$. However, no similar results are known %it is not known if a similar lower bound holds
for labeled sample compression schemes. Prior to~\cite{PaTa}, it was shown in~\cite{Ney} that the concept class of
positive halfspaces in ${\mathbb R}^2$ (which has VC-dimension 2) does not admit proper unlabeled sample compression schemes of size 2.

For more structured concept classes better upper bounds are known.
Ben-David and Litman\cite{BDLi} proved a compactness lemma, which reduces the existence
of labeled or unlabeled compression schemes for arbitrary concept classes to
finite concept classes. They also obtained unlabeled compression schemes for regions in arrangements of affine hyperplanes
(which correspond to realizable affine oriented matroids in our language). Finally, they obtained
sample compression schemes for concept classes by embedding them into concept classes
for which such schemes were known. Helmbold, Sloan, and Warmuth~\cite{HeSlWa} constructed unlabeled compression
schemes of size $d$ for intersection-closed concept classes of VC-dimension $d$. They compress each
sample to a minimal generating set %for this sample
and show that the size of this set is upper bounded by the VC-dimension. An
important class for which positive results are available is given by ample
set systems~\cite{BaChDrKo,Dr} (originally introduced as lopsided sets by Lawrence~\cite{La}). They
capture an important variety of combinatorial objects, e.g., (conditional)
antimatroids, see~\cite{EdJa}, diagrams of (upper locally) distributive lattices,
median graphs or CAT(0) cube complexes, see~\cite{BaChDrKo} and were rediscovered in
various disguises, e.g. by~\cite{BoRa} as \emph{extremal for (reverse) Sauer} and
by~\cite{MeRo} as \emph{shattering-extremal}~\cite{MeRo}. Moran and Warmuth~\cite{MoWa} provide
labeled sample compression schemes of size $d$ for ample set systems of
VC-dimension $d$. For maximum concept classes (a subclass of ample set systems)
unlabeled sample compression schemes of size $d$ have been designed by Chalopin
et al. ~\cite{ChChMoWa}. They also characterized unlabeled
compression schemes for ample classes via the existence of
\emph{unique sink orientations} of their graphs. However, the existence of such orientations remains open.

\subsection{OMs and COMs}\label{ss:OM-COM}
A structure somewhat opposed to ample classes are Oriented Matroids
(OMs), see the book of Bj\"{o}rner et al.~\cite{BjLVStWhZi}. Co-invented by Bland and Las Vergnas~\cite{BlLV}
and Folkman and Lawrence~\cite{FoLa}, and further investigated  by Edmonds and Mandel~\cite{ed-ma-82} and
many other authors, oriented matroids  represent a unified combinatorial theory of orientations of ordinary matroids,
which simultaneously captures the basic properties of sign vectors representing the regions in a hyperplane
arrangement  in ${\mathbb R}^d$ and of  sign vectors of the circuits  in a directed
graph. OMs provide a framework for the analysis of
combinatorial properties of geometric configurations occurring  in discrete
geometry and in machine learning. Point and vector configurations, order types,
hyperplane and pseudo-line arrangements, convex polytopes, directed graphs, and
linear programming find a common generalization in this language. The
Topological Representation Theorem of~\cite{FoLa} connects the theory of OMs on a
deep level to arrangements of pseudohyperplanes and distinguishes it from the
theory of ordinary matroids.

Complexes of Oriented Matroids (COMs) were introduced
by Bandelt, Chepoi, and Knauer~\cite{BaChKn} as a natural common generalization of ample classes and OMs.
Ample classes are exactly the COMs with cubical cells, while OMs are
the COMs with a single cell. In general COMs, the cells are OMs and the
resulting
cell complex is contractible. In the realizable setting, a COM corresponds to
the intersection pattern of a hyperplane arrangement with an open convex set, see
Figure~\ref{fig:COMrealisable}. Examples of COMs
neither contained in the class of OMs nor in ample classes include linear extensions
of a poset or acyclic orientations of mixed graphs, see~\cite{BaChKn}, CAT(0)
Coxeter complexes of~\cite{HaPa}, hypercellular and Pasch
graphs, see~\cite{ChKnMa}, and Affine Oriented Matroids through~\cite{Baum} and~\cite{DeKn}.
Note that none of the listed examples is contained in the classes of OMs or ample classes.
Apart from the above, COMs already lead to new results and questions in various areas such as combinatorial semigroup
theory by~\cite{Margolis}, algebraic combinatorics in relation to the Varchenko
determinant by~\cite{Hochstattler,Hochstattler2}, neural codes~\cite{ItKuRo}, poset cones, see~\cite{DoKiRe},
 as well as sweeping sequences, see~\cite{PaPh}. In
particular, relations to COMs have already been established within sample
compression, by~\cite{ChKnPh_2d,ChKnPh_CUOM,Ma_Om} and ~\cite{ChChMoWa}.
A central feature of COMs is that they can be studied via their tope graphs, see Figure~\ref{fig:COMrealisable}. % for an example.
Indeed, the characterization of their tope graphs by~\cite{KnMa}
establishes an embedding of the theory of COMs into metric graph theory, with
theoretical and algorithmic implications. Namely, tope graphs of COMs form
a subclass of the ubiquitous metric graph class of partial cubes, i.e.,
isometric subgraphs of hypercubes, with applications ranging from
interconnection networks~\cite{GrPo} and  media theory~\cite{EpFaQv}, to chemical
graph theory~\cite{Eppp}. On the other hand, tope graphs of COMs can be
recognized in polynomial time~\cite{Epp,KnMa}. The graph theoretic view has
been used in several recent publications, see~\cite{KnMa1,Ma17,Ch_Johnson} and is essential to our work. %the current work.

\begin{figure}[htb]
	\centering
	\includegraphics[width=0.8\linewidth]{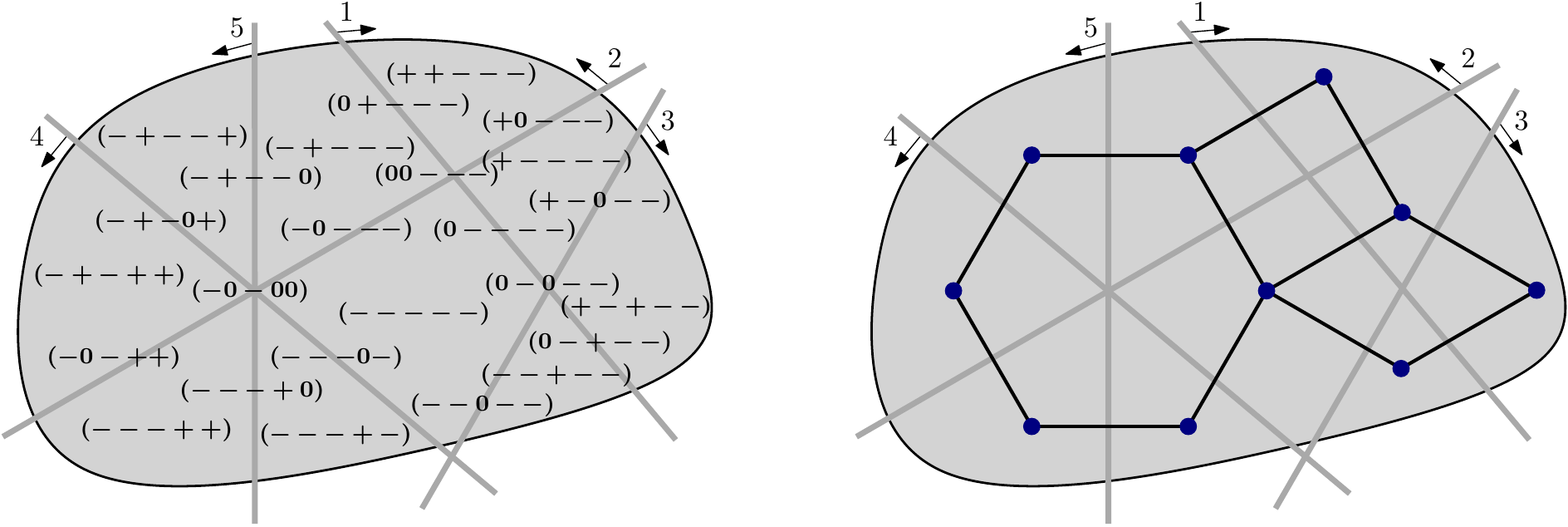}
	\caption{A realizable COM and its tope graph.
		\label{fig:COMrealisable}
	}
\end{figure}

\subsection{Labeled sample compression schemes}
As we explain later, COMs can be defined as sets of sign vectors, which is
another unifying feature for OMs and ample classes. This turns out to be
beneficial for the present paper, since the language of sign vectors is
perfectly suited for defining sample compression schemes formally. The
following formulation is due to~\cite{ChChMInRaVa}, for classical formulations,
see~\cite{LiWa,MoWa,MoYe}. Let $\U$ be a finite set, called the \emph{universe}
and $\mathcal C$ be a family of subsets of $\U$, called a \emph{concept class}
and whose elements
are called \emph{concepts}.
We  view $\mathcal C$ as a set of $\{ -1,+1\}$-vectors, i.e.,  ${\mathcal C}
\subseteq \{-1,+1\}^U$. We also consider sets  of $\{ -1,0,+1\}$-vectors, i.e.,
subsets of $\{\pm 1,0\}^U$ endowed with  the product order $\le$ between sign
vectors relative to the ordering $0 \leq -1,+1$. The sign vectors of the set
%$\downarr\mathcal C=\bigcup_{C\in {\mathcal C}}\{ S\in \{-1,0,+1\}^U: S\leq
%C\}$
$\Sam(\mathcal C)=\bigcup_{C\in {\mathcal C}}\{ S\in \{\pm 1,0\}^U: S\leq
C\}$
%$\Omega(\mathcal C)=\bigcup_{C\in {\mathcal C}}\{ S\in \{-1,0,+1\}^U: S\leq
%C\}$
are \emph{realizable samples} for $\mathcal C$.

\begin{definition} [Labeled sample compression schemes] \label{def:LSCS}
A \emph{labeled sample
compression scheme}  of size $k$ for a concept class ${\mathcal C} \subseteq
\{-1,+1\}^U$ is a pair $(\alpha,\beta)$ of mappings, where
$\alpha: \Sam(\mathcal C) \to \{\pm 1,0\}^U$ is called the  \emph{compression
function} and $\beta: \Ima (\alpha)  \to \{ -1, +1\}^U$ the
\emph{reconstruction function} such that for any realizable sample
$S\in \Sam(\mathcal C)$, it holds $\alpha(S)\le S\le \beta(\alpha(S)) \mbox{
and } |\underline{\alpha}(S)|\le k,$ where $\underline{\alpha}(S)$ is the
support of the sign vector $\alpha(S)$, i.e., the non-zero entries of $\alpha(S)$.
A labeled sample compression scheme is \emph{proper} if
$\beta(\alpha(S))\in\mathcal{C}$ for all $S\in \Sam(\mathcal{C})$.
\end{definition}

The condition $S\le \beta(\alpha(S))$
means that the restriction of $\beta(\alpha(S))$ on the support of $S$
coincides with the input sample $S$. In particular, if $S$ is a concept of
$\mathcal{C}$, then $\beta(\alpha(S))=S$, i.e., the reconstructor must
reconstruct the input concept.
 Notice
that the
labeled compression schemes of size $O(2^d)$ of~\cite{MoYe} are not proper
(i.e., $\beta(\alpha(S))$ is not necessarily a concept of ${\mathcal C}$)
and they use additional information. The compression schemes developed in
\cite{ChChMInRaVa} for balls in graphs are proper but also use additional
information. The \emph{unlabeled sample compression schemes}~\cite{KuWa}
(which are not the subject of this paper) are defined analogously, with the
difference that in the unlabeled case $\alpha(S)$ is a subset of size at most
$k$ of the support of $S$.

The definition of labeled compression scheme implies that if ${\mathcal C}'\subseteq {\mathcal C}$ and $(\alpha,\beta)$ is a
labeled sample compression scheme for $\mathcal C$, then $(\alpha,\beta)$ is a
labeled sample compression scheme for ${\mathcal C}'$. However,
$(\alpha,\beta)$ is in general not proper for  ${\mathcal C}'$.
Still, this yields an approach (suggested in~\cite{RuRuBa} and implicit in
\cite{FlWa}) to obtain improper schemes. For instance, using the result of~\cite{MoWa}
that ample classes of VC-dimension $d$ admit
labeled sample compression schemes of size $d$, one can try to extend a
given set system to an ample class without increasing the VC-dimension too
much and then apply their result. %Two recent works go into this
%direction:
In~\cite{ChKnPh_2d} it is shown that partial cubes of VC-dimension 2
can be extended to ample classes of VC-dimension 2. Furthermore,
in~\cite{ChKnPh_CUOM} it is shown that OMs and complexes of uniform oriented
matroids (CUOMs) can be extended to ample classes without increasing the
VC-dimension. Thus, in these classes there exist improper labeled sample
compression schemes whose size is the VC-dimension. On the other hand,
there exist partial cubes of VC-dimension 3 that cannot be
extended to ample classes of VC-dimension 3, see~\cite{ChKnPh_CUOM},
as well as set systems of VC-dimension 2, that cannot be extended to partial
cubes of VC-dimension 2, see~\cite{ChKnPh_2d}. In~\cite{ChKnPh_CUOM} it is
conjectured that every COM of VC-dimension $d$ can be extended to an ample
class of VC-dimension $d$. This would yield improper labeled sample
compression schemes for COMs of size $d$.

\subsection{Our result}
In this paper, we follow a different strategy to give (stronger) proper labeled sample
compression schemes of size $d$ for general COMs of VC-dimension $d$,  see Theorem~\ref{main}. More
precisely,
we show that the set systems defined by the topes of COMs satisfy
the strong form of the sample compression conjecture, i.e., COMs of
VC-dimension $d$ admit \emph{proper labeled sample compression schemes of size
$d$.}

Our work  substantially extends the result of~\cite{MoWa}  for ample concept classes,  the result of~\cite{BDLi}
for concept classes arising from arrangements of affine hyperplanes (i.e., realizable Affine Oriented Matroids), and
our results~\cite{ChKnPh_CUOM} for OMs and CUOMs. Many classes of COMs are only covered by this new result.
For example, the classes of COMs mentioned in Subsection~\ref{ss:OM-COM}
are neither ample, nor affine, nor uniform. Some of these examples are realizable and can be embedded into realizable Affine Oriented Matroid to which one can apply the result of~\cite{BDLi}. However, this will lead only improper compression schemes. One important class of COMs,  which is neither realizable, nor ample,
nor affine, nor uniform, is the class of non-realizable OMs. By the  \emph{Topological Representation Theorem of Oriented
Matroids} of Folkman and Lawrence~\cite{FoLa}, the topes of OMs can be characterized as the inclusion maximal cells
of an arrangement of pseudohyperplanes. An OM is non-realizable if it it represented by a non-stretchable arrangement, i.e., an arrangement whose pseudohyperplanes cannot be replaced by linear hyperplanes.
%
% They are defined by non-stretchable arrangements
% of pseudohyperplanes and pseudospheres.

To illustrate the representation by pseudohyperplanes, in Figure~\ref{fig:AOM} we give an example of an arrangement $U$ of pseudolines in ${\mathbb R}^2$ and its graph
of regions, i.e., the tope graph of the resulting COM. While this example is stretchable, there are many non-stretchable arrangements. Indeed, most OMs are non-realizable~\cite[Theorems 7.4.2 and 8.7.5]{BjLVStWhZi}. Deciding stretchability of a pseudoline arrangement and more generally realizability of an OM is a complete problem of the existential theory of the reals, hence in particular NP-hard, see~\cite{Sh}. By a result of  Edmonds and Mandel~\cite{ed-ma-82}, all arrangements of pseudohyperplanes can be considered piecewise-linear.

\begin{figure}[htb]
\centering
\includegraphics[width=0.80\textwidth]{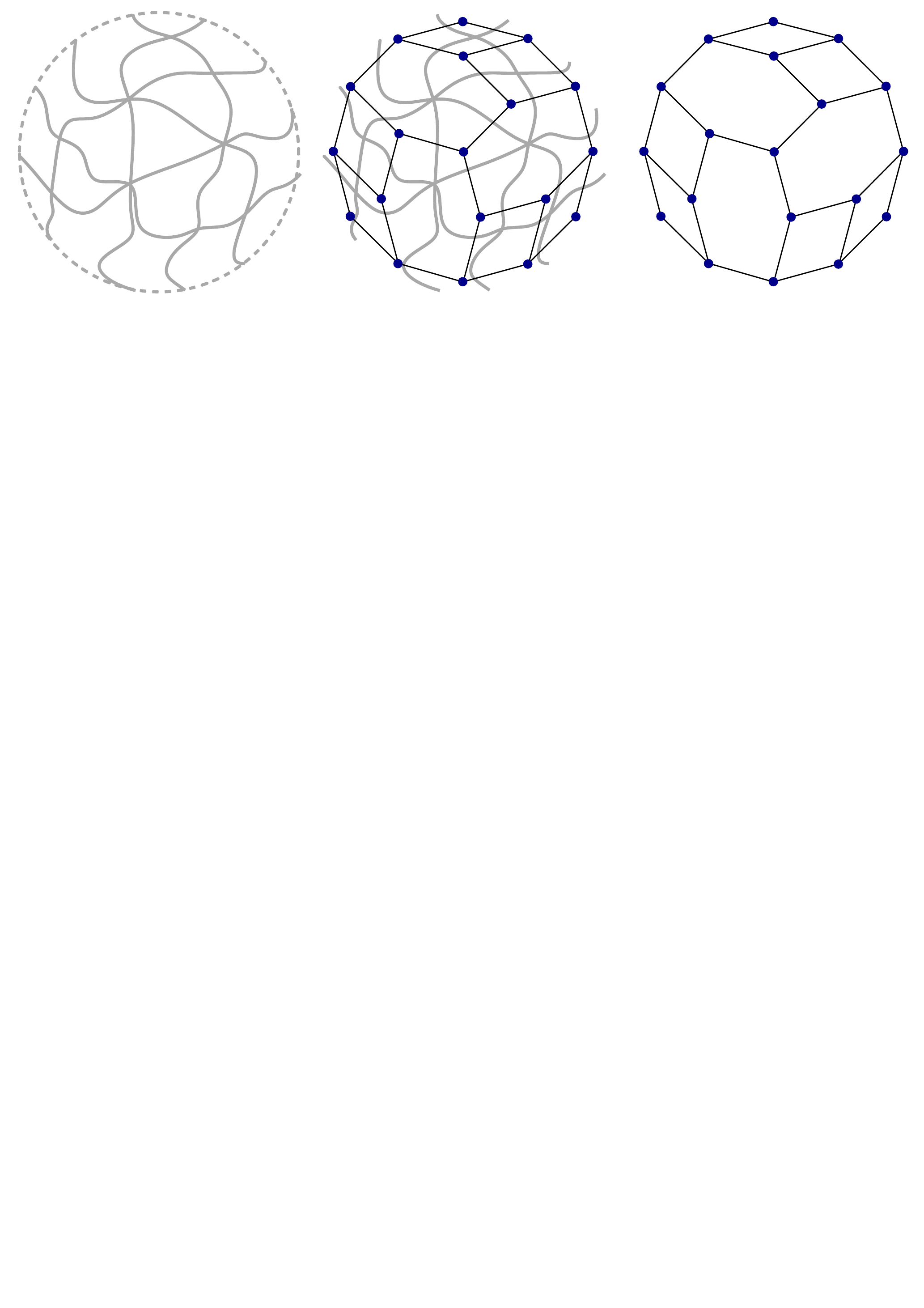}
\caption{A pseudoline arrangement $U$ and its region graph.}
\label{fig:AOM}
\end{figure}

\subsection{Pseudohyperplane arrangements and Machine Learning}  Pseudohyperplane arrangements have already arisen in the context of sample compression schemes and VC-dimension in~\cite{GaWe,Mo,RuRu_COLT,RuRu_JMLR} in the treatment of maximum and ample classes.  More recently, particular piecewise-linear pseudohyperplane arrangements and their regions occurred in the study of
deep feedforward
neural networks with ReLU activations~\cite{DeVoHaPe,GrLi,HaRo_ICML,HaRo_NIPS,MoPaChBe}.  In this theory they appear under the names ``arrangements of bent hyperplanes'' and ``activation regions'', respectively. Recall that a (trained)
feedforward neural network used to answer Yes/No (i.e., $\{ -1,+1\}$) classification problems is a particular type of function $F: {\mathbb R}^d\rightarrow {\mathbb R}$. The inputs to $F$ are data feature vectors
and the outputs are used to answer the binary classification problem by partitioning the input space ${\mathbb R}^d$ into activation regions.

Next,  we closely follow~\cite{GrLi} and~\cite{HaRo_NIPS}. A ReLU function $\ReLU: {\mathbb R}\rightarrow {\mathbb R}$ is defined by $\ReLU(x)=\max\{ 0,x\}$. ReLU is among the most popular activation functions for deep neural networks.
Let $\sigma: {\mathbb R}^d\rightarrow {\mathbb R}^d$ denote the function that applies ReLU to each coordinate.
Let $n_0,\ldots,n_k,n_{k+1}=1$ be a sequence of natural numbers and let $A_i:{\mathbb R}^{n_{i-1}}\rightarrow {\mathbb R}^{n_i}, i=1,\ldots,k+1$ be (parametrized) affine maps. A ReLU (Rectified Linear Unit) network $\mathcal N$ of
architecture $(n_0,\ldots,n_k)$, depth $k+1$, and  $n:=\sum_{i=0}^m n_i$ neurons is a neural network in which the map $F$ is defined as the composition of the layer maps $F_1=\sigma\circ A_1, \ldots, F_k=\sigma\circ A_k, F_{k+1}=A_{k+1}$.
An activation pattern for $\mathcal N$ is an assignment of a $\{ -1,+1\}$-sign to each neuron.  Given a  vector $\theta$ of trainable parameters,  the activation pattern of the neurons
defines a partition of the input space ${\mathbb R}^d$ into activation regions. The activation regions  can be viewed as the regions defined by the arrangement of bent hyperplanes associated to layers; for the precise definition see ~\cite[Section 6]{GrLi} and~\cite{HaRo_NIPS}. Activation regions are convex polyhedra~\cite{HaRo_NIPS} and one of important questions in the complexity
analysis of deep ReLU networks is counting the number of such activation regions~\cite{HaRo_ICML,HaRo_NIPS,MoPaChBe}. Notice that the arrangements of bent hyperplanes may not be arrangements of pseudohyperplanes in the classical sense~\cite{BjLVStWhZi} because
two bent hyperplanes may not intersect transversally. Transversality of arrangements of  bent hyperplanes was investigated in depth in the recent paper~\cite{GrLi}. It will be interesting to further investigate how sample compression
schemes can be useful in the setting of deep ReLU networks.

\section{Preliminaries}

\subsection{OMs and COMs}

We recall the basic %notions %and results
%from
%the
theory OMs
and COMs from~\cite{BjLVStWhZi} and~\cite{BaChKn}, respectively.
Let $U$ be a set of size $m$ and let $\covectors$ be a {\it system of
sign vectors}, i.e., maps from $U$ to $\{-1,0,+1\}$. The elements of
$\covectors$ are referred to as \emph{covectors} and denoted by capital letters
$X, Y, Z$. For $X \in \covectors$, the subset $\uX= \{e\in U:
X_e\neq 0\}$ is the \emph{support} of $X$ and  its complement
$X^0=U\setminus \uX=\{e\in U: X_e=0\}$ is the \emph{zero set} of $X$.
%For a sign vector $X$ and a subset $A\subseteq U$, let $X_A$ be the restriction
%of $X$ to $A$.
For $X,Y\in \covectors$, $\Sep(X,Y)=\{e\in U: X_eY_e=-1\}$ is the
\emph{separator} of $X$ and $Y$. The \emph{composition} of $X$ and $Y$ is the
sign vector $X\circ Y$, where for all $e\in U$,
$(X\circ Y)_e = X_e$ if $X_e\ne 0$  and  $(X\circ Y)_e=Y_e$ if $X_e=0$.
Let $\leq$ be the product ordering on $\{\pm 1,0\}^{U} $
relative to the ordering $0 \leq -1, +1$.
A system of sign vectors $(U,\covectors)$ is \emph{simple}
if for each $e \in U$,
$\{X_e: X\in \covectors\}=\{-1,0,+1\}$ and for all $e\neq f$ %in $U$,
there exist $X,Y \in \covectors$ with $\{X_eX_f,Y_eY_f\}=\{+1, -1\}$.
In this paper, we  consider only simple systems of sign vectors.

\begin{definition}  [OMs] \label{def:OM}
	An \emph{oriented matroid} (OM) is a system of sign vectors
	$\M=(U,\covectors)$ satisfying
	\begin{itemize}
        \item[{\bf (Z)}]  the zero sign vector ${\bf 0}$ belongs to $\covectors$.
		\item [{\bf (C)}] ({\sf Composition)} $X\circ Y \in \covectors$ for all
		$X,Y \in \covectors$.
		\item[{\bf (SE)}] ({\sf Strong elimination}) for each pair
		$X,Y\in\covectors$ and for each $e\in \Sep(X,Y)$, there exists $Z \in
		\covectors$ such that $Z_e=0$ and $Z_f=(X\circ Y)_f$ for all $f\in
		U\setminus \Sep(X,Y)$.
		\item[{\bf (Sym)}] ({\sf Symmetry}) $-\covectors=\{ -X: X\in
		\covectors\}=\covectors,$ that is, $\covectors$ is closed under sign
		reversal.
	\end{itemize}
\end{definition}
Notice that the axiom (Z) is implied by the three other axioms.   %We  only consider \emph{simple}
%systems of sign-vectors $\covectors$, i.e., if
%for each $e \in U$,
%$\{X_e: X\in \covectors\}=\{-1,0,+1\}$ and for all $e\neq f$ %in $U$,
%there exist $X,Y \in \covectors$ with $\{X_eX_f,Y_eY_f\}=\{+1, -1\}$. %We will
%consider only simple systems of sign-vectors. %, without explicitly stating it
%%every time.
%
%Let $\leq$ be the product ordering on $\{\pm 1,0\}^{U} $
%relative to the ordering $0 \leq -1, +1$.
The poset $(\covectors,\le)$ of an OM $\M$ with an artificial global maximum
$\hat{1}$ forms the (graded)
\emph{big face lattice} $\mathcal{F}_{\mathrm{big}}(\M)$. The length of
maximal chains of $\mathcal{F}_{\mathrm{big}}(\M)$ minus $1$
is the \emph{rank} of $\covectors$ and denoted $\rk(\M)$. The rank of the
underlying matroid $\underline{\M}$ equals $\rk(\M)$~\cite[Thm 4.1.14]{BjLVStWhZi}.
The \emph{topes} $\topes$ of $\M$ are the co-atoms of
$\mathcal{F}_{\mathrm{big}}(\M)$. %From (C), (Sym), and (SE) it follows that
By simplicity the topes %${\mathcal T}$
%of any simple OM %$\M$
are  $\{-1,+1\}$-vectors and  ${\mathcal T}$ can be seen as a family of subsets
of $U$.
For each $T\in\mathcal{T}$, an element
$e\in U$ belongs to the corresponding
set if and only if $T_e=+1$. % and does not belong
%to the set otherwise.
The \emph{tope graph} $G(\M)$ of an OM $\M$ is the 1-inclusion
graph of the set $\mathcal T$ of topes of $\covectors$, i.e.,
the subgraph of the hypercube $\Q(U)$ induced by the vertices corresponding to
$\topes$, see Figure~\ref{fig:COMrealisable}.
%It is well-known that tope
%graphs of OMs are partial cubes and that $\M$ can be recovered up to
%isomorphism from its
%tope graph  $G(\M)$, see e.g.~\cite{BjLVStWhZi}. \emph{Thus, OMs can be treated
%as tope graphs.}

In \emph{realizable OMs} (i.e., OMs arising from central hyperplane arrangements
of ${\mathbb R}^d$), $X\le Y$ for two covectors $X,Y$  if and only if the (open) cell corresponding to $X$ is contained in the cell corresponding to $Y$. Consequently, the
topes of realizable OMs are the covectors of the inclusion maximal (open) cells
(which all have dimension $d$), called \emph{regions}. Therefore, the
tope graph of a realizable OM can be viewed as the adjacency graph of regions:
the vertices of
this graph are the regions of a hyperplane arrangement and two regions are
adjacent in this graph if they are separated by a unique hyperplane of the
arrangement. The \emph{Topological Representation Theorem of Oriented
Matroids} of
\cite{FoLa}, generalizes this correspondence to all OMs:
tope graphs of OMs can be characterized as the adjacency graphs of maximal (open) cells of pseudohyperplane
arrangements in $\mathbb{R}^{d}$~\cite{BjLVStWhZi}, where $d$ is the rank of
the OM. More precisely, two topes are adjacent if and only if the corresponding
regions are separated by a unique pseudohyperplane, see Figure~\ref{fig:COMrealisable}.
It is also well-known (see for example~\cite{BjLVStWhZi}) that $\covectors$ can be recovered from its
tope graph  $G(\covectors)$ (up to isomorphism). Therefore, \emph{we can define
all terms in the language of tope graphs.} %One instance of this is the correspondence

% It is also well-known (see for example~\cite{BjLVStWhZi}) that tope
% graphs of OMs are partial cubes and that $\covectors$ can be recovered from its
% tope graph  $G(\covectors)$ (up to isomorphism). Therefore, \emph{we can define
% all terms in the language of tope graphs.} %One instance of this is the correspondence between the rank of an OM and the
%VC-dimension of its tope graph, see Lemma~\ref{VCdimOM}.

%One instance of this is the correspondence between the rank
%of an OM and the
%VC-dimension of its tope graph, see Lemma~\ref{VCdimOM}. %Note also that topes
%%of $\covectors$
%are the coatoms of $\mathcal{F}_{\mathrm{big}}(\covectors)$.
%In particular, the covectors correspond to the antipodal subgraphs of the tope
%graph. In particular, the isometric dimension of $G({\mathcal T})$ is $|U|$
%and
%its VC-dimension coincides with the dimension $d$ of the sphere $S^d$ hosting
%a
%representing pseudo-sphere arrangement.
%
% We will describe in Subsection~\ref{subsec:COMs}, how to retrieve a COM from
%its tope graph. This yields, a correspondence between faces of a COM, its
%covectors, and the antipodal subgraphs of its tope graph. Moreover, the
%$\leq$-order on covectors corresponds to the reverse inclusion order on
%antipodal subgraphs.
%%
%  In particular, results from~\cite{Kna-17} allow for several purely graph
%theoretical aximoatizations of COMs. This will specialize straight-forward to
%OMs.

Another important axiomatization of OMs is in terms of
\emph{cocircuits} of $\covectors$. These are the atoms of
$\mathcal{F}_{\mathrm{big}}(\covectors)$. Their collection is denoted by
$\cocircuits$ and axiomatized
%in %an independent manner
as follows:
a system of sign vectors  $(U,\cocircuits)$ is an \emph{oriented
matroid} (OM) if  $\cocircuits$ satisfies (Sym) and the two axioms: %three
%axioms:
\begin{itemize}
	\item [{\bf (Inc)}] ({\sf Incomparability)}
	$\uX\subseteq\uY$
	implies $X=\pm Y$ for all $X,Y \in  \cocircuits$.
	\item[{\bf (E)}] ({\sf Elimination}) for each pair $X,Y\in\cocircuits$ with
	$X\neq -Y$ and for each $e\in \Sep(X,Y)$, there exists $Z \in  \cocircuits$
	such that $Z_e=0$  and $Z_f\in\{0,X_f,Y_f\}$ for all $f\in U$.
	%\item[{\bf (Sym)}] ({\sf Symmetry})  $-\cocircuits=\{ -X: X\in
	%\cocircuits\}=\cocircuits,$ that is, $\cocircuits$ is closed under sign
	%reversal.
\end{itemize}
The set $\covectors$ of covectors can be derived from $\cocircuits$ by taking
the closure of $\cocircuits$ under composition. %The axiomatization of OMs via
%cocircuits is used %typically used in order
%to define uniform oriented matroids.

%Complexes of oriented matroids (COMs) are abstractions of sign vectors of all
%cells in the partition of an open convex set $C$ of ${\mathbb R}^m$ by an
%arrangement of affine hyperplanes.
\medskip
COMs are defined by replacing the global axiom (Sym) with a weaker local axiom:

\begin{definition} [COMs] \label{def:COM}
	A \emph{complex of oriented matroids} (COM) is a system of sign vectors
	$\M=(U,\covectors)$ satisfying (SE)  and the following axiom:
	\begin{itemize}
		\item[{\bf (FS)}] ({\sf Face symmetry}) $X\circ -Y \in  \covectors$
		for all $X,Y \in  \covectors$.
	\end{itemize}
\end{definition}

%As for OMs, we
%We restrict ourselves to \emph{simple} COMs, i.e., COMs defining simple
%systems of sign vectors. %those satisfying (N1) and (N2) from
%%Subsection~\ref{sub:OM}.
One can  see that %(FS) implies (C) and that
OMs are exactly the COMs containing the zero vector ${\bf 0}$ (axiom (Z)),
see~\cite{BaChKn}.
% Moreover, if (C) is true for $\covectors$, then %$X\circ Y=(X\circ
%%Y)\circ(Y\circ X),
% $\underline{X\circ Y}=\underline{Y\circ X}=\uX\cup\uY$ and
%  $S(X\circ Y,Y\circ X)=S(X,Y)$ for any $X,Y\in \covectors$. %Therefore, if
%%(C) holds, (SE) may be substituted by the following axiom from~\cite{BaChKn}:
%\begin{itemize}
%\item[{\bf (SE$^=$)}] for each pair $X,Y\in\covectors$ with
%$\uX=\uY$ and for each $e\in S(X,Y)$ there exists
%$Z\in\covectors$ such that $Z_e=0$ and $Z_f=(X\circ Y)_f$ for all $f\in
%U\setminus S(X,Y)$.
%\end{itemize}
%In particular, all COMs satisfy the axiom (SE$^=$).
%In analogy with realizable OMs,
%A COM $\covectors$  is \emph{realizable} if $\covectors$ is the  system of
%sign vectors of all cells  in an arrangement $U$ of (oriented) affine
%hyperplanes
%restricted to an open convex set of ${\mathbb R}^m$. % If a COM is realizable
%%in $\mathbb{R}^d$, then the VC-dimension of its set of topes is  $d$.
%For other examples of (tope graphs of) COMs, see~\cite{BaChKn,ChKnMa,KnMa}.
The twist between (Sym) and (FS) allows to keep on using the same concepts,
such as topes, tope graphs, the sign-order and the big face (semi)lattice in a
completely analogous way. On the other hand, it leads to a  combinatorial and
geometric structure that is built from OMs as cells but is much richer than
OMs. Let
$\M=(U,\covectors)$ be a COM and $X\in\covectors$ a covector. The
\emph{face} of $X$ is $\face(X):=\{Y\in\covectors: X\leq Y\}$ %sometimes denoted $\mathrm{F}(X)$,
(see~\cite{BaChKn,BjLVStWhZi}) and $\cube(X)$ denotes the smallest cube of $\{-1,+1\}^U$ containing the topes of $\face(X)$.
A \emph{facet} of $\M$ is an inclusion maximal proper face. From the definition, any face $\face(X)$  consists of
the sign vectors of all faces of the subcube of $[-1,+1]^{U}$ with barycenter $X$.
By~\cite[Lemma 4]{BaChKn}, each face $\face(X)$ of a COM $\M$ is isomorphic to an OM, which however is not simple,
because all $Y\in \face(X)$ coincide on $\uX$. Thus, we consider its \emph{simplification} $\M(X)$ obtained by deleting
all the elements of $\uX$. Deletion again gives an OM as is explained in Section~\ref{subsec:del}. \emph{Ample classes}
(called also lopsided~\cite{BaChDrKo,La} or extremal
\cite{BoRa,MoWa}) are exactly the COMs, in which all faces are cubes.
%
% is a COM $(U,\covectors)$ satisfying the %following
% axiom: if $Y \in \{-1,0,+1\}^{U}$ and $X\in \covectors$ with $X\leq Y$, then
%$Y\in\covectors$~\cite{BaChKn}.
Since OMs are COMs, each face of an OM is an OM and the facets correspond to
cocircuits.
Furthermore, by~\cite[Section 11]{BaChKn} replacing each combinatorial face
$\face(X)$ of  $\M$  by a PL-ball, we obtain a contractible cell
complex associated to each COM.
The \emph{topes} $\mathcal{T}$ and the \emph{tope graph} $G(\M)$ of a COM $\M$ are
defined as for OMs. Again, the COM $\M$ can
be recovered from $G(\M)$, see~\cite{BaChKn,KnMa}. For  $X\in \covectors$,
the topes in $\face(X)$ induce a subgraph  of $G(\M)$, which we denote by $[X]$.
We show that $[X]$ is isomorphic to the tope graph $G(\M(X))$ of $\M(X)$
and it is crucial for this paper.

\subsection{Realizable COMs} In this subsection, we recall the geometric illustration
of the axioms in the case of realizable COMs given in the paper~\cite{BaChKn}. %The model of realizable COMs of~\cite{BaChKn}
%generalizes realizability of oriented and affine oriented matroids and of ample sets.
Let  $U$ be an
affine arrangement of hyperplanes of ${\mathbb R}^d$ and $C$ an open convex set. Restrict the arrangement pattern to $C$,
that is, remove all sign vectors which represent the open regions disjoint from $C$. Denote the resulting set of sign vectors
by $\covectors(U,C)$ and call it a {\it realizable COM}. If $U$ is a central arrangement  with $C$ being any open convex set containing the origin,
then $\covectors(U,C)$ coincides with the realizable oriented matroid of $U$. If the arrangement $U$ is affine and $C$ is the entire space, then
$\covectors(U,C)$ coincides with the realizable affine oriented matroid of $U$.
The realizable ample sets arise by taking  the central arrangement $U$ of all coordinate hyperplanes restricted to
an arbitrary open convex set $C$ of ${\mathbb R}^d$ (this model was first considered in~\cite{La}).

We argue, why a realizable COM satisfies the axioms from Definition~\ref{def:COM}.
Let $X$ and $Y$ be sign vectors belonging to $\covectors(U,C)$ and designating  two open regions of $C$ defined by $U$. Let
$x,y$ be two points in these regions. Connect $x,y$ by a line segment and choose
$\epsilon> 0$ so that the open ball of radius $\epsilon$ around $x$ is contained in $C$ and
intersects only those hyperplanes from $U$ containing $x$. Pick any point $w$ from the intersection of this ball with
the open line segment between $x$ and $y$. The corresponding sign vector $W$ is the composition $X\circ Y$, establishing (C).
If we select a point $v$ on the ray from $y$ through  $x$ within the $\epsilon$-ball but beyond $x$, then the
corresponding sign vector $V$ has the opposite signs as $W$ at the coordinates corresponding to the hyperplanes
from $U$ containing $x$ and not including the ray from $y$ through $x$. Hence, $V=X\circ -Y$, yielding (FS).
Now, assume that the hyperplane $e$ from $U$ separates $x$ and $y$, that is, the line segment between $x$ and $y$ crosses $e$ at
some point $z$. The corresponding sign vector $Z$ is then zero at $e$ and equals the composition $X\circ Y$ at all coordinates
where $X$ and $Y$ are sign-consistent, establishing (SE). If the hyperplanes of $U$ have a non-empty intersection in $C$, then
any point $o$ from this intersection corresponds to the zero sign vector, showing that central hyperplane arrangements define OMs.
In this case, $\covectors(U,C)$ coincides with $\covectors(U,{\mathbb R}^d)$ as well as with  $\covectors(U,C_{\epsilon})$,
where $C_{\epsilon}$ is any  open ball centered at $o$.
The face $\face(X)$ of a covector $X\in \covectors(U,C)$ is obtained by taking any point $x\in C$ corresponding to $X$ and a small
$\epsilon$-ball $C_{\epsilon}$ centered at $x$. Then $\face(X)$ coincides with the OM $\covectors(U,C_{\epsilon})$.
Finally, notice that the topes of $\covectors(U,C)$ correspond to the connected components of $C$ minus the hyperplanes of $U$.
Two such topes are adjacent in the tope graph if and only if the corresponding regions are separated by a single hyperplane.
Furthermore, the distance between any two topes in the tope graph of  $\covectors(U,C)$  is equal to the number of hyperplanes separating
the two regions corresponding to these topes (for $C={\mathbb R}^d$ this was proved by  Deligne~\cite[Proposition 1.3]{De}).

\subsection{Deletions and duality}\label{subsec:del}

We continue with deletions in OMs and COMs. %(which can be
%compared  with the similar notions for partial cubes given below).
%We also recall the basic facts about duality in oriented matroids.
Let $\M=(U,\covectors)$ be a COM and $A\subseteq U$. Given a sign vector
$X\in\{\pm 1, 0\}^U$,  by $X\setminus A$ (or by $X_{|U\setminus A}$) we refer to
the \emph{restriction}
of $X$ to $U\setminus A$, that is $X\setminus A\in\{\pm1, 0\}^{U\setminus A}$
with $(X\setminus A)_e=X_e$ for all $e\in U\setminus A$.
The \emph{deletion} of $A$ is defined as $\M\setminus A=(U\setminus
A,\covectors\setminus A)$, where $\covectors\setminus A:=\{X\setminus A:
X\in\covectors\}$.
We often consider the following type of deletion. For a covector $X\in
\covectors$, we denote by $\M(X)=(U\setminus \uX, \face(X)\setminus
\uX)$
the simple OM defined by the face $\face(X)$. Note that $\M(X)=\M\setminus \uX$, since for every $Y\in \covectors$ we have that $Y\setminus\uX=(X\circ Y)\setminus\uX$ and $X\circ Y\in \face(X)$.
%The \emph{contraction} of $A$ is defined as $(E\setminus A,\covectors/ A)$, where
%$\covectors/ A:=\{X\setminus A: X\in\covectors\text{ and }\uX\cap A=\varnothing\}$. If a system of sign vectors arises by
%deletions and contractions from another one it is said to be \emph{minor} of it.
The classes of COMs and OMs %, and AMPs
are closed under deletion, see~\cite[Lemma 1]{BaChKn}.
The cocircuits and the covectors of deletions of OMs are described in the following way:

%We will denote by $G(X)$ the tope graph of $\M(X)$. Notice that $G(X)$ and
%$[X]$ are isomorphic, the single difference is that
%they are defined on different ground sets $U\setminus \uX$ and $U$.

% The \emph{contraction} of $A$ is defined as $\M/ A=(U\setminus A,\covectors/
%A)$, where
% $\covectors/ A:=\{X\setminus A: X\in\covectors\text{ and }\uX\cap
%A=\varnothing\}$. If $\M'=(U',\covectors')$ arises by
% deletions and contractions from $\M=(U,\covectors)$, then we say that $\M'$
%is a  \emph{minor} of  $\covectors$. %Deletion in a COM translates to
%%pc-contraction in its tope graph,
% %while contraction corresponds to what is called taking the zone graph,
%%see~\cite{KnMa}.
% For OMs the following result is classical, see~\cite{BjLVStWhZi}:
%
% \begin{lemma}\label{lem:minorclosed} ~\cite[Lemma 1]{BaChKn} The class of
%COMs  is closed under taking  minors.
% %If $(U,\covectors)$ is a COM and $A\subseteq U$, then $(U\setminus
%%A,\covectors\setminus A)$ and
% %$(U\setminus A,\covectors/ A)$ are COMs as well.
% \end{lemma}

%The cocircuits and the covectors of deletions of OMs are described in the
%following way:

\begin{lemma}\label{lem:deletecocircuits}~\cite{BjLVStWhZi}
 Let $\M=(U,\covectors)$ be an $\OM$ with the set of  cocircuits $\cocircuits$
 and $A\subseteq U$. Then the cocircuits of $\M\setminus A$ are
the minimal elements of $\cocircuits\setminus A$ and the covectors of $\M\setminus A$ are
$\covectors\setminus A$.
\end{lemma}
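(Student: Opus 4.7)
The plan is to prove the two assertions in order, using two facts already available from the excerpt: that $\M\setminus A$ is again an OM (cited from~\cite{BaChKn}), and that $\covectors$ is the closure of $\cocircuits$ under composition (stated just before the lemma).

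For the covector statement, the key observation is that restriction to $U\setminus A$ is a homomorphism for composition, namely $(X\circ Y)\setminus A=(X\setminus A)\circ (Y\setminus A)$, which is immediate from the pointwise definition of $\circ$. Hence $\covectors\setminus A$ is closed under composition; it is also closed under sign reversal, and inherits (SE) directly since any $e\in \Sep(X\setminus A,Y\setminus A)$ is also in $\Sep(X,Y)$ and the elimination witness $Z\in\covectors$ restricts to a witness in $\covectors\setminus A$. Together with the definition $\M\setminus A=(U\setminus A,\covectors\setminus A)$, this identifies $\covectors\setminus A$ as the set of covectors of the deletion.

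For the cocircuit statement, recall that in any OM the cocircuits are exactly the atoms of the big face lattice, i.e., the minimal nonzero elements of the covector poset. So the task is to show that the minimal nonzero elements of $\covectors\setminus A$ coincide with the minimal elements of $\cocircuits\setminus A$. Both directions use the same trick based on cocircuit decompositions. Given any $Y=X\setminus A\in \covectors\setminus A$, write $X=C_1\circ\cdots\circ C_k$ with $C_i\in \cocircuits$, and restrict to get $Y=(C_1\setminus A)\circ\cdots\circ (C_k\setminus A)$; the first nonzero factor $C_i\setminus A$ then satisfies $C_i\setminus A\le Y$ under the product order. For the forward direction, if $Y$ is minimal and nonzero in $\covectors\setminus A$, this forces $Y=C_i\setminus A$, and any $C'\in\cocircuits$ with $0\neq C'\setminus A\le Y$ similarly satisfies $C'\setminus A=Y$ by the same minimality, so $Y$ is minimal in $\cocircuits\setminus A$. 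For the reverse direction, if $Y=C\setminus A$ is minimal in $\cocircuits\setminus A$ and $0<Z\le Y$ in $\covectors\setminus A$, decomposing $Z$ the same way yields a nonzero $D\setminus A\in \cocircuits\setminus A$ with $D\setminus A\le Z\le C\setminus A$, and minimality of $C\setminus A$ forces $D\setminus A=C\setminus A=Z=Y$.

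There is no genuine obstacle here; the argument is entirely mechanical once one exploits the composition generation of covectors from cocircuits. The only subtlety worth noting is the sign convention $0\le \pm 1$, which is exactly what guarantees that the first nonzero term of any composition sits below the whole composition in the product order, and this is precisely the fact that powers the minimality arguments in both directions.
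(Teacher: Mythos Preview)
The paper does not supply its own proof of this lemma; it is quoted from~\cite{BjLVStWhZi} without argument. Your proof is correct and is essentially the standard one found there (cf.\ \cite[Proposition~3.3.2]{BjLVStWhZi}): decompose an arbitrary covector into a composition of cocircuits, restrict, and use that the leading nonzero restricted cocircuit lies below the whole composition in the product order to pin down the minimal nonzero elements of $\covectors\setminus A$.

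One small remark on exposition: in this paper the deletion is \emph{defined} as $\M\setminus A=(U\setminus A,\covectors\setminus A)$, so the assertion that the covectors of $\M\setminus A$ are $\covectors\setminus A$ is tautological here, and the closure of COMs and OMs under deletion is already cited from~\cite{BaChKn}. Your verification of (Sym), (C), (SE) for $\covectors\setminus A$ is therefore redundant in this context (though not incorrect). The substantive content of the lemma is the cocircuit statement, and your two-direction minimality argument handles it cleanly. You may also want to note explicitly that ``minimal elements of $\cocircuits\setminus A$'' should be read as minimal \emph{nonzero} elements, since a cocircuit with support contained in $A$ restricts to $\mathbf{0}$; your proof already treats this correctly.
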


%Recall also the following result:
%
%\begin{lemma}\label{lem:minorcommute}~\cite[Lemma 2]{BaChKn}
%If $(E,\covectors)$ is a system of sign vectors and $A, B\subseteq E$ with $A\cap B=\varnothing$, then
%$(E\setminus (A\cup B),(\covectors\setminus A)/B)=(E\setminus (A\cup B),(\covectors/B)\setminus A)$.
%\end{lemma}

We briefly recall the duality  of OMs, see~\cite[Section 3.4]{BjLVStWhZi}. The
duality is defined via orthogonality of
circuits and cocircuits, which can be viewed as a synthetic version of
classical orthogonality of vectors.
Two sign-vectors $X,Y\in\{\pm 1,0\}^U$ are \emph{orthogonal}, denoted $X\bot
Y$, if either $\uX\cap\uY=\varnothing$ or there are
$e,f\in \uX\cap\uY$ such that $X_eY_e=-X_fY_f$. %We defined
%oriented matroids via covectors and cocircuits. However,
Oriented matroids can be defined in terms of their \emph{vectors} $\mathcal V$ and
\emph{circuits} $\mathcal C$, which can be derived from the cocircuits
$\circuits^*$ using the following result:

\begin{theorem}\label{lem:orthogonality}\cite[Theorem 3.4.3 and Proposition
3.7.12]{BjLVStWhZi}
	Let $\M$ be an OM. The set $\mathcal V$   consists of all $Y\in \{ \pm 1,0\}^U$ such that $Y\bot X$ for any $X\in \circuits^*$ and
	$\mathcal C$ consists of the minimal members of $\mathcal V \setminus
	\{\textbf{0}\}$. %Dually, the set $\mathcal L$   consists of all $Y\in \{ \pm 1,0\}^U \setminus
	%\{\textbf{0}\}$ such that $Y\bot X$ for any $X\in \circuits$.
\end{theorem}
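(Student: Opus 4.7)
The plan is to establish the theorem as a two-step duality statement: first prove the biorthogonality of $\mathcal V$ and $\cocircuits$, and then prove the converse, namely that being orthogonal to every cocircuit is already sufficient to be a vector. The characterization of $\mathcal C$ as the minimal nonzero members of $\mathcal V \setminus \{\mathbf{0}\}$ will then be immediate from the definition of circuits.

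For the forward inclusion, I would show that every $Y\in\mathcal V$ satisfies $Y\bot X$ for all $X\in\cocircuits$. Since vectors are the compositions of circuits, it suffices to prove $C\bot X$ for every circuit $C$ and every cocircuit $X$, and then observe that orthogonality is preserved under composition in the first slot (if $Y_1,Y_2\bot X$ and neither forces a conflict, then $Y_1\circ Y_2\bot X$). The biorthogonality of a single circuit and a single cocircuit is the heart of the matter; I would approach it by induction on $|\underline{C}\cap\underline{X}|$ using strong elimination applied in the covector axiomatization $(\covectors,\circ,\text{SE})$ dualized to the circuit side. The idea is: if $\underline{C}\cap\underline{X}=\{e\}$ then after eliminating on $e$ we could shrink the cocircuit support strictly below $\underline{X}$, contradicting the incomparability axiom (Inc); and if $\underline{C}\cap\underline{X}\ge 2$, then any sign pattern failing orthogonality would permit elimination on one of the conflicting coordinates, again producing a smaller cocircuit supported strictly inside $\underline{X}$.

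For the converse, let $\mathcal V':=\{Y\in\{\pm1,0\}^U: Y\bot X\ \text{for all}\ X\in\cocircuits\}$. I would argue that $\mathcal V'\subseteq \mathcal V$ by showing that (i) $\mathcal V'$ is closed under sign reversal and composition, and (ii) its minimal nonzero members have supports that are circuits of the underlying matroid $\underline{\M}$. For (i), closure under $-$ is immediate and closure under $\circ$ follows because a violation of orthogonality in $Y_1\circ Y_2$ against some $X\in\cocircuits$ forces a violation already in $Y_1$ or $Y_2$. For (ii), I would use the matroid-theoretic fact that the supports of cocircuits of $\M$ are the cocircuits of $\underline{\M}$, so the minimal supports in $\mathcal V'$ are exactly the complements-of-hyperplanes, i.e., the matroid circuits of $\underline{\M}$; a minimal nonzero $Y\in\mathcal V'$ is then determined up to sign by the strong elimination / incomparability argument, matching a unique $\pm C\in\circuits$. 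Composition then recovers all of $\mathcal V$.

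The main obstacle is the biorthogonality lemma $C\bot X$ for circuit $C$ and cocircuit $X$: it is the one place where the fine structure of (SE) interacts nontrivially with (Inc), and it cannot be bypassed by a purely formal argument on supports. Once that lemma is in hand, both inclusions and the minimality statement for $\mathcal C$ follow by routine closure and minimization arguments.
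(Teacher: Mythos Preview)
The paper does not give its own proof of this statement: it is stated as a citation of \cite[Theorem 3.4.3 and Proposition 3.7.12]{BjLVStWhZi} and used as a black box. There is therefore nothing in the paper to compare your proposal against.

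That said, your outline is broadly the standard route taken in the cited reference, so in that sense you are on the right track. A couple of points deserve more care. First, your base case ``$|\underline{C}\cap\underline{X}|=1$'' is not handled by an elimination argument as you suggest; rather, one invokes the unoriented fact that a circuit and a cocircuit of the underlying matroid $\underline{\M}$ can never meet in exactly one element, which immediately rules this case out. Second, the assertion that orthogonality is preserved under composition in the first argument (if $Y_1\bot X$ and $Y_2\bot X$ then $Y_1\circ Y_2\bot X$) is true but not entirely trivial: a witness pair $e,f$ for $Y_1\bot X$ may have its sign at $f$ overwritten when forming $Y_1\circ Y_2$ if $(Y_1)_f=0$; one has to chase witnesses between $Y_1$ and $Y_2$ to produce a valid pair for the composition. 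Finally, in the converse direction your step (ii) tacitly uses that a minimal $Y\in\mathcal V'$ with circuit support is \emph{uniquely} signed (up to global negation); this is where one typically applies circuit elimination to $Y$ and a genuine signed circuit $C$ with $\underline{C}=\underline{Y}$ to force $Y=\pm C$. With these refinements your plan matches the standard proof.
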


%\begin{theorem}\label{lem:orthogonality}\cite[Theorem 3.4.3]{BjLVStWhZi}
%Let $\circuits^*$ be the set of cocircuits of an %oriented matroid
%OM $\M$. The set $\circuits$ of circuits of $\M$  consists of all minimal %sign
%%vectors
%$Y\in \{ \pm 1,0\}^U\setminus\{\textbf{0}\}$ such that $Y\bot X$ %holds
%for any $X\in \circuits^*$.
%\end{theorem}

%By~\cite[Proposition 3.7.12]{BjLVStWhZi}, $X\bot Y$ for any covector $X\in
%\covectors$ and any circuit $Y\in\circuits$.

We will also make use of the version of Lemma~\ref{lem:deletecocircuits} for circuits:

\begin{lemma}\label{lem:deletecircuits}~\cite{BjLVStWhZi} %\cite[Proposition 3.3.1]{BjLVStWhZi}
 Let $\M=(U,\covectors)$ be an $\OM$ with the set of  circuits $\circuits$
 and $A\subseteq U$. Then the circuits of $\M\setminus A$ are
$X\in \circuits$ such that $\uX\cap A=\varnothing$.
\end{lemma}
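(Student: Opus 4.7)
The plan is to combine the orthogonality-based duality of Theorem~\ref{lem:orthogonality}, which characterizes the vectors (and hence circuits) of an OM via orthogonality to cocircuits (equivalently, to all covectors), with the covector part of Lemma~\ref{lem:deletecocircuits}, which identifies the covectors of $\M\setminus A$ with $\covectors\setminus A$. The central bridge is the following equivalence: for any sign vector $\tilde W\in\{\pm 1,0\}^U$ with $\underline{\tilde W}\cap A=\varnothing$, $\tilde W$ is a vector of $\M$ if and only if the restriction $W:=\tilde W\setminus A$ is a vector of $\M\setminus A$.

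First, I would prove this bridge. Recall that a sign vector is a vector of an OM iff it is orthogonal to every cocircuit (Theorem~\ref{lem:orthogonality}), equivalently to every covector, since covectors are compositions of cocircuits. Hence $\tilde W$ is a vector of $\M$ iff $\tilde W\perp X$ for all $X\in\covectors$, while $W$ is a vector of $\M\setminus A$ iff $W\perp Y$ for every covector $Y$ of $\M\setminus A$, i.e.\ for every $Y=X\setminus A$ with $X\in\covectors$ by Lemma~\ref{lem:deletecocircuits}. Because $\tilde W$ vanishes on $A$, one has $\underline{\tilde W}\cap \underline X=\underline W\cap\underline{X\setminus A}$ as subsets of $U\setminus A$, and $\tilde W,X$ agree with $W,X\setminus A$ on this common set. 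Thus $\tilde W\perp X\Longleftrightarrow W\perp (X\setminus A)$, which yields the equivalence.

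With the bridge in hand, both inclusions of the lemma follow. For $\supseteq$: if $X\in\circuits$ with $\uX\cap A=\varnothing$, the bridge gives that $X$, viewed on $U\setminus A$, is a vector of $\M\setminus A$. If some nonzero vector $W$ of $\M\setminus A$ satisfies $W\le X$, extending $W$ by zero on $A$ produces a nonzero vector $\tilde W$ of $\M$ with $\tilde W\le X$; by the minimality of the circuit $X$ in $\mathcal V\setminus\{\mathbf 0\}$, $\tilde W=X$ and so $W=X$, establishing minimality. For $\subseteq$: if $W$ is a circuit of $\M\setminus A$, the bridge makes the zero-extension $\tilde W$ a nonzero vector of $\M$, so by the standard conformal property of vectors there is some circuit $X\in\circuits$ with $X\le \tilde W$; then $\uX\subseteq\underline{\tilde W}\subseteq U\setminus A$, and by the already established $\supseteq$ direction, $X$ is a circuit of $\M\setminus A$ with $X\le W$, forcing $X=W$ by minimality of $W$.

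The main obstacle is the bridge, and specifically the subtle point that cocircuits of $\M\setminus A$ are only the \emph{minimal} elements of $\cocircuits\setminus A$, not all of $\cocircuits\setminus A$; working only with cocircuits of $\M\setminus A$ would miss the non-minimal restrictions of cocircuits of $\M$ and prevent the clean pullback of orthogonality. Passing through the covector version of Lemma~\ref{lem:deletecocircuits} — and using that vectors are orthogonal to all covectors, not merely to cocircuits — is what makes the argument go through.
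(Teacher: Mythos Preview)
The paper does not give a proof of this lemma at all; it is simply quoted from~\cite{BjLVStWhZi} as a standard fact about oriented matroids. So there is nothing to compare against.

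Your argument is essentially correct and is the natural duality proof. Two small points of precision are worth noting. First, the parenthetical ``equivalently to every covector, since covectors are compositions of cocircuits'' is true as a statement but not for the reason you give: orthogonality is not preserved under composition in general. What you actually need (and what Theorem~\ref{lem:orthogonality} provides, via Proposition~3.7.12 of~\cite{BjLVStWhZi}) is the duality theorem that vectors and covectors are mutually orthogonal; the equivalence then follows. Second, in the minimality checks you phrase things in terms of the order $\le$, whereas circuits are the \emph{support}-minimal nonzero vectors. This is harmless here because the two notions coincide on the relevant comparisons (if $\underline{\tilde W}\subseteq\uX$ for a circuit $X$ then $\underline{\tilde W}=\uX$, and together with $\tilde W\le X$ this forces $\tilde W=X$), but it would be cleaner to argue directly with supports. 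The appeal to the conformal circuit decomposition (every nonzero vector dominates a circuit in the $\le$ order) is standard~\cite[Proposition~3.7.2]{BjLVStWhZi}, though not stated in the paper.
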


\begin{remark}
 Throughout the paper we will use letters like $S, S'$ for samples, $T, T'$ for topes, and $X,Y,Z$ for cocircuits, covectors, and circuits.
\end{remark}

\subsection{Partial cubes and pc-minors} It is well-known, see for example~\cite{BjLVStWhZi,BaChKn}, that tope graphs of OMs and COMs are
partial cubes, which we introduce now. %Deletions also have their analogues in
%the language of partial cubes.
%
Let $G=(V,E)$ be a finite, connected,
 simple graph. The {\it distance} $d(u,v):=d_G(u,v)$ between vertices $u$ and
$v$ is the length of a shortest $(u,v)$-path, and the {\it interval}
$I(u,v):=\{ x\in V: d(u,x)+d(x,v)=d(u,v)\}$
%between $u$ and $v$
consists of all vertices on shortest $(u,v)$-paths. A subgraph $H$
 is {\it convex} if $I(u,v)\subseteq H$ for any $u,v\in H$ and {\it
 gated}~\cite{DrSch} if for every vertex $x\notin H$ there
exists a vertex $x'$ (the {\it gate} of $x$) in $H$ such that $x'\in I(x,y)$
for  each vertex $y$
of $H$. % is connected with $x$ by a shortest path passing through the gate
%$x'$.
It is easy to see that gates are unique %if $x$ has a gate in $H$, then it is
%unique
and that gated sets are convex. An induced subgraph $H$ of $G$ is
{\it isometric} if the distance between vertices in $H$ is
the same as that in $G$.
A graph $G=(V,E)$ is {\it isometrically embeddable} into a graph $H=(W,F)$ if
there exists $\varphi : V\rightarrow W$ such that $d_H(\varphi
(u),\varphi (v))=d_G(u,v)$ for all $u,v\in V$. %, i.e., $\varphi(G)$ is an
%isometric subgraph of $H$.
A graph $G$ is a {\it partial cube} if it
admits an isometric embedding into a hypercube $\Q_m=\Q(U)$. For an edge $uv$ of
$G$, let $W(u,v)=\{ x\in V: d(x,u)<d(x,v)\}$. For an edge $uv$,
the sets $W(u,v)$ and $W(v,u)$ are called {\it complementary halfspaces} of $G$.

\medskip
\begin{theorem}~\cite{Dj} \label{Djokovic}
	A graph $G$ is a partial cube if and only if $G$ is bipartite and for any
	edge $uv$ the sets $W(u,v)$ and $W(v,u)$ are convex.
\end{theorem}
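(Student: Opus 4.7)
The plan is to prove the two implications of Djokovi\'c's theorem separately.

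For the forward direction, suppose $G$ isometrically embeds into $\Q(U)$ via $\varphi$. Since $\Q(U)$ is bipartite (using parity of the number of $+1$ coordinates), so is its isometric subgraph $G$. Fix an edge $uv$ of $G$: by isometricity, $\varphi(u)$ and $\varphi(v)$ differ in exactly one coordinate, say $e \in U$, and the halfspace $W(u,v)$ consists precisely of those $x \in V$ with $\varphi(x)_e = \varphi(u)_e$. For $x, y \in W(u,v)$ and $z \in I(x,y)$, any shortest $(x,y)$-path in $G$ has length equal to the Hamming distance between $\varphi(x)$ and $\varphi(y)$, and since $\varphi(x)_e = \varphi(y)_e$ such a path cannot flip coordinate $e$. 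Hence $\varphi(z)_e = \varphi(u)_e$, giving $z \in W(u,v)$.

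For the converse, assume $G$ is bipartite and all halfspaces $W(u,v)$ are convex. I would introduce the Djokovi\'c--Winkler relation $\Theta$ on $E(G)$ by declaring $uv \,\Theta\, u'v'$ iff $\{W(u,v), W(v,u)\} = \{W(u',v'), W(v',u')\}$. This is trivially an equivalence relation; denote its classes by $F_1,\ldots,F_m$ with associated cuts $\{W_i^+, W_i^-\}$ of $V$. Define $\varphi : V \to \{-1,+1\}^m$ by $\varphi(x)_i = +1$ if $x \in W_i^+$ and $-1$ otherwise. Since every edge of $G$ lies in exactly one $\Theta$-class, adjacent vertices in $G$ differ in exactly one coordinate of $\varphi$, so $\varphi$ maps edges of $G$ to edges of $\Q_m$.

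To upgrade this to an isometric embedding, the central claim is that no shortest path in $G$ uses two edges from the same $\Theta$-class. Suppose, for contradiction, that some shortest $(x,y)$-path $z_0, z_1,\ldots,z_\ell$ contains two edges $z_j z_{j+1}$ and $z_k z_{k+1}$ from the same $F_i$; choose such a pair so that no $F_i$-edge lies strictly between them. Then the vertices $z_{j+1},\ldots,z_k$ all lie on one side of the cut, say $W_i^-$, while $z_j$ and $z_{k+1}$ lie in $W_i^+$. The subpath from $z_j$ to $z_{k+1}$ is still a shortest path, yet its interior vertices escape $W_i^+$, contradicting convexity of $W_i^+$. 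It follows that $d_G(x,y) \geq |\{i : \varphi(x)_i \neq \varphi(y)_i\}|$, and the reverse inequality is clear since every edge along any $(x,y)$-path changes exactly one coordinate. Hence $\varphi$ is an isometric embedding of $G$ into $\Q_m$.

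The principal obstacle is the single-crossing argument in the last step: it relies essentially on convexity of \emph{both} halfspaces of each cut, since one invokes convexity of whichever side contains the endpoints of the escaping subpath. Bipartiteness plays a supporting role, ensuring that $\{W(u,v), W(v,u)\}$ really is a bipartition of $V$ for every edge, so that the $\Theta$-classes yield well-defined coordinate directions.
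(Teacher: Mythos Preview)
The paper does not prove Theorem~\ref{Djokovic}; it merely states it and cites Djokovi\'c's original paper. So there is no ``paper's proof'' to compare against, and your argument must stand on its own.

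Your forward direction is fine. In the converse direction there is a genuine gap. You define $\Theta$ by declaring $uv\,\Theta\,u'v'$ when $\{W(u,v),W(v,u)\}=\{W(u',v'),W(v',u')\}$, which indeed makes $\Theta$ trivially an equivalence relation. But then you assert: ``Since every edge of $G$ lies in exactly one $\Theta$-class, adjacent vertices in $G$ differ in exactly one coordinate of $\varphi$.'' This does not follow. What you need is that an edge $xy$ can cross \emph{only its own} cut: if $x\in W(u,v)$ and $y\in W(v,u)$ for some edge $uv$, then $\{W(x,y),W(y,x)\}=\{W(u,v),W(v,u)\}$, i.e.\ $xy\,\Theta\,uv$. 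Without this, $\varphi(x)$ and $\varphi(y)$ might differ in several coordinates, and both the ``edges map to edges'' claim and the single-crossing argument (where you assert that consecutive vertices between two $F_i$-edges stay on one side of the $F_i$-cut) break down.

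The missing lemma is short but essential, and it is exactly where convexity enters at the level of single edges. Here is the argument: suppose $x\in W(u,v)$, $y\in W(v,u)$, and take $z\in W(u,v)$ with $z\in W(y,x)$. Then $d(z,y)+1=d(z,x)$, so $y\in I(z,x)$; but $z,x\in W(u,v)$ and convexity forces $y\in W(u,v)$, a contradiction. Hence $W(u,v)\subseteq W(x,y)$, and symmetry (one checks $u\in W(x,y)$, $v\in W(y,x)$ by a parity count) gives equality. Once you insert this, the rest of your proof goes through. Note incidentally that with the gap filled, your central claim already yields $d_G(x,y)=|\{i:\varphi(x)_i\neq\varphi(y)_i\}|$ directly, so the two-inequality split is unnecessary.
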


Djokovi\'{c}~\cite{Dj} introduced the following binary relation $\Theta$  on
the edges of $G$:  for two edges $e=uv$ and $e'=u'v'$, we set $e\Theta e'$ if
$u'\in W(u,v)$ and $v'\in W(v,u)$. If $G$ is a partial cube, then
% Under the conditions of Theorem~\ref{Djokovic}, $e\Theta e'$ if and only if
%$W(u,v)=W(u',v')$ and $W(v,u)=W(v',u')$,
%i.e.
 $\Theta$ is an equivalence relation. Each $\Theta$-class $E_e$ corresponds to a coordinate $e\in U$ of the hypercube $\Q(U)$
 into which $G$ is isometrically embedded. % be a $\Theta$-class.
%, with $\Theta$-classe. Let $E_1,\ldots,E_m$ be the
% equivalence classes of $\Theta$. %and let $b$ be an arbitrary vertex taken as
%%the basepoint of $G$.
Let $\{ G^-_e,G^+_e\}$ be the
complementary halfspaces of $G$ defined by setting $G^-_e:=G(W(u,v))$
and $G^+_e:=G(W(v,u))$ for an arbitrary edge $uv\in E_e$ (for $S\subseteq V(G)$ we denote by $G(S)$
the subgraph of $G$ induced by $S$).  %such that $b\in
%G^-_e$. %The isometric embedding $\varphi$ of $G$ into the $m$-dimensional
%%hypercube
%$\Q_m$ is obtained by setting $\varphi(v):=\{ e: v\in G^+_e\}$ for any vertex
%$v\in V$.
An {\it elementary pc-restriction} consists of taking one of the
halfspaces $G^-_e$ and $G^+_e$. A {\it pc-restriction} is a convex subgraph of $G$
induced by the
intersection of a set of halfspaces of $G$. %Such an intersection is a convex
%subgraph of $G$, thus a partial cube.
Since any convex subgraph of a partial cube $G$ is the intersection of
halfspaces~\cite{AlKn,Ba,Ch_thesis}, the pc-restrictions of
$G$ coincide with the convex subgraphs of $G$. Denote by $\pi_e(G)$ an
\emph{elementary pc-contraction}, i.e., the graph obtained from
$G$ by contracting the edges in $E_e$. For a vertex $v$ of $G$, let $\pi_e(v)$
be
the image of $v$ under the contraction. We apply $\pi_e$ to subsets $S\subseteq
V$, by setting $\pi_e(S):=\{\pi_e(v): v\in S\}$.
By~\cite[Theorem 3]{Ch_hamming}, %$\pi_e(G)$ is an isometric subgraph of
%$\Q_{m-1}$, thus
the class of partial cubes is closed under pc-contractions. Since pc-contractions
commute,  %$\pi_e(\pi_f(G))=\pi_f(\pi_e(G))$ for any two $\Theta$-classes
%$E_e,E_f$.
%are two distinct $\Theta$-classes, then $\pi_e(\pi_f(G))=\pi_f(\pi_e(G))$.
%Hence,
for a set $A$ of $\Theta$-classes, we denote
by $\pi_A(G)$ the isometric subgraph of $\Q(U\setminus A)$ obtained from $G$ by
contracting the equivalence classes of edges from $A$.
pc-Contractions and pc-restrictions also commute in partial cubes. A {\it pc-minor}
of $G$ is a
partial cube obtained from $G$ by pc-restrictions and pc-contractions. %,
%independently on the order they are performed in. %the restrictions and
%%contractions.
%The resulting partial cube is called a {\it partial cube minor} (or {\it
%pc-minor}) of $G$. %For a partial cube $H$,  let
%${\mathcal F}(H)$ denote  the class of all partial cubes
%not having $H$ as a pc-minor.
A deletion $\M\setminus A$ in a COM $\M$  translates to the contraction of the $\Theta$-classes
$E_e$ with $e\in A$  in its tope
graph $G(\M)$.   Since tope graphs of COMs and OMs are partial cubes, we can  describe pc-restrictions
and pc-contractions %previously introduced operations
on sign-vectors in terms of partial cubes. %Deletion in a COM or OM translates
%to pc-contraction in its tope graph:
%while contraction corresponds to what is called taking the zone graph,
%see~\cite{KnMa}.
First recall the following fundamental lemma from~\cite{BaChKn} and ~\cite{KnMa}:

\begin{lemma} \label{lem:face-gated}
For each covector $X$ of a COM $\M$, %the tope graph
$[X]$ %of $\M(X)$
is a gated subgraph of the tope graph $G(\M)$ of $\M$. Moreover, for any tope $T$ of $\M$,  $X\circ T$  is the gate of $T$  in $[X]$
and in $\cube(X)$.
\end{lemma}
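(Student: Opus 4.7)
The plan is to show first that $X\circ T$ is a tope of $\M$ lying in $\face(X)$, and then to verify the gate property via a direct separator computation, using that $G(\M)$ is isometrically embedded in $\Q(U)$.

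For membership, since $T$ is a tope, $X\circ T$ has full support, so once $X\circ T\in\covectors$ it is automatically a tope of $\M$; and the inequality $X\le X\circ T$ is immediate from the definition of composition, placing $X\circ T$ in $\face(X)$. To derive $X\circ T\in\covectors$ I would apply (FS) twice: first with $Y=T$ to get $X\circ -T\in\covectors$, and then with $Y=X\circ -T$ to get $X\circ -(X\circ -T)\in\covectors$. A coordinate-wise check shows $X\circ -(X\circ -T)=X\circ T$: for $e$ with $X_e\neq 0$ both sides equal $X_e$, while for $e$ with $X_e=0$ both sides equal $T_e$. Hence $X\circ T\in\covectors$ is a tope in $[X]$ using only (FS).

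For the gate property in $[X]$, graph distance in $G(\M)$ equals Hamming distance in $\Q(U)$ since $G(\M)$ is a partial cube. For any tope $T'\in[X]$ one has $T'_e=X_e$ for every $e\in\underline{X}$, and direct computation yields
\[
\Sep(T,X\circ T)=\{e\in\underline{X}:T_e\neq X_e\},\qquad \Sep(X\circ T,T')=\{e\in U\setminus\underline{X}:T_e\neq T'_e\},
\]
which are disjoint with union $\Sep(T,T')$. Hence $d(T,T')=d(T,X\circ T)+d(X\circ T,T')$ in $\Q(U)$ and therefore also in $G(\M)$, so $X\circ T$ lies in every interval $I_{G(\M)}(T,T')$ with $T'\in[X]$, proving that it is the gate of $T$ in $[X]$.

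For the gate in $\cube(X)$, I would invoke that $\face(X)$ simplifies to a simple OM on $U\setminus\underline{X}$ (see~\cite[Lemma~4]{BaChKn}), so every coordinate outside $\underline{X}$ is realized by topes of $\face(X)$ with both signs. Consequently, the smallest subcube of $\Q(U)$ containing these topes is $\{Y\in\{\pm 1\}^U:Y_e=X_e\text{ for all }e\in\underline{X}\}$, and the Hamming gate of $T$ into this coordinate-fixing subcube is exactly the vertex agreeing with $T$ outside $\underline{X}$ and with $X$ on $\underline{X}$, i.e.\ $X\circ T$. The main conceptual step is the identity $X\circ T=X\circ -(X\circ -T)$, which reduces the membership $X\circ T\in\covectors$ to a double application of (FS); everything else is a short computation that uses the partial cube structure of $G(\M)$ and the simplicity of $\face(X)$ after deletion of $\underline{X}$.
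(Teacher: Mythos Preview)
The paper does not give its own proof of this lemma; it simply cites it as a ``fundamental lemma from~\cite{BaChKn} and~\cite{KnMa}''. Your proposal therefore supplies what the paper omits, and it is correct. The double application of (FS) to obtain $X\circ T=X\circ -(X\circ -T)\in\covectors$ is precisely the standard derivation of (C) from (FS), and the separator calculation is sound because tope graphs of COMs are partial cubes, so graph distance coincides with Hamming distance. The identification of $\cube(X)$ with the subcube fixing the coordinates in $\underline{X}$ is justified exactly as you say, via the simplicity of $\M(X)$ stated in the paper. Nothing is missing.
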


Let $G$ be an isometric subgraph of the hypercube $Q(U)$ and $H$ be an isometric subgraph of the hypercube $Q(U\setminus A)$ for some $A\subseteq U$. We say that $G$ and $H$ are
\emph{$U$-isomorphic} if there exists an isomorphism between $G$ and $H$ which maps each edge of a $\Theta$-class $E_e$ of $G$ to an edge of $E_e$ of $H$.

\begin{lemma} \label{minor-pc-minor}  Let $\M=(U,\covectors)$ be a COM and
$A\subseteq U$. Then $\pi_A(G(\M))$ is the tope graph of $\M\setminus A$. If $X\in \covectors$,
then the tope graph $[X]$ of $(U,F(X))$ is $U$-isomorphic to the tope graph $G(\M(X))=\pi_{\uX}(G(\M))$
of $\M(X)=\M\setminus \uX$. % and the isomorphism between $[X]$ and $G(\M(X))$ preserves their $\Theta$-classes (i.e., the elements of $U$).
 %the subgraph of $G$ induced by the topes of $\uparr X$.
\end{lemma}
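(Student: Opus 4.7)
The plan is to prove both parts from a single underlying principle: an isometric subgraph of a hypercube is determined by its vertex set, because in such a subgraph $G$ we have $d_G(u,v)=d_{Q(U)}(u,v)$, so two vertices are adjacent in $G$ iff they are at Hamming distance~$1$. Consequently, any two isometric subgraphs of $Q(U)$ with the same vertex set coincide as labeled subgraphs (with the same $\Theta$-class partition).

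For the first statement, I would identify both $\pi_A(G(\M))$ and $G(\M\setminus A)$ as isometric subgraphs of $Q(U\setminus A)$ with the same vertex set. The vertex set of $\pi_A(G(\M))$ is $\{T\setminus A : T\in \mathcal{T}\}$ by definition of the contraction; I claim this equals the set of topes of $\M\setminus A$. One inclusion is immediate. For the converse, if $Z\in \covectors\setminus A$ is a tope of $\M\setminus A$ (a $\pm 1$-vector on $U\setminus A$), write $Z=X\setminus A$ for some $X\in\covectors$; choose any tope $T$ of $\M$ and observe that $X\circ T\in\covectors$ by composition, is a tope of $\M$ (it has full support on $U$), and restricts to $X\setminus A=Z$. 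That $\pi_A(G(\M))$ is an isometric subgraph of $Q(U\setminus A)$ is the cited closure of the class of partial cubes under pc-contractions, while $G(\M\setminus A)$ is isometric in $Q(U\setminus A)$ because $\M\setminus A$ is a COM and tope graphs of COMs are partial cubes. Hence the two graphs coincide, together with their $\Theta$-class labels.

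For the second statement, I first record that $\M(X)=\M\setminus\uX$: for every $Y\in\covectors$, the composition $X\circ Y$ lies in $\face(X)$ and satisfies $(X\circ Y)\setminus\uX = Y\setminus\uX$, so $\face(X)\setminus\uX=\covectors\setminus\uX$. I then define
\[
\phi:[X]\to \pi_{\uX}(G(\M)),\qquad \phi(T):=T\setminus\uX,
\]
and show it is a $U$-isomorphism. It is well-defined, since topes of $[X]$ agree with $X$ on $\uX$ and restrict to topes of $\M(X)$. It is injective because two topes in $[X]$ that agree on $U\setminus\uX$ already agree on $\uX$, hence coincide; surjectivity uses the same $X\circ T$ trick as in the first part. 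For edges, Lemma~\ref{lem:face-gated} gives that $[X]$ is gated, hence convex, hence isometric in $G(\M)$ and therefore in $Q(U)$; so the edges of $[X]$ are exactly the pairs at Hamming distance~$1$, all lying in $\Theta$-classes $E_f$ with $f\notin\uX$ (since all vertices agree with $X$ on $\uX$). The restriction map $\phi$ is a Hamming isometry on $V([X])$ onto $V(\pi_{\uX}(G(\M)))$, and by the same characterization of edges in isometric subgraphs of hypercubes, $\phi$ sends edges of $E_f$ in $[X]$ exactly to edges of $E_f$ in $\pi_{\uX}(G(\M))$.

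The only mild obstacle is the realization that topes of $\M\setminus A$ really are the restrictions of topes of $\M$; this is where the COM axioms intervene, through the composition $X\circ T$, guaranteeing that each tope of the deletion lifts to a tope of $\M$. Once this and the ``isometric $=$ Hamming-distance-$1$ adjacency'' principle are in place, both assertions follow by bookkeeping, and the $U$-compatibility of the isomorphisms on $\Theta$-classes is automatic because all the maps involved (restriction and contraction) preserve the coordinate labels of edges.
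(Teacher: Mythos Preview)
Your proof is correct and takes a genuinely different route from the paper's. The paper argues inductively, contracting one $\Theta$-class $e\in\uX$ at a time and using the gate map $T\mapsto X\circ T$ from Lemma~\ref{lem:face-gated} to show that, at each step, contracting $E_e$ and restricting to the halfspace containing $[X]$ produce the same face inside $\cube(X)$; iterating over $\uX$ yields the $U$-isomorphism. You instead work globally: you set up the explicit map $\phi(T)=T\setminus\uX$ and verify that it is a bijection preserving Hamming-distance-$1$ adjacency, relying only on the principle that an isometric subgraph of a hypercube is the induced subgraph on its vertex set (so its edges are exactly the Hamming-$1$ pairs). Your use of Lemma~\ref{lem:face-gated} is lighter---you only extract convexity/isometry of $[X]$, not the explicit gate formula---and in fact even this is slightly more than needed: since $G(\M)$ is already an induced subgraph of $Q(U)$ on Hamming adjacency, so is $[X]$, regardless of isometry. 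Your approach is more elementary and direct; the paper's approach makes the geometric reason for the coincidence of restriction and contraction (the gate structure) more explicit, which resonates with later uses of gates in the paper. Both handle the first assertion the same way, via the composition $X\circ T$ to lift topes of the deletion.
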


\begin{proof} That $G(\M\setminus A)=\pi_A(G(\M))$ follows from the equivalence between deletion in COMs and pc-contraction  in their tope graphs.
To prove that $[X]$ is $U$-isomorphic to $G(\M(X))$, note that $[X]$ is obtained from $G(\M)$ by a pc-restriction: $[X]$ is the intersection of the halfspaces defined by the $\Theta$-classes $E_e$ with $e\in \uX$ and containing $[X]$. We assert that the pc-restrictions and the
pc-contractions over  $\uX$ give the same result, i.e., that $\pi_{\uX}(G(\M))$ is $U$-isomorphic to $[X]$. Indeed, by Lemma~\ref{lem:face-gated},  $[X]$ is a gated subgraph of $G(\M)$. Pick any $e\in \uX$ and consider the
elementary pc-contraction of the class $E_e$. By Lemma~\ref{lem:face-gated}, the gate of any tope $T$ of $\M$ in $[X]$ and in the cube $\cube(X)$ is $X\circ T$. Therefore, if $T,T'\in \{-1,+1\}^U$ such that $\Sep(T,T')=e$, $T$ is a vertex of $G(\M)$
not belonging to $[X]$, and $T'$ belongs to $\cube(X)$, then necessarily  $T'=X\circ T$ and thus $T'$ must be a vertex of $[X]$. This implies that the intersection of the cube $\cube(X)$ with $\pi_e(G(\M))$
(which is the tope graph of the face of $X$ in $\M\setminus e$) coincides with $[X]$. Consequently, $[X]$ coincides with $\pi_{e}(G(\M))$. Performing elementary pc-contractions for all elements of $\uX$
we conclude that $[X]$ is $U$-isomorphic to $\pi_{\uX}(G(\M))=G(\M(X))$.
\end{proof}

%A  subgraph $H$ of $G$ is called {\it gated} (in $G$)~\cite{DrSch} if for
%every vertex $x$ outside $H$ there
%exists a vertex $x'$ (the {\it gate} of $x$) in $H$ such that each vertex $y$
%of $H$ is connected with $x$ by a shortest path passing through the gate $x'$.
%It is easy to see that if $x$ has a gate in $H$, then it is unique and that
%gated sets are convex.
%The following lemma is implicit in~\cite{BaChKn} and explicit in~\cite{KnMa}:

%An \emph{antipode} of a vertex $v$ in a partial cube $G$ is a (necessarily
%unique) vertex $-v$ such that $G=I(v,-v)$.
%Note that in partial cubes the antipode is unique and $\conv(v,-v)$
%coincides with the interval $I(v,-v)$.
%A partial cube $G$ is \emph{antipodal} if all its vertices have antipodes. A
%consequence of (Sym) is that a tope graph of a COM is the tope graph of an OM
%if and only if it is antipodal, see~\cite{KnMa}.
% For a subgraph $H$ of an antipodal partial cube $G$ we  denote by $-H$ the
%set of antipodes of $H$ in $G$.
%We will also use several times the following result:

%\begin{lemma}\label{lem:antipodal}~\cite[Lemma 7]{ChKnPh_2d}
%If $G$ is a  proper convex subgraph of an antipodal partial cube
%$H\in\mathcal{F}(\Q_{d+1})$, then $G\in\mathcal{F}(\Q_{d})$.
%\end{lemma}

\subsection{VC-dimension}

Let $\SS$ be a family of subsets of an $m$-element set $U$.  A subset $X$ of
$U$ is \emph{shattered} by $\SS$ if for
all $Y\subseteq X$ there exists $S\in\SS$ such that $S\cap X=Y$. The
\emph{Vapnik-Chervonenkis dimension}
(VC-dimension)~~\cite{VaCh} $\vcd(\SS)$ of $\SS$ is the cardinality of the
largest subset of $U$ shattered by $\SS$.
Any set system $\SS\subseteq 2^U$ can be viewed as a subset of
vertices of the $m$-dimensional hypercube $\Q_m=\Q(U)$. Denote by $G(\SS)$ the
\emph{1-inclusion
graph} of $\SS$, i.e., the subgraph of $\Q(U)$ induced
by the vertices of $\Q(U)$ corresponding to $\SS$. %; $G(\SS)$ is the . %; each
%%subgraph of  $\Q_m$ is the 1-inclusion graph of a family of subsets of $U$.
A subgraph $G$ of $\Q(U)$ has VC-dimension $d$ if $G$ is the 1-inclusion graph
of a set system of VC-dimension $d$.
For partial cubes, the notions of
shattering and VC-dimension can be formulated in terms of pc-minors. First, note that
if $G'$ is a pc-minor of a partial cube $G$ and $G'$ shatters a subset $X$ of $U$,
then $G$ also shatters $X$. Thus a partial cube $G$ has VC-dimension $\le d$ if
and only if $G$ does not have the hypercube $\Q_{d+1}$ as a pc-minor. More precisely
a subset $D\subseteq U$ of the $\Theta$-classes of $G$ shatters $G$ if
$\pi_{U\setminus D}(G)$ is isomorphic to a hypercube. This is well-defined,
since the embeddings of partial cubes %in the smallest cube
are unique up to isomorphism, see e.g.~\cite[Chapter 5]{Ov1}.

The \emph{VC-dimension} $\vcd(\M)$ of a COM $\M=(U,\covectors)$ is the
VC-dimension of its tope graph $G(\M)$ and we say that $D\subseteq U$ is shattered by $\M$ if $D$ is shattered by $G(\M)$.
%The  VC-dimension of OMs and  COMs
The \emph{VC-dimension} $\vcd(X)$ of a covector $X\in \covectors$ of $\M$ is
the VC-dimension of the OM $\M(X)$, i.e., by Lemma \ref{minor-pc-minor}, it is the VC-dimension of the graph
$[X]$. The VC-dimension of OMs, COMs, and their covectors
can be expressed in the following way:

\begin{lemma} \label{lem:rankfunction}~\cite[Lemma 13]{ChKnPh_CUOM} For a COM
$\M$, $\vcd(\M)=\max\{ \vcd(\M(X)): X\in \covectors\}$.
%The VC-dimension of a COM $\M$ is the largest VC-dimension among all $X\in
%\covectors$.
If $\M$ is an OM and $X$ a cocircuit of $\M$, then $\vcd(X)+1=\vcd(\M)=\rk(\M)$.
%If $\M=(U,\covectors)$ is a COM and $X\in \covectors$, then the rank of the OM
%$\M(X)$ is its VC-dimension (i.e., the VC-dimension of $[X]$).
\end{lemma}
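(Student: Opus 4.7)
The plan is to deduce the first equality from the correspondence between the deletion $\M(X) = \M \setminus \uX$ in a COM and the pc-contraction $\pi_{\uX}(G(\M))$ of its tope graph, as established in Lemma~\ref{minor-pc-minor}. Since any set of $\Theta$-classes shattered in a pc-contraction of a partial cube is also shattered in the original, the inequality $\vcd(\M) \geq \vcd(\M(X))$ follows at once for every $X \in \covectors$.

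For the reverse direction, let $D\subseteq U$ with $|D|=\vcd(\M)$ be a set shattered by $\M$. By the shattering hypothesis and Lemma~\ref{minor-pc-minor}, the tope graph of $\M \setminus (U\setminus D)$ is $\pi_{U\setminus D}(G(\M))=Q(D)$. Since COMs are determined by their tope graphs (see~\cite{BaChKn,KnMa}) and $Q(D)$ is the tope graph of the free OM on $D$, the COM $\M\setminus(U\setminus D)$ must be this free OM and in particular contains $\mathbf{0}$ as a covector. By the definition of deletion via restriction of covectors, $\mathbf{0}$ lifts to some $X\in\covectors$ with $X|_D=\mathbf{0}$, i.e., $\uX\cap D=\emptyset$. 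The crucial observation is that the iterated deletion $\M(X)\setminus((U\setminus\uX)\setminus D)$ equals $\M\setminus(U\setminus D)$, since $\uX\cap D=\emptyset$ forces $\uX\cup((U\setminus\uX)\setminus D)=U\setminus D$. Thus $Q(D)$ is a pc-contraction of $G(\M(X))$, so $D$ is shattered by $\M(X)$, yielding $\vcd(\M(X))\geq|D|=\vcd(\M)$.

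For the second statement, I would first invoke the classical identity $\vcd(\M)=\rk(\M)$ for OMs, which follows from the equivalence: $D$ is shattered by $\M$ iff the restriction $\M \setminus (U\setminus D)$ is the free OM on $D$ iff $D$ is independent in the underlying matroid $\underline{\M}$; hence the maximum size of a shattered set is $\rk(\underline{\M})=\rk(\M)$. For a cocircuit $X$ of $\M$, the support $\uX$ is a cocircuit of $\underline{\M}$, so $U\setminus\uX$ is a hyperplane of rank $\rk(\M)-1$, giving $\rk(\M(X))=\rk(\M)-1$. Since $\M(X)$ is also an OM (by closure under deletion), applying $\vcd=\rk$ to it yields $\vcd(X)=\vcd(\M(X))=\rk(\M)-1=\vcd(\M)-1$.

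The main technical point is the use of tope-graph uniqueness of COMs to identify $\M\setminus(U\setminus D)$ as a free OM, which is what furnishes the covector $X\in\covectors$ with $\uX\cap D=\emptyset$ used in the $\leq$ direction of the first equality; the rest of the argument reduces to careful bookkeeping of how deletions compose in a COM and correspond to pc-contractions at the level of tope graphs.
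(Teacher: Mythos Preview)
Your proof is correct. Note, however, that the paper does not actually prove this lemma in-text: the first assertion is simply quoted from~\cite[Lemma~13]{ChKnPh_CUOM}, and the only argument supplied here is the one-sentence remark following the statement, which derives $\vcd(X)=\vcd(\M)-1$ for a cocircuit $X$ from the fact that cocircuits are the atoms of the graded big face lattice $\mathcal{F}_{\mathrm{big}}(\M)$.

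Your argument for the first equality---lifting the zero covector from the deletion $\M\setminus(U\setminus D)$ via the tope-graph determination of COMs, and then recognizing $\M(X)\setminus((U\setminus\uX)\setminus D)=\M\setminus(U\setminus D)$---is the natural one and presumably close to what is in the cited source. For the second part you take a different but equally valid route from the paper's remark: instead of invoking gradedness of $\mathcal{F}_{\mathrm{big}}(\M)$, you pass to the underlying matroid $\underline{\M}$, use that the complement of a cocircuit support is a hyperplane to get $\rk(\M(X))=\rk(\M)-1$, and combine this with $\vcd=\rk$ for OMs via the shattering--independence equivalence (which is the content of Lemma~\ref{lem:shatterandcircuits}, appearing later in the paper but logically independent). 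Your approach is more matroid-theoretic, the paper's more order-theoretic; both are one-line applications of standard OM facts once $\vcd=\rk$ is in hand.
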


That $\vcd(X)=\vcd(\M)-1$  for cocircuits $X$ of an OM $\M$ follows from  the
fact that
the cocircuits  are atoms of the big face lattice
$\mathcal{F}_{\mathrm{big}}(\M)$ and this lattice   is graded.
 % the fact that
%the rank of any face of $\M$ is strictly smaller than $\rk(\M)$.

%\begin{lemma}\label{lem:cellsachieverank}\comment{~\cite[Lemma 42]{ChKnPh_2d}}
%If $\M$ is a COM of VC-dimension $d$, then there is a covector
%$X\in\covectors$ such that the $\OM$ $\uparr X$ has VC-dimension $d$.
%\end{lemma}

%The following result summarizes
%the properties of contractions of partial cubes established in~\cite{ChKnMa}:
%
%
%\begin{lemma} \label{contraction-ChKnMa} Let $G$ be a partial cube and $E_i$
%be a $\Theta$-class of $G$.
%
%\begin{itemize}
%\item[(i)]~\cite[Lemma 5]{ChKnMa} If $H$ is a convex subgraph of $G$ and $E_i$
%crosses or is disjoint from $H$,
%then $\pi_i(H)$ is also a convex subgraph of $\pi_i(G)$;
%\item[(ii)]~\cite[Lemma 7]{ChKnMa} If $S$ is a subset of vertices of $G$, then
%$\pi_i(\conv(S))\subseteq \conv(\pi_i(S))$.
%If $E_i$ crosses $S$, then $\pi_i(\conv(S))= \conv(\pi_i(S))$;
%\item[(iii)]~\cite[Lemma 10]{ChKnMa} If $S$ is a gated subgraph of $G$, then
%$\pi_i(S)$ is a  gated subgraph of $\pi_i(G)$.
%\end{itemize}
%\end{lemma}

\section{Auxiliary results}\label{sec:prel-results}
We establish and recall several auxiliary results about OMs and COMs. We also
develop a correspondence between realizable samples and convex subgraphs of
partial cubes. Finally, we define upper and lower covectors for a given sample, which are crucial notions for the main result.

%All proofs are presented in the Appendix.

\subsection{More about shattering in  OMs and COMs}
We continue with new results about shattering in OMs and COMs. Let $G$ be a
partial cube,  $H$ a convex subgraph, and $E_e$ a $\Theta$-class of $G$. We say
that $E_e$ {\it crosses}
$H$  if $H$ contains an edge of $E_e$. If $E_e$ does not cross $H$ and there
exists an edge $uv$ of $E_e$ with $u\in H$ and $v\notin H$, then $E_e$  and $H$ {\it
osculate}. Otherwise, $E_e$ is {\it disjoint} from $H$. Denote by $\osc(H)$ the
set of all $e$ such that $E_e$ osculates
with $H$ and by $\cross(H)$ the set of all $e$ such that $E_e$ crosses $H$.

%Let $H$ be a convex subgraph of the partial cube $G$ and $E_e$ be a
%$\Theta$-class of $G$. We say  that  (a) $E_e$ {\it crosses}
%$H$  if $H$ contains an edge $uv$ of $E_e$ and that (b) $E_e$  and $H$ {\it
%osculates} (notation $e\sim H$) if $E_e$ does not cross $H$ and there exists
%an edge $uv$ of $E_i$ with $u\in H$ and $v\notin H$; otherwise, (c) $E_e$ is
%{\it disjoint} from $H$. We will denote by  $\osc(H)$ the set of all $e$ such
%that $E_e$ osculate
%with $H$ and by $\cross(H)$  the set of all $e$ such that $E_e$ crosses $H$.
%
%For a convex subgraph $H$ of a partial cube we say that  $\Theta$-class $E_e$
%for $e\in U$ of $G$ \emph{osculates} with $H$ - denoted $H\sim e$ - if there
%is
%an edge in $E_e$ with one endpoint in $H$ and one endpoint not in $H$. We say
%that $E_e$ \emph{crosses} $H$ if there is an edge in $E_e$ with both endpoints
%in $H$.

%\newcounter{cptLemmaOscilate}
%\setcounter{cptLemmaOscilate}{\thelemma}
\begin{lemma}\label{lem:oscilate}
Let $G$ be a partial cube, $H$ a convex subgraph of $G$, and $e\notin \osc(H)$. Then
$\pi_e(H)$ is convex in $\pi_e(G)$ and $\osc(\pi_e(H))=\osc(H)$,
where $\osc(H)$ and $\osc(\pi_e(H))$ are considered in $G$ and
$\pi_e(G)$, respectively.
\end{lemma}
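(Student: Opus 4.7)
\medskip
\noindent\textbf{Proof plan.}
My plan is to exploit the characterization of a convex subgraph $H$ of a partial cube as the intersection of its supporting halfspaces: writing $F:=\{f\in U:H\subseteq G_f^{\sigma(f)}\}$ for the corresponding signs $\sigma(f)$, we have $H=\bigcap_{f\in F}G_f^{\sigma(f)}$. The hypothesis $e\notin\osc(H)$ means precisely that either $e\notin F$ (equivalently, $E_e$ crosses $H$) or $e\in F$ and $E_e$ is disjoint from $H$. A second observation is that for any $w'\in\pi_e(G)$ and any $f\neq e$, the preimages of $w'$ in $G$ share the same $f$-coordinate, so $w'\in\pi_e(G)_f^{\sigma(f)}$ iff every preimage of $w'$ lies in $G_f^{\sigma(f)}$.

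I will first prove convexity of $\pi_e(H)$ by establishing the identity $\pi_e(H)=\bigcap_{f\in F\setminus\{e\}}\pi_e(G)_f^{\sigma(f)}$, which exhibits $\pi_e(H)$ as an intersection of halfspaces of $\pi_e(G)$, hence convex. The inclusion $\subseteq$ is immediate. For $\supseteq$, take $w'$ in the right-hand side and any preimage $w\in G$; by the second observation, $w\in G_f^{\sigma(f)}$ for every $f\in F\setminus\{e\}$. If $e\notin F$, then already $w\in\bigcap_{f\in F}G_f^{\sigma(f)}=H$. If $e\in F$ and $w\in G_e^{-\sigma(e)}$, let $h$ be the vertex obtained from $w$ by flipping its $e$-coordinate; then $h\in\bigcap_{f\in F}G_f^{\sigma(f)}=H\subseteq G$, so the edge $hw\in E_e$ lies in $G$ and is incident to $H$, contradicting the disjointness of $E_e$ from $H$. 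Hence $w\in G_e^{\sigma(e)}$ and again $w\in H$. In either case $w'\in\pi_e(H)$.

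The equality $\osc(\pi_e(H))=\osc(H)$ rests on the sub-lemma: \emph{if $E_e$ crosses a convex subgraph $H$ and $xy\in E_e$ with $y\in H$, then $x\in H$.} To prove it, pick $w\in H$ on the same side of $E_e$ as $x$ (which exists because $E_e$ crosses $H$); by the isometric embedding into the hypercube, $d_G(y,w)=1+d_G(x,w)$, so $x\in I_G(y,w)\subseteq H$. Using this, the inclusion $\osc(H)\subseteq\osc(\pi_e(H))$ follows by projecting any $E_f$-boundary edge $uv$ of $H$ to $\pi_e(u)\pi_e(v)$ and noting that $\pi_e(H)\subseteq\pi_e(G)_f^{\sigma(f)}$ forbids $\pi_e(v)\in\pi_e(H)$. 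For the reverse, I would lift an $E_f$-boundary edge $u'v'$ of $\pi_e(H)$ to some $E_f$-edge $xy\in G$ with $\pi_e(x)=u'$; if $x\notin H$ then the other preimage of $u'$, namely the $E_e$-partner $\tilde{x}$ of $x$ in $G$, must lie in $H$, and the $E_e$-edge $x\tilde{x}$ then has $\tilde{x}\in H$ and $x\notin H$, excluding the disjoint case, so $E_e$ crosses $H$ and the sub-lemma forces $x\in H$, a contradiction. Hence $x\in H$, and $y\notin H$ (else $E_f$ would cross $H$), yielding $f\in\osc(H)$. The hardest step will be the sub-lemma and its careful application in the reverse direction for $\osc$, where one must exclude the configuration in which the lift of $u'$ falls outside $H$.
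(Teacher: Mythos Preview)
Your overall strategy matches the paper's: for $\osc(\pi_e(H))=\osc(H)$ both you and the paper lift a boundary edge of $\pi_e(H)$ back to $G$ and use the convexity of $H$ (your ``sub-lemma'' is exactly the interval argument the paper uses). For the convexity of $\pi_e(H)$ the paper simply cites \cite[Lemma~5]{ChKnMa}, whereas you attempt a self-contained proof via the halfspace description; this is a reasonable and slightly more informative route.

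There is, however, a genuine gap in your convexity argument. In the case $e\in F$ with $E_e$ disjoint from $H$ and $w\in G_e^{-\sigma(e)}$, you ``let $h$ be the vertex obtained from $w$ by flipping its $e$-coordinate'' and conclude $h\in\bigcap_{f\in F}G_f^{\sigma(f)}=H$. But the halfspaces $G_f^{\sigma(f)}$ are subsets of $V(G)$, so this conclusion presupposes $h\in V(G)$, which you have not established and which need not hold in a partial cube. The correct fix is to take any $z\in H$ and a shortest $(w,z)$-path in $G$; since $w$ and $z$ lie on opposite sides of $E_e$, this path contains an $E_e$-edge $xy$ with $y$ on the $\sigma(e)$-side, and by convexity of each halfspace $G_f^{\sigma(f)}$ (for $f\in F\setminus\{e\}$) the vertex $y$ lies in $\bigcap_{f\in F}G_f^{\sigma(f)}=H$, giving the desired osculating edge and the contradiction.

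One minor point in the reverse inclusion $\osc(\pi_e(H))\subseteq\osc(H)$: after obtaining $x\in H$ with $xy\in E_f$, you still need that $E_f$ does not cross $H$ (not merely $y\notin H$) to conclude $f\in\osc(H)$. This is immediate, since any $E_f$-edge inside $H$ would project to an $E_f$-edge inside $\pi_e(H)$, contradicting $f\in\osc(\pi_e(H))$; but it should be said.
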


\begin{proof}
	Let $H'=\pi_e(H)$. First, since $e \notin \osc(H)$, the fact that
	$H'$ is a convex subgraph of $\pi_e(G)$ comes from~\cite[Lemma 5]{ChKnMa}.
	Then, the inclusion $\osc(H)\subseteq \osc(H')$ is obvious. If there exists
	$e'\in \osc(H')\setminus \osc(H)$, then there exists an edge
	$\pi_e(u)\pi_e(v)$ in $\pi_e(E_{e'})$ with $\pi_e(u)\in V(H')$ and
	$\pi_e(v)\notin V(H')$. Then $\pi_e(u)\pi_e(v)$ comes from an edge $uv$ of
	$G$ belonging to $E_{e'}$. Since $e'\notin \osc(H)$ and the vertex $v$ does not belong to $H$, the vertex $u$ also
    does not belong to $H$. This implies that there exists an edge $uw$ of
	$E_e$ with $w\in V(H)$. If $E_e$ and $H$ contain an
    edge $u'w'$ and say $d(u,u')<d(w,u')$, then  $u\in I(w,u')$, which contradicts
    the convexity of $H$. Thus $E_e$ and $H$ osculate,
    a contradiction. This establishes the equality $\osc(\pi_e(H))=\osc(H)$.
\end{proof}

%\newcounter{cptLemmaGatedShatter}
%\setcounter{cptLemmaGatedShatter}{\thelemma}
\begin{lemma}\label{lem:gatedshatter}
 Let $G$ be a partial cube and $H$ a gated subgraph of $G$. If  $D\subseteq
 \cross(H)$ is  shattered by $G$, then $D$ is shattered by $H$.
 \end{lemma}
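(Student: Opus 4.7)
The plan is to unfold the definition of shattering and show that for every sign pattern $\sigma\in \{-1,+1\}^D$ realized by some vertex of $G$, the gate of that vertex in $H$ realizes the same pattern on $D$. Recall that $D$ is shattered by a partial cube $G$ precisely when the projection $\pi_{U\setminus D}(G)$ is the full hypercube $\Q(D)$, equivalently when every $\sigma\in \{-1,+1\}^D$ arises as the restriction to $D$ of some vertex of $G$. Exhibiting such a vertex inside $H$ for each $\sigma$ will prove that $D$ is shattered by $H$ as well.

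To carry this out, I would fix $\sigma$ and a vertex $v$ of $G$ whose coordinates on $D$ agree with $\sigma$, and let $v'$ denote the gate of $v$ in $H$. The core step is to verify that $v'$ and $v$ lie on the same side of $E_e$ for every $e\in D$. Since $e\in \cross(H)$, the $\Theta$-class $E_e$ contains an edge of $H$, so both $H\cap G^+_e$ and $H\cap G^-_e$ are non-empty; in particular, $H$ contains some vertex $u$ lying in the same halfspace of $E_e$ as $v$. The gate property yields $v'\in I(v,u)$, and the halfspace of $E_e$ containing both $v$ and $u$ is convex by Theorem \ref{Djokovic}, hence contains the entire interval $I(v,u)$. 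Therefore $v'$ lies on the same side of $E_e$ as $v$, which means that $v'$ also agrees with $\sigma$ at the coordinate $e$. Letting $e$ range over $D$ shows that $v'\in V(H)$ realizes $\sigma$ on $D$.

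I do not anticipate a serious obstacle: the argument is a direct combination of the gate property with the convexity of halfspaces in partial cubes. The only subtle point is the existence of the auxiliary vertex $u\in H$ on the same side of $E_e$ as $v$, which is available precisely because $E_e$ crosses (rather than merely osculates with) $H$; this is the single place where the hypothesis $D\subseteq \cross(H)$ is used.
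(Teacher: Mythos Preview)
Your proposal is correct and is essentially identical to the paper's proof: both take the gate $v'$ of a vertex $v$ realizing a given sign pattern on $D$, use $D\subseteq\cross(H)$ to find a vertex of $H$ on the same side of each $E_e$ as $v$, and invoke convexity of halfspaces together with the gate property to conclude that $v'$ realizes the same pattern. The only cosmetic difference is notation ($u$ versus $w$, $\sigma$ versus $X$).
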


\begin{proof} Pick any $\Theta$-class $E_e$ with $e\in D$ and let $v$ be any vertex of
	$G$. If $v$ belongs to the halfspace $G_e^-$  of $G$, then the gate $v'$ of
	$v$ in $H$ also belongs to $G_e^-$. Indeed, since $E_e$ crosses $H$, there
	exists a vertex $w\in G_e^-\cap H$. Then $v'\in I(v,w)\subset G^-_e$ by
	convexity of $G^-_e$ and because  $v'$ is the gate of $v$ in $H$. Analogously,
	if $v\in G^+_e$, then $v'\in G^+_e$.
	
	Since $G$ shatters $D$, for any sign vector $X\in \{ -,+\}^D=\{ -1,+1\}^D$, there
	exists a vertex  $v_X$ of $G$, whose restriction to $D$ coincides with $X$.
	This means that for any $e\in D$, the vertex $v_X$ belongs to the halfspace
	$G_e^{X_e}$. Since the gate $v'_X$ of $v_X$ in $H$ also belongs to
	$G_e^{X_e}$, the restriction of $v'_X$ to $D$ also coincides with $X$. This
	implies that $H$ also shatters $D$.
\end{proof}

%\begin{proof} Pick any $\Theta$-class $E_e$ with $e\in D$ and let $v$ be any
%vertex of $G$. If $v$ belongs to the halfspace $G_e^-$  of $G$, then the gate
%$v'$ of $v$ in $H$
%also belongs to $G_e^-$. Indeed, since $E_e$ crosses $H$, there exists a
%vertex $w\in G_e^-\cap H$. Then $v'\in I(v,w)\subset G^-_e$ by convexity of
%$G^-_e$ and since $v'$ is the gate of $v$ in $H$.
%Analogously, if $v\in G^+_e$, then $v'\in G^+_e$.

%Since $G$ shatters $D$, for any sign vector $X\in \{ -1,+1\}^D$, there exists
%a vertex  $v_X$ of $G$, whose restriction to $D$ coincides with $X$. This
%means
%that for any $e\in D$, the vertex $v_X$ belongs to the halfspace $G_e^{X_e}$.
%Since the gate $v'_X$ of $v_X$ in $H$ also belongs to $G_e^{X_e}$, the
%restriction of $v'_X$ to $D$ also coincides with $X$. This implies that $H$
%also shatters $D$.
%\end{proof}

%We start with a relationship between the rank and the VC-dimension of OMs,
%which we rephrase in terms of faces of COMs:

%Let $\M=(U,\covectors)$ be a $\COM$, we consider
%$\covectors\subseteq\{\pm,0\}^U$ with the usual product order coming from
%$0<+,-$. The corresponding poset is called \emph{big face semi-lattice}
%$\mathcal{F}%_{\mathrm{big}}(\M)$.
%\begin{lemma}\label{lem:rankfunction}~\cite[Lemma 13]{ChKnPh_2d}
% Let $\M=(U,\covectors)$ be a $\COM$ and $X\in \covectors$. Then the rank of
%the OM $\M(X)\cong \uparr X$ is its VC-dimension.
% \end{lemma}

The next lemma  shows  that the sets shattered by an OM $\M$ are exactly the \emph{independent sets} of the underlying
matroid $\underline{\M}$, i.e.,
the sets not containing supports of circuits of $\M$. % are  independent sets
%of the underlying matroid $\underline{\M}$:

%\newcounter{cptLemmaShatterandcircuits}
%\setcounter{cptLemmaShatterandcircuits}{\thelemma}
\begin{lemma}\label{lem:shatterandcircuits}
 Let $\M=(U,\covectors)$ be an $\OM$ and $D$ be a subset of $U$. Then $D$ is
 shattered by $\M$ if and only if $D$ is independent in the underlying matroid
 $\underline{\M}$. %, then there is a circuit  $Y$ of $\M$ with support
 %included in  $D$.
\end{lemma}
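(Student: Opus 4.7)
The plan is to prove the two implications separately, using orthogonality of circuits and covectors for the forward direction, and the OM deletion $\M\setminus(U\setminus D)$ together with Lemma~\ref{lem:deletecircuits} for the backward direction.

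For the forward direction I would argue the contrapositive: suppose $D\supseteq \uY$ for some circuit $Y\in\circuits$. By Theorem~\ref{lem:orthogonality}, $Y$ is orthogonal to every cocircuit, and a routine case check shows that if $Y\perp X$ and $Y\perp X'$ then $Y\perp X\circ X'$. Since every covector, and in particular every tope $T$, is a composition of cocircuits, it follows that $Y\perp T$. As $T$ has full support, $\underline{T}\cap \uY=\uY\neq\varnothing$, so there must exist $e,f\in\uY$ with $T_eY_e=-T_fY_f$. Hence no tope $T$ satisfies $T|_{\uY}=Y|_{\uY}$ (as that would force every product $T_eY_e$ to equal $+1$), and any $\sigma\in\{-1,+1\}^D$ whose restriction to $\uY$ agrees with $Y|_{\uY}$ is not realized by any tope. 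Consequently $D$ is not shattered.

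For the backward direction, set $\M':=\M\setminus(U\setminus D)$, which is an OM by~\cite[Lemma 1]{BaChKn}. By Lemma~\ref{lem:deletecircuits} its circuits are exactly the circuits of $\M$ with support contained in $D$; since $D$ is independent, $\M'$ has no circuits. Therefore $\underline{\M'}$ is the free matroid on $D$, whose cocircuits are precisely the singletons $\{e\}$ for $e\in D$. As the underlying supports of OM cocircuits are cocircuits of the underlying matroid, every cocircuit of $\M'$ has singleton support, and by (Sym) for each $e\in D$ both sign vectors with support $\{e\}$ (the one with $+1$ on $e$ and the one with $-1$ on $e$) are cocircuits of $\M'$. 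For any $\sigma\in\{-1,+1\}^D$, composing over $e\in D$ the cocircuit that takes value $\sigma_e$ at $e$ yields $\sigma$ itself, which is a covector of full support, hence a tope of $\M'$. Since the topes of $\M\setminus(U\setminus D)$ are precisely the restrictions to $D$ of the topes of $\M$, every sign pattern in $\{-1,+1\}^D$ is realized by a tope of $\M$, and $D$ is shattered. Equivalently, by Lemma~\ref{minor-pc-minor}, $\pi_{U\setminus D}(G(\M))=G(\M')$ is the full cube $\Q(D)$.

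The main obstacle is the backward direction: one has to show that an OM with no circuits is the Boolean OM on its ground set. I handle this by passing through the underlying matroid (which must be the free matroid), reading off that the OM cocircuits are signed singletons, and then using (Sym) together with the composition axiom (C) to generate every $\pm 1$ sign pattern as a composition of cocircuits.
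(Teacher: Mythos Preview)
Your proof is correct. The paper handles both directions at once via a short chain of equivalences: passing to the deletion $\M_{|D}=\M\setminus(U\setminus D)$, shattering becomes the statement that $G(\M_{|D})$ is the full cube, i.e.\ $\covectors(\M_{|D})=\{\pm1,0\}^D$; by Theorem~\ref{lem:orthogonality} this is equivalent to $\mathcal{V}(\M_{|D})=\{\mathbf{0}\}$ and hence $\mathcal{C}(\M_{|D})=\varnothing$, which by Lemma~\ref{lem:deletecircuits} says exactly that no circuit of $\M$ has support inside $D$. Your argument uses the same ingredients but organizes them differently: for the forward direction you bypass the deletion and work directly in $\M$, showing that a circuit $Y$ with $\uY\subseteq D$ is orthogonal to every tope (via closure of orthogonality under composition) and therefore blocks the sign pattern $Y|_{\uY}$; for the backward direction you do pass to $\M'=\M\setminus(U\setminus D)$, but instead of invoking duality as a black box you identify $\underline{\M'}$ as the free matroid, read off that the OM cocircuits are the signed singletons, and compose them to obtain every tope. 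The paper's route is more compact because it uses Theorem~\ref{lem:orthogonality} in both directions simultaneously; yours is a bit more hands-on and has the minor advantage of making explicit the little fact that $\{Z:Y\perp Z\}$ is closed under composition, which the paper's proof leaves implicit inside the cited duality results.
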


\begin{proof}
By definition $D$ is shattered by $\M$ if and only if $D$ is
 shattered by $G(\M)$. This is equivalent to $\pi_{U\setminus D}(G(\M))=Q_{U\setminus D}$. But since  we have $\pi_{U\setminus D}(G(\M))=G(\M_{|D})$
 this means $\covectors(\M_{|D})=\{\pm 1,0\}^D$. By Theorem~\ref{lem:orthogonality} this is equivalent to $\mathcal{V}(\M_{|D})=\{\mathbf{0}\}$ and $\mathcal{C}(\M_{|D})=\varnothing$.
 Applying Lemma~\ref{lem:deletecircuits} this just means that the support of no circuit of $\M$ is contained in $D$. By definition this means that $D$ is independent in $\underline{\M}$.% contains no circuits of $ hence $D$ contains
\end{proof}

An \emph{antipode} of a vertex $v$ in a partial cube $G$ is a (necessarily
unique) vertex $-v$ such that $G=I(v,-v)$.
%Note that in partial cubes the antipode is unique and $\conv(v,-v)$
%coincides with the interval $I(v,-v)$.
A partial cube $G$ is \emph{antipodal} if all its vertices have antipodes. By
(Sym), a tope graph of a COM is the tope graph of an OM if and only if it is
antipodal, see~\cite{KnMa}.

The next lemma can be seen as dual
analogue of Lemma~\ref{lem:shatterandcircuits}. It shows that the VC-dimension
of OMs is defined locally at each tope $T$, by shattering subsets of $\osc(T)$.
%Lemma~\ref{lem:distinguish} presents a strengthening.

%\newcounter{cptLemmaLocalshattering}
%\setcounter{cptLemmaLocalshattering}{\thelemma}
\begin{lemma}\label{lem:localshattering}
 Let $\M=(U,\covectors)$ be an $\OM$ of rank $d$ with tope graph $G(\M)$. For
 any tope $T$ of $\M$, $\osc(T)$ contains a subset $D$ of size $d$ shattered
 by $\M$.
%Furthermore, for any element $e\in \osc(T)$, $D$ can be chosen so that $e\in
%D$  and $D\setminus \{ e\}$ is included in the support of  a cocircuit of $\M$.
\end{lemma}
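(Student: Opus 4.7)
The plan is to induct on $|U|$. The base case $|U|=0$ forces $d=0$ by simplicity, and $D=\varnothing$ works. For the inductive step with $|U|\ge 1$, I would split on whether $\osc(T)=U$ or not. If $\osc(T)=U$, I take $D$ to be any basis of the underlying matroid $\underline{\M}$, which has size $d$ and lies inside $\osc(T)$; by Lemma~\ref{lem:shatterandcircuits}, independent sets in $\underline{\M}$ are precisely the subsets shattered by $\M$, so $D$ is shattered.

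Otherwise, I would pick $e\in U\setminus\osc(T)$ and apply the inductive hypothesis to $\M\setminus e$ with the image tope $\pi_e(T)$. Three things need to be verified. First, $\M\setminus e$ must still have rank $d$: I would show that $e$ cannot be a coloop of $\underline{\M}$, for otherwise $\{e\}$ would be the support of a cocircuit, and by (Sym) both unit sign vectors $\pm e$ would belong to $\cocircuits\subseteq\covectors$. Then by (C) the composition $(-T_e\cdot e)\circ T$ is a covector differing from $T$ only in coordinate $e$; having full support, it is a tope adjacent to $T$ along $E_e$, contradicting $e\notin\osc(T)$. Second, $\osc(\pi_e(T))=\osc(T)$: since no edge of $E_e$ is incident to $T$, the pc-contraction $\pi_e$ does not identify $T$ with any other vertex, so the neighbors of $\pi_e(T)$ in $\pi_e(G(\M))=G(\M\setminus e)$ are exactly the images of the neighbors of $T$ in $G(\M)$. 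Third, shattering is preserved under pc-contraction, as noted in the preliminaries. The induction then yields $D\subseteq\osc(\pi_e(T))=\osc(T)$ of size $d$ shattered by $\M\setminus e$, and hence by $\M$.

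The main obstacle is the coloop argument in the first verification: it exploits the symmetry axiom (Sym) in an essential way, and this is the reason the statement is formulated for OMs rather than for general COMs. Once the coloop-vs-osculation dichotomy is pinned down, the rest of the induction reduces to the already-recorded fact that pc-contractions only shrink the family of shattered sets and to the matroidal characterization of shattering provided by Lemma~\ref{lem:shatterandcircuits}.
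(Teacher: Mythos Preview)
Your proof is correct and follows essentially the same inductive scheme as the paper: induct on $|U|$, dispose of the case $\osc(T)=U$ directly, and otherwise contract some $e\notin\osc(T)$ while checking that $\osc$ and the rank are preserved. The only notable difference is in the rank-preservation step: the paper argues that if $\rk(\M\setminus e)<d$ then $G(\M)\cong G(\M\setminus e)\,\square\,K_2$ via antipodality, forcing $e\in\osc(T)$, whereas you reach the same contradiction more directly by observing that $e$ would be a coloop and then composing $T$ with the unit cocircuit supported on $e$ to produce an $E_e$-neighbor of $T$; your version is slightly cleaner and avoids the detour through product structure.
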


\begin{proof} We proceed by induction on the size of $U$.  If $\osc(T)=U$, then
	we are obviously done. Thus suppose that there exists $e\notin \osc(T)$.
	Consider the tope graph	$G'=\pi_e(G)$ of the oriented matroid
	$\M'=\M\setminus e$. Let $T'=\pi_e(T)$.	Then $\osc(T')=\osc(T)$ by Lemma
	\ref{lem:oscilate}. If $\rk(\M')=d$, by	the induction hypothesis the set
	$\osc(T')$ contains a subset $D$ of size $d$ shattered by $G'$. Since $G'$
	is a pc-minor of $G$, the set $D\subset \osc(T')=\osc(T)$ is also shattered
    by $G$ and we are done.
	
	Thus, let $\rk(\M')<\rk(\M)$. If the $\Theta$-class $E_e$ of $G$ crosses
	the faces $\face(X)$ of all	cocircuits $X\in\covectors$, then $\covectors$
	is not simple. Therefore, there exists a cocircuit $X\in\covectors$ whose
	face $\face(X)$ is not crossed by $E_e$. However, since	when we contract
	$E_e$ the rank decreases by 1, the resulting OM $\M'$ coincides with
	$\face(X)$. Indeed,	after contraction the rank of $\face(X)$ remains the
	same. Hence, if $X$ would remain a cocircuit, then the global rank would not
	decrease. Hence, $G'$ is the tope graph of $\M(X)$. Since $G$ is an
	antipodal partial cube and $G_e^+=\face(X)$, we have $G_e^-\cong G_e^+$.
	This shows that $G\cong G_e^+\product K_2\cong G'\product K_2$.	This
	implies that $E_e$ osculate with $\{ T\}$ in $G$, contrary to the
	assumption $e\notin \osc(T)$.
%
%
%
%We proceed by induction on the size of $U$.  If $\osc(T)=U$, then
%	we are obviously done.
%	Thus suppose that there exists $e\notin \osc(T)$. Consider the tope graph
%	$G'=\pi_e(G)$ of the oriented matroid $\M'=\M\setminus e$. Let
%	$T'=\pi_e(T)$. Then $\osc(T')=\osc(T)$ by Lemma~\ref{lem:oscilate}. If
%	$\rk(\M')=d$, by induction hypothesis the set $\osc(T')$ contains a subset
%	$D$ of size $d$ shattered by $G'$. Since $G'$ is a pc-minor of $G$,
%	$D\subset \osc(T)$ is also shattered by $G$ and we are done.
%	
%	Thus, let $\rk(\M')<\rk(\M)$. If the $\Theta$-class $E_e$ of $G$ crosses
%	the faces $\uparr X$ of all cocircuits $X\in\covectors$, then $\covectors$
%	is not simple. Therefore, there exists a cocircuit  $X\in\covectors$ whose
%	face $\uparr X$ is not crossed by $E_e$. However, since
%	when we contract $E_e$ the rank decreases by 1, the resulting OM $\M'$
%	coincides with $\uparr X$. Indeed, after contraction the rank of $\uparr X$
%	remains the same. Hence, if $X$ would remain a cocircuit, then the global
%	rank would not decrease. Hence, $G'$ is the tope graph of $\M(X)$. Since
%	$G$ is an antipodal partial cube and $G_e^+=\uparr X$,
%	we have $G_e^-\cong G_e^+$. This shows that $G\cong G_e^+\product K_2\cong
%	G'\product K_2$. This implies that $E_e$ osculate with $\{ T\}$ in $G$,
%	contrary to the	assumption $e\notin \osc(T)$.
\end{proof}

Next we give a shattering property of COMs. %It is in its proof where the axiom
%(SE) of  COMs is crucial.  %We recall the following notions.
The {\it distance} $d(A,B)$ between sets $A,B$ of vertices  of  $G$ is $\min \{
d(a,b): a\in A, b\in B\}$. %; we also denote by $d(v,B)$ the distance between a
%vertex $v$ and a set $B$.
The set $\pr_B(A)=\{ a\in A: d(a,B)=d(A,B)\}$ is the {\it metric projection}
of $B$ on $A$. % consists of all vertices
%$a$ of $A$ realizing  $d(A,B)$, i.e., %between $A$ and $B$, i.e.,
%$\pr_B(A)=\{ a\in A: d(a,B)=d(A,B)\}$.
For two covectors $X,Y\in \covectors$ of a COM $\M$,
we denote by $\pr_{[X]}([Y])$ the metric
projection of  $[X]$ on  $[Y]$ in $G(\M)$.
Since $[X]$ and $[Y]$ are gated by Lemma~\ref{lem:face-gated},
$\pr_{[X]}([Y])$ consists of the gates of vertices of $[X]$ in $[Y]$, see~\cite{DrSch}.
Two faces $\face(X)$ and $\face(Y)$ %(or their corresponding gated subgraphs
%$[X]$ and $[Y]$)
of $\M$ are {\it parallel} if
$\pr_{[X]}([Y])=[Y]$ and  $\pr_{[Y]}([X])=[X]$. A {\it gallery} between
two parallel faces $\face(X)$ and $\face(Y)$ of  $\M$ is a sequence of faces
$(\face(X)=\face(X_0), \face(X_1),\ldots, \face(X_{k-1}),\face(X_k)=\face(Y))$
such that either $k=0$ (i.e., $\face(X)=\face(Y)$) or any two
faces of this sequence are parallel and any two consecutive faces
$\face(X_{i-1}), \face(X_i)$ are facets of a common face of $\covectors$. A {\it
geodesic gallery} between $\face(X)$ and $\face(Y)$ is a gallery of length
$|\Sep(X,Y)|$. Two parallel faces $\face(X), \face(Y)$ are {\it
adjacent} if $|\Sep(X,Y)|=1$, i.e., $\face(X)$ and $\face(Y)$ are opposite
facets of a
face of $\covectors$. See Figure~\ref{fig:galerie} and recall the following
result:

%Two covectors $X$ and $Y$ %(or their corresponding gated subgraphs $[X]$ and
%%$[Y]$)
%of $\M$ are {\it parallel} if
%$\pr_{[X]}[Y]=[Y]$ and  $\pr_{[Y]}([X])=[X]$. A {\it gallery} between
%two parallel covectors $X$ and $Y$ of  $\M$ is a sequence of covectors
%$(X=X_0,X_1,\ldots, X_{k-1},X_k=Y)$ such that any two
%covectors of this sequence are parallel and any two consecutive covectors
%$X_{i-1},X_i$ are cocircuits of a common face of $\covectors$. A {\it
%geodesic gallery} between $X$ and $Y$ is a gallery of length
%$|\Sep(X,Y)|$. Two parallel covectors $X,Y$ are  {\it
%adjacent} if $|\Sep(X,Y)|=1$, i.e., $X$ and $Y$ are opposite cocircuits of a
%face of $\covectors$. Recall the following result: %, where $d(\cdot,\cdot)$
%%denotes the distance of subgraphs:

\begin{figure}[htb]
	\centering
	\includegraphics[width=0.8\linewidth]{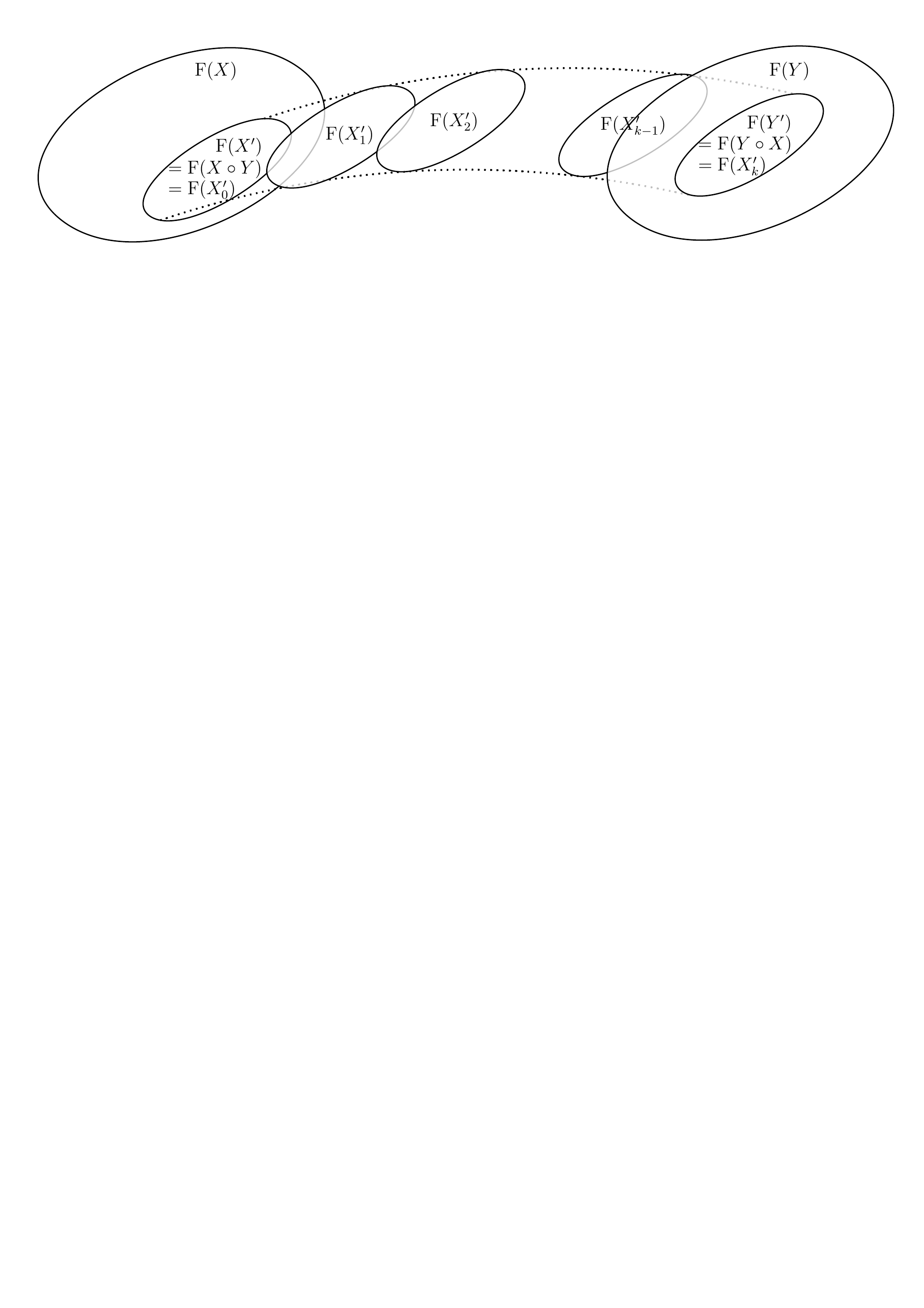}
	\caption{
		\label{fig:galerie}
		Illustration of Lemmas~\ref{prop:projection} and~\ref{lem:galleries}.
	}
\end{figure}

%\newcounter{cptLemmaProjection}
%\setcounter{cptLemmaProjection}{\thelemma}
\begin{lemma}
	\label{prop:projection}  ~\cite[Proposition 8]{ChKnPh_CUOM}
	Let $\M=(U,\covectors)$ be a COM and $X,Y\in \covectors$ (not necessarily distinct). Then: %For any
	%two covectors $X,Y$ of a COM $\M=(U,\covectors)$,  the following
	%properties
	%hold:
	\begin{itemize}
		\item[(i)]   $d([X],[Y])=|\Sep(X,Y)|$ and the gates of $[Y]$ in $[X]$ are
		the vertices of $[X\circ Y]\subseteq  [X]$;
        \item[(ii)] $\face(X)$ and $\face(Y)$ are parallel if and only if
        $\uX=\uY$. If $\face(X)$ and $\face(Y)$  are
		parallel, then they are connected by a geodesic gallery;
		\item[(iii)] $\pr_{[Y]}([X])=[X\circ Y],$  $\pr_{[X]}([Y])=[Y\circ X]$,
		and $\face(X\circ Y)$ and
		$\face(Y\circ X)$ are parallel.
		%\item[(iv)]  any two parallel covectors $X$ and $Y$  are
		%connected by a geodesic gallery.
     \end{itemize}
\end{lemma}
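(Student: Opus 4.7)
The plan is to use the gatedness of faces from Lemma~\ref{lem:face-gated} as the main engine, combined with (SE) and (FS) for the geodesic gallery in part (ii).

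For part (i), given any tope $T\in[Y]$, Lemma~\ref{lem:face-gated} identifies its gate in $[X]$ as $X\circ T$. A coordinate check shows $X\circ T\ge X\circ Y$: on $\uX$ both equal $X_e$, and on $\uY\setminus\uX$ the value of $X\circ T$ equals $T_e=Y_e$ since $T\ge Y$. Hence every gate of $[Y]$ in $[X]$ lies in $[X\circ Y]\subseteq [X]$. Conversely, for any tope $T'\in[X\circ Y]$, the tope $T:=Y\circ T'$ is the gate of $T'$ in $[Y]$ (so a tope of $[Y]$), and one verifies coordinate-wise that $X\circ T=T'$. For the distance formula, the upper bound $d([X],[Y])\le |\Sep(X,Y)|$ is realized by picking any tope $T^*\in[X]$ and setting $T:=Y\circ T^*\in[Y]$: since $T$ then agrees with $X$ on $\uX\setminus\uY$, one computes $|\Sep(T,X\circ T)|=|\Sep(X,Y)|$. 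The matching lower bound is immediate because on every $e\in\Sep(X,Y)\subseteq \uX\cap\uY$, every tope of $[X]$ takes value $X_e$ while every tope of $[Y]$ takes the opposite value.

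For part (ii), applying (i) in both directions shows that parallelism $\pr_{[X]}([Y])=[Y]$ and $\pr_{[Y]}([X])=[X]$ is equivalent to $[Y\circ X]=[Y]$ and $[X\circ Y]=[X]$. Since the simple OM $\M(X)$ has each element of $U\setminus\uX$ appearing with both signs on some tope of $\face(X)$, the equality $[X\circ Y]=[X]$ forces $\uY\subseteq\uX$; symmetrically $\uX\subseteq\uY$, giving $\uX=\uY$. Conversely, if $\uX=\uY$ then the same support analysis shows parallelism holds. For the geodesic gallery I would induct on $|\Sep(X,Y)|$: pick $e\in\Sep(X,Y)$, apply (SE) to $X,Y$ at $e$ to obtain $Z\in\covectors$ with $Z_e=0$ and $Z_f=(X\circ Y)_f$ off of $\Sep(X,Y)$; iterating (SE) on the remaining elements of $\Sep(X,Y)\setminus\{e\}$ to clean up unconstrained entries, and then applying (FS) via $Z'\circ -X$, yields a covector $X'\in\covectors$ with $\underline{X'}=\uX$ differing from $X$ only by a sign flip at $e$. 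Then $\face(X)$ and $\face(X')$ are opposite facets of a common face $\face(Z')$, and $|\Sep(X',Y)|=|\Sep(X,Y)|-1$, so the induction hypothesis completes the gallery.

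Part (iii) is essentially bookkeeping: the first two equalities restate (i) applied in each direction, and since $X\circ Y$ and $Y\circ X$ both have support $\uX\cup\uY$, part (ii) yields parallelism of $\face(X\circ Y)$ and $\face(Y\circ X)$. The main obstacle I foresee is the gallery construction in part (ii): a single use of (SE) constrains only the $e$-coordinate of $Z$ and leaves entries on $\Sep(X,Y)\setminus\{e\}$ free, so obtaining the clean sign-flip neighbor $X'$ requires careful iteration of (SE) exploiting $\uX=\uY$, and then an appeal to (FS). The remainder reduces smoothly to the gate formula from Lemma~\ref{lem:face-gated}.
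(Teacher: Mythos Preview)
The paper does not prove this lemma at all; it is quoted verbatim from~\cite[Proposition 8]{ChKnPh_CUOM} and used as a black box. So there is no in-paper proof to compare your attempt against.

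On the merits of your attempt: parts (i) and (iii) are correct and cleanly argued via the gate formula $X\circ T$ from Lemma~\ref{lem:face-gated}, and your derivation of $\uX=\uY$ from parallelism (using simplicity of $\M(X)$) is fine. The one genuine gap is the geodesic gallery in (ii), and unfortunately the fix you sketch does not work as stated. After the first application of (SE) at $e$ you obtain $Z$ with $Z_e=0$, but any subsequent application of (SE) to the pair $Z,X$ (or $Z,Y$, or $X,Z$) at some $f\in\Sep(X,Y)\setminus\{e\}$ forces the $e$-coordinate of the new covector to equal $(Z\circ X)_e=X_e$ (resp.\ $Y_e$), because $e\notin\Sep(Z,X)$ and (SE) fixes all coordinates outside the separator to the composition. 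So the ``iterate (SE) to clean up, then apply (FS)'' plan cannot produce a covector $Z'\le X$ with $Z'_e=0$ and $\underline{Z'}=\uX\setminus\{e\}$; each cleanup step destroys the zero at $e$.

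A workable route is different: set $V:=Z\circ(-X)\in\covectors$ (by (FS)); then $\underline{V}=\uX$, $V_e=Y_e$, and on every $f\in\Sep(X,Y)$ one has $V_f\in\{X_f,Y_f\}$, so $\Sep(X,V)$ and $\Sep(V,Y)$ partition $\Sep(X,Y)$ with $e\in\Sep(X,V)$. If both parts are strictly smaller than $|\Sep(X,Y)|$ you can recurse on $(X,V)$ and $(V,Y)$ and concatenate. The residual case $V=Y$ (equivalently $Z\le Y$) still needs a separate argument, and this is where the proof in the cited reference does additional work; your sketch does not cover it.
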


A covector $X\in \covectors$ of a COM $\M=(U,\covectors)$ \emph{maximally
shatters} a set $D\subseteq U$
if $[X]$ shatters $D$ but $[X]$ does not shatter any superset of $D$. We also
say that  $X\in \covectors$ \emph{locally maximally shatters} a set $D$ if $[X]$
shatters $D$ but $D$ is not shattered by $[X']$ for any covector $X'>X$.

%\newcounter{cptLemmaGalleries}
%\setcounter{cptLemmaGalleries}{\thelemma}
\begin{lemma}\label{lem:galleries}%\comment{mostly by~\cite[Proposition
%8]{ChKnPh_CUOM}}
Let $\M=(U,\covectors)$ be a COM and $X,Y\in \covectors$ (not necessarily distinct). Then: %For any two
%covectors $X,Y$ of a COM $\M=(U,\covectors)$ and a set $D\subset U$, the
%following properties hold:
	\begin{itemize}
		\item[(i)] if $[X]$ and $[Y]$ shatter $D$, then the projections
		$[X\circ Y]$ and
		$[Y\circ X]$ also shatter $D$;
        \item[(ii)] if $X\ne \pm Y$ and $[X]$ maximally shatters $D$ and $[Y]$ shatters $D$,
        then $[X\circ Y]=[X]$ and $\face(X)$ is not a facet of $\M$;
        \item[(iii)] if both $[X]$ and $[Y]$ shatter  $D$, then there exist
        covectors $X'\geq X,  Y'\geq Y$ such that $[X']$ and $[Y']$ both
        maximally shatter $D$, and $\face(X')$ and $\face(Y')$ are parallel. In particular,
        if $[X]$ shatters $D$, then there exists a covector $X'\geq X$ such that $[X']$ maximally
        shatters $D$.
     \end{itemize}
\end{lemma}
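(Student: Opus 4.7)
For part (i), the plan is to use Lemma~\ref{lem:face-gated}: the gate of any tope $T\in[Y]$ in the gated subgraph $[X]$ equals $X\circ T$, which belongs to $[X\circ Y]\subseteq[X]$ (since $Y\leq T$ implies $X\circ Y\leq X\circ T$). Because $[X]$ shatters $D$, each $e\in D$ has $E_e$ crossing $[X]$ and hence $e\notin\underline{X}$, so $(X\circ T)_e=T_e$ for every $e\in D$: the gating map preserves all $D$-coordinates. Given any $\sigma\in\{-1,+1\}^D$, shattering of $D$ by $[Y]$ supplies a tope $T\in[Y]$ with $T|_D=\sigma$, and then $X\circ T\in[X\circ Y]$ realizes $\sigma$. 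Swapping the roles of $X$ and $Y$ gives the same conclusion for $[Y\circ X]$.

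For part (ii), I first apply (i) so that $[X\circ Y]$ shatters $D$. By Lemma~\ref{minor-pc-minor} the subgraph $[X\circ Y]\subseteq[X]$ corresponds inside the OM $\M(X)$ to the face $\face(Y')$ for $Y':=Y|_{U\setminus\underline{X}}\in\covectors(\M(X))$. Since $[X]$ maximally shatters $D$, Lemma~\ref{lem:rankfunction} gives $\rk(\M(X))=\vcd([X])=|D|$. If $Y'\neq\mathbf{0}$ in $\M(X)$, then $Y'$ lies above some cocircuit $X^*$ of $\M(X)$, so $[Y']\subseteq[X^*]$ and by Lemma~\ref{lem:rankfunction} again, $\vcd([Y'])\leq\vcd([X^*])=\rk(\M(X))-1=|D|-1$, contradicting the shattering of $D$ by $[X\circ Y]$. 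Hence $Y'=\mathbf{0}$, i.e.\ $\underline{Y}\subseteq\underline{X}$, which forces $X\circ Y=X$ and $[X\circ Y]=[X]$. For the claim that $\face(X)$ is not a facet, I would exhibit a covector $Z\in\covectors$ strictly below $X$. If $Y\leq X$ take $Z:=Y\neq X$; otherwise $\Sep(X,Y)\neq\varnothing$, and I would use (FS) (whereby $Y\circ-X\in\covectors$, with support equal to $\underline{X}$) together with iterated applications of (SE) to eliminate each coordinate of $\Sep(X,Y\circ-X)$ and obtain such a $Z$.

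For part (iii), the plan is to lift $X$ and $Y$ to maximally shattering covectors and then invoke (ii). Starting from $X$: while $\vcd([X])>|D|$, Lemma~\ref{lem:shatterandcircuits} tells us $D$ is independent but not a basis of $\underline{\M(X)}$, so $D$ sits inside a matroid hyperplane whose complementary cocircuit $C^*$ is disjoint from $D$. Lift $C^*$ to a sign cocircuit $X^*$ of $\M(X)$; then $X\circ X^*\in\covectors(\M)$ strictly exceeds $X$, its face $[X\circ X^*]$ still shatters $D$ (since $D$ stays independent after deleting $C^*$), and the VC-dimension decreases by one. Iterating yields $X'\geq X$ with $\vcd([X'])=|D|$, hence $[X']$ maximally shatters $D$; construct $Y'$ analogously. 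Now apply (ii) to $(X',Y')$: if $X'\neq\pm Y'$, then $[X'\circ Y']=[X']$ forces $\underline{Y'}\subseteq\underline{X'}$, and the symmetric application gives $\underline{X'}\subseteq\underline{Y'}$, so $\underline{X'}=\underline{Y'}$ and $\face(X'),\face(Y')$ are parallel by Lemma~\ref{prop:projection}(ii); the case $X'=\pm Y'$ is immediate. The ``in particular'' statement follows by taking $Y:=X$ in the general statement.

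The main obstacle is the second half of (ii): axiom (SE) eliminates only one coordinate of the separator at a time and leaves the remaining ones unconstrained, so producing $Z<X$ explicitly inside $\covectors$ requires a careful induction that combines (SE) with (FS) and keeps all intermediate sign vectors in $\covectors$. Once this is settled, the VC-dimension bookkeeping inside $\M(X)$ delivers the first half of (ii), and part (iii) follows cleanly by combining the lifting step with (ii) and Lemma~\ref{prop:projection}.
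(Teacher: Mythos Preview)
Your arguments for (i) and the first half of (ii) match the paper's. Your route for (iii) differs: the paper first replaces $X,Y$ by covectors that \emph{locally} maximally shatter $D$ (no strictly larger covector still shatters $D$), uses (i) to conclude $X=X\circ Y$ and $Y=Y\circ X$ and hence that $\face(X),\face(Y)$ are parallel, and then separately argues that local maximality implies global maximality via a cocircuit argument inside $\M(X)\setminus(U\setminus(D\cup\{e\}))$. Your direct matroid lifting (finding a cocircuit of $\M(X)$ whose support misses $D$ and passing to the corresponding face) followed by two applications of the first half of (ii) is a valid alternative, modulo the minor notational slip that $X^*$ lives on $U\setminus\underline{X}$, so one should take the covector $W\in\face(X)$ with $W|_{X^0}=X^*$ rather than write ``$X\circ X^*$''.

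The real issue is the second half of (ii). The obstacle you flag for the bare (SE)/(FS) route is genuine, and your proposal does not actually resolve it: a single application of (SE) to $X$ and $Y\circ(-X)$ zeroes one separator coordinate but leaves the others unconstrained, and there is no evident induction that keeps you below $X$. But the obstacle is entirely avoidable by invoking Lemma~\ref{prop:projection}, which you already have at your disposal. Once $X\circ Y=X$ is established, part (iii) of that lemma says $\face(X)=\face(X\circ Y)$ and $\face(Y\circ X)$ are parallel, and part (ii) then supplies a geodesic gallery between them. If the gallery has length zero, then $X=Y\circ X$, which forces $Y\le X$; since $X\ne\pm Y$ this gives $\face(X)\subsetneq\face(Y)$, so $\face(X)$ is not a facet. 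If the gallery has positive length, then $\face(X)$ and the next face in the gallery are facets of a common face of $\M$, so again $\face(X)$ is not a facet. This is exactly the paper's argument, and it eliminates the need for any delicate (SE) induction.
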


\begin{proof}  \textbf{Property (i):} Since $[X]$ and $[Y]$ shatter $D$, for any sign
	vector
	$Z\in \{ \pm 1\}^D$ we can find two topes $T'\in [X]$ and $T''\in [Y]$,
	such that $T'_{|D}=Z=T''_{|D}$. Since $X\le T'$ and $Y\le T''$, from
	$T'_{|D}=Z=T''_{|D}$ we conclude that $(X\circ Y)_{|D}<Z$ and in $[X\circ
	Y]$ we can find a tope $T$ whose restriction to $D$ coincides with $Z$.
	This proves that $[X\circ Y]$ shatters $D$, establishing (i).
	
	\textbf{Property (ii):} If $[X]$ maximally shatters $D$, then
	$\vcd(X)=|D|=:d$. By property (i), $[X\circ Y]$ also shatters $D$. If
	$\face(X\circ Y)$ is a proper face of $\face(X)$, then we obtain a
	contradiction with Lemma~\ref{lem:rankfunction} applied to the OM $\M(X)$. Thus $\face(X\circ
	Y)=\face(X)$, showing that $X=X\circ Y$. This establishes the first
	assertion. By Lemma~\ref{prop:projection}, the faces $\face(X)$ and $\face(Y\circ X)$ are parallel
    and therefore are connected by a geodesic gallery
	$(\face(X)=\face(X_0),\face(X_1), \ldots, \face(X_k)=\face(Y\circ X))$.
	Then  either  $k=0$ and $\face(X)=\face(Y\circ X)$ holds or  $\face(X)$ and $\face(X_1)$ are facets of a common face of
	$\covectors$. In the first case, since $X\ne \pm Y$,   we conclude that $\face(X)$  is a
	proper face of $\face(Y)$, and thus is not a facet of $\M$.
	 In the second case, $\face(X)$ is not a facet of $\M$ either. This proves (ii).
	
	\textbf{Property (iii):} Let $d=|D|$. We can suppose that both $X$ and $Y$
	locally maximally shatter the set $D$. Indeed, if $D$ is shattered by a proper face
	$\face(X')$ of $\face(X)$, then we can replace the pair $X,Y$ by the pair
	$X',Y$ so that $[X']$ and $[Y]$ still shatter $D$. Thus $D$ is not
	shattered by any proper faces of $\face(X)$ and $\face(Y)$. Since by (i),
	$D$ is shattered by	$[X\circ Y]$ and $[Y\circ X]$, we conclude that
	$X=X\circ Y$ and $Y=Y\circ X$ and thus the faces $\face(X)$ and $\face(Y)$
	are parallel.
	
It remains to show that $[X]$ and $[Y]$ maximally shatter $D$. Suppose by
	way of contradiction that $[X]$ shatters a larger set $D':=D\cup \{ e\}$.
	Consider the OM $\M'=\M(X)\setminus (U\setminus D')$. Note that $\M'$ maximally shatters
	$D'$, i.e.,	$\vcd(\M')=d+1$. Since $[X]$
	shatters $D'$, the covectors of $\M'$ are $\{\pm 1,0\}^{D'}$. Let $X''$ be a cocircuit of $\M'$ with $\underline{X'}=\{e\}$.
	By Lemma~\ref{lem:rankfunction} applied to $\M'$, we conclude that $X''$
	has VC-dimension $d$. Hence, $X''$ must shatter the set $D$. By Lemma
	\ref{lem:deletecocircuits},	there is a cocircuit $X'$ of $\face(X)$ such
	that $X''=X'\setminus (U\setminus D')$. Since $X''$ shatters $D$, $X'$ also
	shatters $D$. Since $X<X'$, this contradicts our assumption that $X$
	locally maximally shatters $D$.	The second assertion follows by applying the first
    assertion with $Y=X$.  This establishes (iii).
\end{proof}

\subsection{Realizable and full samples as convex subgraphs}
Let $\M=(U,\covectors)$, where $\covectors\subset \{\pm 1, 0\}^U$ is a system of sign vectors whose topes
$\topes$ induce an isometric subgraph $G$ of $\Q(U)$. We denote by
$\Sam(\M)=\Sam(\covectors)=\bigcup_{X\in \covectors}\{ S\in \{\pm 1, 0\}^U: S\leq X\}$
the \emph{set of realizable samples} for $\M$ (this is
called the \emph{polar complex} in neural codes~\cite{ItKuRo}). Since for any $X\in
\covectors$ there exists $T\in \topes$ such that $X\leq T$, we have
$\Sam(\M)=\Sam(\topes)$, see Figure~\ref{fig:treillis}.

%
%\begin{lemma} \label{lem:convex-samples0}  $\hS\in \Sam(\face(X))$.
%\end{lemma}
%
%\begin{proof} Since  $S\in \Sam(\covectors)$, there exists $Y\in \covectors$ such that $S\le Y$.
%Then $\hS=X\circ S\leq X\circ Y$. Since $X\circ Y\in F(X)$, we get $X\circ S\in \Sam(\face(X))$.
%\end{proof}

\begin{figure}[htb]
	\centering
	\includegraphics[width=0.8\linewidth]{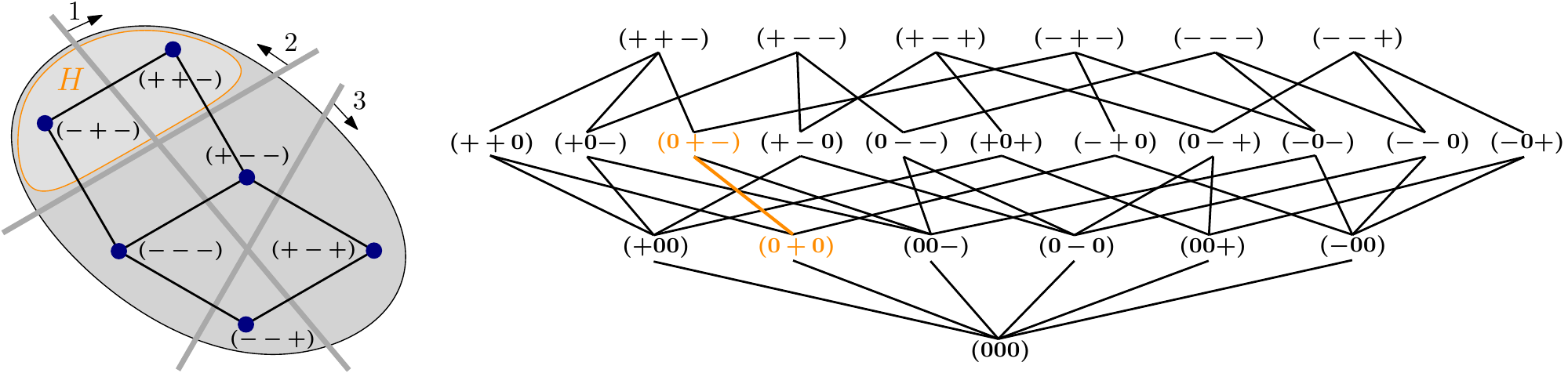}
	\caption{Left: the tope graph $G$ of the pc-restriction
	$\M$ of the COM from Figure~\ref{fig:COMrealisable} to $\{ 1,2,3\}$ and a
		convex subgraph $H$ of $G$. Right:  the realizable samples
		of $\M$ and the interval $I(H)$ (in orange).
		\label{fig:treillis}
	}
\end{figure}

Extending the notation for covectors and their faces, for a sample $S\in \Sam(\M)$
we set   $\face(S)=\{ X\in
\covectors: S\leq X\}$ and let $[S]$ be the subgraph of $G$ induced by all
topes $T\in \covectors$ from $\face(S)$. For OMs, the set $\face(S)$ is called
a \emph{supertope} in~\cite{Hochstattler}. For COMs,  $\face(S)$ is called the
\emph{fiber} of $S$ and it is known that they are COMs~\cite{BaChKn}.
Since for any  $S\in \Sam(\M)$ there exists $T\in \topes$ such that $S\leq T$,
$[S]\neq\varnothing$. Moreover, $[S]$ is the intersection of halfspaces of $G$ of
the form $G^+_e$ if $S_e=+1$ and $G^-_e$ if $S_e=-1$. Hence, $[S]$ is a
nonempty convex subgraph of $G$ for all $S\in \Sam(\M)$. %If

Any convex subgraph $H$ of a partial cube $G$ is the intersection of all
halfspaces of $G$ containing $H$.
Similarly to the fact that any polytope $P$ in Euclidean space is the intersection
of the halfspaces defined by its facet-defining  hyperplanes, any convex set $H$
in a partial cube is the intersection of the halfspaces defined by the $\Theta$-classes in $\osc(H)$.
Both for $P$ and for $H$, this is a minimal representation as the intersection of halfspaces.
However, $H$ can be represented in different ways as the intersection of halfspaces. Indeed, any
representation of $H$ as an intersection of halfspaces of $G$ yields a
realizable sample $S$, where $S_e=\pm 1$ if $G_e^{\pm}$ participates in the
representation and $S_e=0$ otherwise. Notice that the $\Theta$-classes
osculating with $H$ have to be part of every representation of $H$ and the
$\Theta$-classes crossing $H$ take part in no representation of $H$.
This leads
to two canonical representations of $H$, one using only the halfspaces whose
$\Theta$-class osculates with $H$ and one using all halfspaces containing $H$:

\begin{minipage}{0.52\linewidth}
	$$(S_\bot)_e=\begin{cases}
	-1 & \text{ if } e\in\osc(H) \text{ and } H\subseteq G_e^-,\\
	+1 & \text{ if } e\in\osc(H) \text{ and } H\subseteq G_e^+,\\
	0 & \text{ otherwise.}
	\end{cases}
	$$
\end{minipage}
\hfill
\begin{minipage}{0.02\linewidth}
	and
\end{minipage}
\hfill
\begin{minipage}{0.40\linewidth}
	$$(S^\top)_e=\begin{cases}
	-1 & \text{ if } H\subseteq G_e^-,\\
	+1 & \text{ if } H\subseteq G_e^+,\\
	0 & \text{ otherwise}.
	\end{cases}
	$$
\end{minipage}

Note that $(S^\top)^0=\cross(H)$ and  $(S_\bot)^0=U\setminus \osc(H)$, i.e.,
$(S_\bot)^0$ consists of all $e$ such that $E_e$ crosses or is disjoint from
$H$. If $S$ is a sample arising from the representation of $H$ as the
intersection of halfspaces, then $S_\bot\le S\le S^\top$. Moreover, any sample
$S$ from the order interval $I(H):=[S_\bot,S^\top]$ arises from a
representation of $H$, i.e., $[S]=[S_\bot]=[S^\top]=H$. Thus, for any convex
subgraph $H$ of $G$ the set of all $S\in \Sam(\M)$ such that $[S]=H$
is an interval $I(H)=[S_\bot,S^\top]$ of  $(\Sam(\M),\leq)$.  Note
that the intervals $I(H)$ partition $\Sam(\M)$. See Figure~\ref{fig:treillis} for an illustration of the above. Moreover:

%\newcounter{cptLemmaInterval}
%\setcounter{cptLemmaInterval}{\thelemma}
%\begin{lemma} \label {interval} %Let $\covectors\subseteq \{-1,0,+1\}^U$ be a
%%system of sign vectors whose set of topes induces an isometric subgraph $G$
%%of
%%the hypercube $\{-1,+1\}^U$. Then
%For any convex subgraph $H$ of $G$, the set of all $S\in \downarr\covectors$
%such that $[S]=H$ is an interval
%$I(H)=[S_\bot,S^\top]$ of  $(\downarr\covectors,\leq)$. Moreover,  the
%intervals  $I(H)$ define a partition of $\downarr\covectors$.
%\end{lemma}

%We also use the following properties of $I(H)$ and $[S]$:
%The following properties follow from the definitions of $I(H)$ and $[S]$:

%\newcounter{cptLemmaConvexsamplesOne}
%\setcounter{cptLemmaConvexsamplesOne}{\thelemma}
\begin{lemma}\label{lem:convex-samples1}  If $S,S'\in \Sam(\covectors)$ and
$S\le S'$, then $[S']\subseteq [S]$. %If $H,H'$ are two convex subgraphs of
%$G$,
%$H\subset H'$, and $S'\in I(H')$, then there exists $S\in I(H)$ such that
%$S'\le S$.
\end{lemma}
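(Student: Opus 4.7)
The plan is to prove this directly from the definitions using the transitivity of the product order $\leq$ on sign vectors. Recall that $[S']$ is defined as the subgraph induced by the topes $T \in \topes$ satisfying $S' \leq T$, and similarly $[S]$ consists of topes $T$ with $S \leq T$. So the claim amounts to: every tope above $S'$ in the partial order is above $S$.

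First I would fix an arbitrary tope $T$ in $[S']$, so $S' \leq T$ by definition. Combined with the hypothesis $S \leq S'$, transitivity of the product order $\leq$ on $\{\pm 1, 0\}^U$ immediately yields $S \leq T$. Hence $T \in [S]$, which is exactly the required inclusion $[S'] \subseteq [S]$.

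The only subtle point, which I would state explicitly for clarity, is the interpretation of the product order: $S \leq S'$ means that $\uS \subseteq \underline{S'}$ and $S_e = S'_e$ for all $e \in \uS$. Since every nonzero coordinate of $S$ is already constrained to match $S'$, and every nonzero coordinate of $S'$ is constrained to match $T$ for $T \in [S']$, the restrictions propagate downward to $S$. There is no real obstacle here; the statement is a direct consequence of the transitivity of $\leq$, and the lemma is included essentially to record a convenient monotonicity property of the map $S \mapsto [S]$ that will be used later in the paper.
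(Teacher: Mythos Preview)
Your proof is correct and is essentially the only reasonable approach: the statement is an immediate consequence of the transitivity of the product order $\leq$. The paper in fact states this lemma without proof, treating it as self-evident, so your argument simply spells out the one-line justification.
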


%We will also use the following lemma:

%\subsection{Full convex sets and full samples}
%As above, $\covectors\subset \{-1,0,+1\}^U$ is a system of sign vectors whose
%set of topes $\topes$ is an isometric subgraph $G$  $\Q(U)$. Now
%Let now  $\vcd(G)=d$.

\begin{definition} [Full samples] \label{def:full}  We say that a  realizable sample $S\in \Sam(\M)$ is \emph{full} if the pc-minor
$G'=\pi_{S^0}(G(\M))$  obtained from $G(\M)$ by contracting the $\Theta$-classes of $S^0$
has VC-dimension $d=\vcd(\M)$. Let  $\Sam_f(\M)$ denote the set of all
full samples of $\M$.
\end{definition}

Note that all topes of $\M$ are full
samples since their
zero set is empty. A convex subgraph $H$ of $G$ is \emph{full} if the sample $S_{\bot}$
is full, where recall $I(H)=[S_{\bot},S^{\top}]$. %\in \downarr\covectors_f$. %, i.e., the VC-dimension of
%%$G'=\pi_{U\setminus \osc(H)}(G)$ obtained from $G$ by contracting the
%%$\Theta$-classes which
% cross $H$ or are disjoint from $H$ is $d$.
%Note that the
The image of $H$ in $G'$ (obtained from $G$ by contracting the $\Theta$-classes of $(S_\bot)^0=U\setminus \osc(H)$) is a single vertex $v_H$ and its degree  is
$|\osc(H)|$. %, moreover, each $\Theta$-class of $\osc(H)$ has a single edge
%incident to $v_H$ and two distinct edges incident to $v_H$ belong to distinct
%$\Theta$-classes of $\osc(H)$. %This justifies the name ``full'' for such
%%convex sets $H$.
If $D\subset \osc(v_H)=\osc(H)$ of size $d$ is shattered by
$G'$, since $G'$ is a pc-minor of $G$,  $D$ is also shattered by $G$. Hence, a
convex set $H$ of $G$ is full if and only if $G$ shatters a subset  $D$ of
$\osc(H)$ of size $d=\vcd(G)$. Since for any $S\in I(H)$ we have $(S^\top)^0\subseteq S^0\subseteq (S_\bot)^0$, if $H$ is a full convex subgraph of a COM, then all samples
in $I(H)$ are full. However, if $S$ is a full sample and $H=[S]$, then not necessarily all samples from $I(H)$ are full:

\begin{example}\label{example:full} Let $\M=(U,\covectors)$ be the COM  with $\vcdim(\M)=2$ defined on  $U=\{1,2,3,4,5\}$ and whose tope graph consists of one edge on each of whose ends there is a pending 4-cycle. Formally, $\M$ has
$(-,-,-,-,-), (+,-,-,-,-),(-,+,-,-,-),(+,+,-,-,-),(+,+,+,-,-),(+,+,+,+,-),$ $(+,+,+,-,+),(+,+,+,+,+)$  as topes. The two 4-cycles are the convex sets $H_1=[S_1]$ and $H_2=[S_2]$ defined by the samples  $S_1=(0,0,-,-,-)$ and  $S_2=(+,+,+,0,0)$, while  the middle edge is the convex set
$H_3=[S_3]$ where $S_3=(+,+,0,-,-)$. Consider the samples $S_\bot$ and $S^\top$ for the convex set $H_1$:  $S_{\bot}=(0,0,-,0,0)$ and $S^{\top}=(0,0,-,-,-)=S_1$. Notice that the sample $S_1=S^{\top}$ is full since contracting $S^0_1=\{ 1,2\}$ does not affect the other 4-cycle $H_2$.
However, the convex set $H_1=[S_1]$ is not full because  the sample
$S_{\bot}$ is not full:  contracting $(S_{\bot})^0=\{ 1,2,4,5\}$,
both 4-cycles will be contracted, thus the VC-dimension will decrease. Morally,
being full is a local property in a COM.
\end{example}

We show next, that this problem does not arise in OMs.

\begin{lemma} \label{lem:full-sample-OM}
	Let $\M=(U,\covectors)$ be an OM of rank $d$ and let $G=G(\M)$ be its tope
	graph. A sample $S\in \Sam(\M)$ is full if and only if the convex
	subgraph $[S]$ is full.
	%Hence, $I_f(H)=I(H)$ for each full convex subgraph $H$  and
	%$I_f(H)=\varnothing$ for each non-full convex subgraph $H$ of $G$.
\end{lemma}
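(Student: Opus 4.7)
The plan is to prove the non-trivial direction ($S$ full $\Rightarrow$ $[S]$ full) by induction on $|B|$, where $B := (S_\bot)^0 \setminus S^0$ and $H := [S]$. The forward direction is immediate: $S_\bot \le S$ gives $S^0 \subseteq (S_\bot)^0$, so $\pi_{(S_\bot)^0}(G)$ is a pc-contraction of $\pi_{S^0}(G)$ and VC-dimension is monotone non-increasing under pc-contractions.

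For the reverse direction, since $\cross(H) = (S^\top)^0 \subseteq S^0$ and $(S_\bot)^0 = \cross(H) \cup \disj(H)$, we have $B \subseteq \disj(H)$ with $S_e \ne 0$ for every $e \in B$. The base case $|B|=0$ is trivial. For the inductive step, pick $e \in B$ and let $S'$ agree with $S$ except that $S'_e = 0$, so $(S')^0 = S^0 \cup \{e\}$ and $|B(S')| = |B| - 1$. I first verify $[S'] = H$: any tope $T^* \in [S'] \setminus H$ would satisfy $T^*_e = -S_e$ while agreeing with every $T \in H$ on $\underline{S} \setminus \{e\} \supseteq \osc(H)$. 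But any shortest path in $G$ from $T$ to $T^*$ must leave $H$ via some edge, and its $\Theta$-class cannot lie in $\cross(H)$ (convexity of $H$ forces both endpoints of incident $\cross(H)$-edges to lie in $H$) nor in $\disj(H)$ (no edge of a $\disj(H)$-class is incident to $H$); hence the class lies in $\osc(H) \cap \Sep(T, T^*) = \varnothing$, a contradiction.

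The crux is to show $S'$ is full, i.e., that $e$ is not a coloop of the underlying matroid of the rank-$d$ OM $\M \setminus S^0$. Assume otherwise: then a sign vector of support $\{e\}$ is a cocircuit of $\M \setminus S^0$, and by (Sym) both signed versions $\pm\delta_e$ are cocircuits. Composing $\delta_e^{-T'_e}$ with any tope $T'$ of $\M \setminus S^0$ yields its $e$-flip as another tope, so every tope of $\pi_{S^0}(G)$ has an $E_e$-neighbor. Take $T \in H$, set $T' = \pi_{S^0}(T)$, and lift the $E_e$-edge at $T'$ to an $E_e$-edge $\tilde T - \tilde T''$ in $G$ with $\pi_{S^0}(\tilde T) = T'$. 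Since $\tilde T$ and $T$ differ only on coordinates in $S^0$ and $\underline{S} \subseteq U \setminus S^0$, $\tilde T$ agrees with $T$ on $\underline{S}$, giving $\tilde T \in [S] = H$; yet $\tilde T''_e = -T_e = -S_e$ places $\tilde T''$ in the halfspace of $E_e$ opposite to $H$, so $\tilde T'' \notin H$. Hence $E_e$ osculates $H$, contradicting $e \in \disj(H)$. So $S'$ is full, the induction hypothesis applies, and $[S'] = H$ is full. The main obstacle is this coloop-to-osculation argument: translating the matroidal coloop condition into the tope-graph statement that every tope acquires an $E_e$-neighbor after contracting $S^0$, and then carefully lifting the edge back to $G$ while exploiting that $\underline{S} \subseteq U \setminus S^0$ is preserved under the equivalence.
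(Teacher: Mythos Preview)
Your argument is correct, though there is a minor labeling slip: what you call the ``forward direction'' (dispatched via monotonicity of VC-dimension under pc-contraction) is actually the implication $[S]$ full $\Rightarrow$ $S$ full, i.e., the converse of the iff as stated. The mathematics is unaffected.

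Your approach differs genuinely from the paper's. The paper contracts $\cross(H)$ first, obtaining an OM $\M'=\M\setminus\cross(H)$ of rank $d$ in which $H$ collapses to a single tope $v_H$; it then invokes Lemma~\ref{lem:localshattering} (local shattering at topes) to produce a shattered set $D\subseteq\osc(v_H)=\osc(H)$ of size $d$, which survives the further deletion of $(S_\bot)^0$. Your proof instead peels off the coordinates of $B=(S_\bot)^0\setminus S^0\subseteq\disj(H)$ one at a time, showing at each step that the rank cannot drop because a coloop at $e$ would force every tope of $\M\setminus S^0$ to have an $E_e$-neighbor, which after lifting would give an $E_e$-edge incident to $H$, contradicting $e\in\disj(H)$. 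Your route is more elementary and self-contained---it avoids the local shattering lemma and uses only the matroidal fact that rank drops under deletion of $e$ iff $e$ is a coloop, plus (Sym) and composition. The paper's route is shorter and more conceptual once Lemma~\ref{lem:localshattering} is available, and makes the role of $\osc(H)$ as the ``visible'' part of the shattering explicit.
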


\begin{proof} First notice that since in OMs the rank and the VC-dimension are equal, a
	sample $S$ is full if and only if $\rk(\M\setminus S^0)=d$.
	Let $S$ be a full sample, $H=[S]$, and recall that $(S_{\bot})^0$ equals $\cross(H)$ plus
    the $\Theta$-classes not osculating with $H$. We have to show that $\M\setminus (S_{\bot})^0$
    has rank $d$. First, let $\M'=\M\setminus \cross(H)$ and let
	$G'=\pi_{\cross(H)}(G)$ be its tope graph. Since $\cross(H)\subseteq S^0$
	and $S$ is full, $\rk(\M')=d$ and hence $\vcd(G')=d$. The image of $H$ in
	$G'$ is a single vertex $v_H$. By Lemma~\ref{lem:oscilate},
	$\osc(v_H)=\osc(H)$. By Lemma~\ref{lem:localshattering},  $\osc(v_H)$
	contains a subset of size $d$ shattered by $\M'$. Since $\osc(v_H)\cap (S_{\bot})^0=\varnothing$ we conclude that $H$ is full.
	Conversely, if $H=[S]$ is a convex subgraph of $G$ that is full, then from
    the discussion preceding Example~\ref{example:full}
	we deduce that all samples from $I(H)$ (and in particular, $S$) are full.
	% Then the sample
	%$S_\bot:=S_\bot(H)$ is  full. Since $\uS_\bot=\osc(H)\subseteq \uS$
	%for any $S\in I(H)=[S_\bot,S^\top]$, we get $S^0\subseteq (S_\bot)^0$,
	%thus $\rk(\M\setminus S^0)\geq \rk(\M\setminus (S_\bot)^0)=d=\rk(\M)$.
	%Hence, $\rk(\M\setminus S^0)=d$, i.e., $S$ is a full sample.
	%
%	
%
%First notice that since in OMs the rank and the VC-dimension are
%	equal, a sample $S$ is full if and only if $\rk(\M\setminus S^0)=d$.
%	
%	First suppose that the convex subgraph $H$ is full. Then the sample
%	$S_\bot:=S_\bot(H)$ is  full. Since $\uS_\bot=\osc(H)\subseteq \uS$
%	for any $S\in I(H)=[S_\bot,S^\top]$, we get $S^0\subset (S_\bot)^0$,
%	thus $\rk(\M\setminus S^0)\geq \rk(\M\setminus (S_\bot)^0)=d=\rk(\M)$.
%	Hence, $\rk(\M\setminus S^0)=d$, i.e., $S$ is a full sample.
%	
%	Conversely, let $S$ be a full sample and we assert that $H=[S]$ is a full
%	convex subgraph. Let $\M'=\M\setminus \cross(H)$ and let
%	$G'=\pi_{\cross(H)}(G)$ be its tope graph. Since $\cross(H)\subseteq S^0$
%	and $S$ is full, $\rk(\M')=d$ and hence $\vcd(G')=d$. %Since $\M$ is
%	%simple, $\M'$ is also simple.
%	The image of $H$ in $G'$ is a single vertex $v_H$.
%	By Lemma~\ref{lem:oscilate}, $\osc(v_H)=\osc(H)$. By Lemma
%	\ref{lem:localshattering},  $\osc(v_H)$ contains a subset of size $d$
%	shattered by $\M'$, whence $H$ is full.  %it suffices to prove that
%	%$\osc(v_H)$ contains a subset of size $d$ shattered by $\M'$.
%	%This is an immediate consequence of Lemma~\ref{lem:localshattering}.
\end{proof}

\subsection{The samples $\hS$ and $\hhS$}

For a covector $X\in\covectors$ of a COM $\M=(U,\covectors)$, let $\Sam(\face(X))$ denote the samples of OM $(U,\face(X))$, i.e.,
$\Sam(\face(X))=\bigcup_{Y\in \face(X)}\{ S\in \{\pm 1,0\}^U: S\leq Y\}$. Clearly $\Sam(\face(X))\subseteq \Sam(\covectors)$.
We also denote by $\Sam(\M(X))$ the samples of the simple OM $\M(X)=(U\setminus \uX, F(X)\setminus \uX)$, i.e.,
$\Sam(\M(X))=\bigcup_{Y\in \face(X)\setminus \uX}\{ S\in \{\pm 1,0\}^{U\setminus\uX}: S\leq Y\}$. Finally, denote by $\Sam_f(\face(X))$ the set of
full samples from  $\Sam(\face(X))$ and by $\Sam_f(\M(X))$ the set of full samples from $\Sam(\M(X))$.

As we noticed already, $F(X)$ is an OM, however it is not simple. Since all covectors from $F(X)$ have the values as $X$ on the
coordinates of $\uX$, from the definition of $\M(X)$ we conclude that $F(X)=(F(X)\setminus \uX)\times X_{|\uX}$.
This establishes a one-to-one correspondence $\varphi_X$ between the covectors of $F(X)$ and the covectors of $\M(X)$ and between the topes of $\face(X)$ and the topes
of $\M(X)$. Recall that by Lemma \ref{minor-pc-minor}  the tope graph $[X]$ of $F(X)$ is isomorphic to the tope graph $G(\M(X))$ of $\M(X)$.
Therefore, $[X]$ and $G(\M(X))$ have the same sets of convex subgraphs. This one-to-one correspondence $\varphi_X$ also shows that  any $S\in \Sam(\face(X))$ has the form
$S=S'\times S''$, where $S'\in \Sam(\M(X))$ and $S''\in \{\pm 1,0\}^{\uX}$ such that $S''\le X_{|\uX}$.

The following samples $\hS$ and $\hhS$ will be important in what follows:

\begin{definition} [$\hS$ and $\hhS$] \label{def:hS} For a sample $S\in \Sam(\covectors)$ and a covector $X\in \covectors$,
set $\hS:=X\circ S$ and $\hhS:=\hS\setminus\uX=S\setminus \uX$, where it will usually be clear which covector $X\in \covectors$ we are referring to.
\end{definition}

From the definition it immediately follows that $\hS$ and $\hhS$ have the same zero sets: $\hS^0=\hhS^0$. We continue with the following properties of $\hS$ and $\hhS$:

%\newcounter{cptLemmaConvexsamplesTwo}
%\setcounter{cptLemmaConvexsamplesTwo}{\thelemma}
\begin{lemma} \label{lem:convex-samples2}  Let $X\in \covectors,$   $S\in
\Sam(\M)$,  and $\Sep(X,S)=\varnothing$. %$[S]\cap [X]\ne \varnothing$, %such that $\Sep(X,S)=\varnothing$,
Then  $\hS\in \Sam(\face(X)), \hhS\in \Sam(\M(X))$, the convex subgraphs $[\hS]$ of $[X]$ and $[\hhS]$ of $G(\M(X))$ are $U$-isomorphic, and
$[\hS]=[X]\cap [S]\ne \varnothing$.
\end{lemma}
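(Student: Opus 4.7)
The plan is to handle the four conclusions in the natural order: first locate $\hS$ inside $\Sam(\face(X))$, then push this down to $\Sam(\M(X))$, then verify the set equality $[\hS]=[X]\cap [S]$ (which in particular gives non-emptiness), and finally deduce the $U$-isomorphism with $[\hhS]$ by restricting the isomorphism from Lemma~\ref{minor-pc-minor}.

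First I would pick a tope $T'\in \topes$ with $S\le T'$, which exists since $S\in \Sam(\M)$, and form $Y:=X\circ T'$. By construction $Y$ is a tope of $\M$ and $X\le Y$, so $Y\in [X]\subseteq\face(X)$. To see $\hS\le Y$, note that both $\hS$ and $Y$ agree with $X$ on $\uX$, while on $U\setminus\uX$ we have $\hS_e=S_e\le T'_e=Y_e$. Hence $\hS\in\Sam(\face(X))$. Applying the projection $Z\mapsto Z\setminus\uX$, which by the discussion preceding Definition~\ref{def:hS} induces an isomorphism $\varphi_X$ between $\face(X)$ and $\M(X)$ (and between $\Sam(\face(X))$ and $\Sam(\M(X))$), we obtain $\hhS=\hS\setminus\uX\in\Sam(\M(X))$ via $Y\setminus\uX$ as the witness tope.

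Next I would prove $[\hS]=[X]\cap[S]$ by double inclusion, using crucially that $\Sep(X,S)=\varnothing$. For the inclusion $\subseteq$, take a tope $T\ge\hS=X\circ S$. On $\uX$ we have $T_e=X_e$, so $T\in[X]$; on $U\setminus\uX$ we have $T_e\ge S_e$, while on $\uX\cap\uS$ the hypothesis $\Sep(X,S)=\varnothing$ gives $S_e=X_e=T_e$, and on $\uX\setminus\uS$ we trivially have $S_e=0\le T_e$, so $S\le T$ and $T\in[S]$. Conversely, if $T$ is a tope with $X\le T$ and $S\le T$, then $T_e=X_e=(X\circ S)_e$ for $e\in\uX$ and $T_e\ge S_e=(X\circ S)_e$ for $e\notin\uX$, so $T\ge\hS$. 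The tope $Y$ constructed above lies in this intersection (by checking the two sign regimes exactly as in the first paragraph), establishing $[\hS]\ne\varnothing$.

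Finally, for the $U$-isomorphism between $[\hS]\subseteq[X]$ and $[\hhS]\subseteq G(\M(X))$, I would invoke Lemma~\ref{minor-pc-minor}: the map $T\mapsto T\setminus\uX$ is a $U$-isomorphism from $[X]$ onto $G(\M(X))$. Restricting it to $[\hS]$, a tope $T\in[X]$ satisfies $T\ge\hS$ if and only if $T\setminus\uX\ge\hS\setminus\uX=\hhS$ (the conditions on $\uX$ are automatic since both $T$ and $\hS$ coincide with $X$ there), so the image of $[\hS]$ is exactly $[\hhS]$. No step is a real obstacle; the only item requiring care is the use of $\Sep(X,S)=\varnothing$ in the inclusion $[\hS]\subseteq[S]$, since without it $\hS=X\circ S$ would overwrite sign disagreements between $X$ and $S$ and the bound $S\le T$ on $\uX\cap\uS$ would fail.
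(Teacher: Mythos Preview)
Your proof is correct and follows essentially the same approach as the paper's: both pick a covector above $S$, compose with $X$ to obtain a witness in $\face(X)$, establish $[\hS]=[X]\cap[S]$ by checking coordinates on $\uX$ versus $U\setminus\uX$ (the paper phrases the inclusion $[\hS]\subseteq[S]$ as a proof by contradiction, but the underlying case split is identical), and derive the $U$-isomorphism from the bijection $\varphi_X$ of Lemma~\ref{minor-pc-minor}. Your treatment is in fact slightly more explicit on the $U$-isomorphism step than the paper's.
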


%\newcounter{cptLemmaConvexsamplesTwo}
%\setcounter{cptLemmaConvexsamplesTwo}{\thelemma}
%\begin{lemma} \label{lem:convex-samples2}  If $X\in \covectors, S\in
%\Sam(\covectors)$ such that $\Sep(X,S)=\varnothing$, then  $\hS=X\circ S$ is the unique sample
%(of $\Sam(\face(X))$ and of $\Sam(\covectors)$) such that $[\hS]=[X]\cap [S]$.
%\end{lemma}

\begin{proof} Since  $S\in \Sam(\M)$, there exists $Y\in \covectors$ such that $S\le Y$.
Then $\hS=X\circ S\leq X\circ Y$. Since $X\circ Y\in \face(X)$, we get $X\circ S\in \Sam(\face(X))$.
Since $(X\circ Y)\setminus \uX\in \M(X)$ and $\hhS=\hS\setminus \uX\le (X\circ Y)\setminus \uX$, we also deduce that
$\hhS\in \Sam(\M(X))$. From the definition of the convex subgraphs $[\hS]$ and $[\hhS]$ and the way how the bijection $\varphi_X$  between the topes
of $[X]$ and the topes of $G(\M(X))$ is defined, we conclude that the convex subgraphs $[\hhS]$ and $[\hS]$ are $U$-isomorphic.

Now, we prove that $[\hS]=[X]\cap [S]\ne \varnothing$.
Since $\hS=X\circ S$, we have $X\le \hS$ and by Lemma
	\ref{lem:convex-samples1} we have $[\hS]\subseteq [X]$. Now we prove that
	$[\hS]\subseteq [S]$. Indeed, otherwise there exists a tope $T$ of
	$\covectors$ such that $T\in [\hS]\setminus [S]$.
	This implies that $\hS\le T$ and there exists an element $e\in U$ such that
	$T_e\ne S_e\ne 0$. If $X_e=0$, then $\hS_e=(X\circ S)_e=S_e\ne 0$.
    Since $\hS\le T$ this implies that $S_e=\hS_e=T_e$, a contradiction with
    the
    choice of $e$.
    Otherwise, if $X_e\ne 0$, then $\hS_e=(X\circ S)_e=X_e$. This implies that $T_e=X_e$, which
	is impossible because $T_e\ne S_e\ne 0$ and we have $\Sep(X,S)=\varnothing$. This proves that
	$[\hS]\subseteq [X]\cap [S]$. Consequently, $[X]\cap [S]\ne \varnothing$. To prove the converse inclusion $[X]\cap [S]\subseteq [\hS]$ pick any tope
	$T$ of $\covectors$ belonging to $[X]\cap [S]$. Then $X\le T$ and $S\le T$ and thus $\hS=X\circ S\le T$.
	This implies that $T\in  [\hS]$ and we are done.
	%Suppose by way of contradiction that $T\notin [\hS]$, i.e., $\hS\nless T$.
	%Then there exists $e\in U$ such that $\hS_e\ne 0$ and $\hS_e\ne T_e$, say
	%$\hS_e=-1$ and $T_e=+1$. Since $\hS=X\circ S$, the equality  $\hS_e=-1$
	%implies that either $X_e=-1$ or that $X_e=0$ and $S_e=-1$. Since $T_e=+1$,
	%in the first case we get a contradiction with $X\le T$ and in the second case
	%we get a contradiction with $S\le T$. Hence, $[X]\cap [S]\subseteq [\hS]$ and
	%we are done.
%
%	Since $\hS=X\circ S$, we have $X\le \hS$ and by Lemma
%	\ref{lem:convex-samples1} we have $[\hS]\subseteq [X]$. Now we prove that
%	$[\hS]\subseteq [S]$. Indeed, otherwise there exists a tope $T$ of
%	$\covectors$ such that $T\in [\hS]\setminus [S]$.
%	This implies that $\hS<T$ and there exists an element $e\in U$ such that
%	$T_e\ne S_e\ne 0$. Since $\hS=X\circ S$, this implies that $X_e=T_e$, which
%	is impossible because $\Sep(X,S)=\varnothing$. This proves that
%	$[\hS]\subseteq	[X]\cap [S]$.
%	
%	To prove the converse inclusion $[X]\cap [S]\subseteq [\hS]$ pick any tope
%	$T$ of $\covectors$ belonging to $[X]\cap [S]$. Then $X<T$ and $S<T$.
%	Suppose by way of contradiction that $T\notin [\hS]$, i.e., $\hS\nless T$.
%	Then there exists $e\in U$ such that $\hS_e\ne 0$ and $\hS_e\ne T_e$, say
%	$\hS_e=-1$ and $T_e=+1$. Since $\hS=X\circ S$, the equality  $\hS_e=-1$
%	implies that either	$X_e=-1$ or that $X_e=0$ and $S_e=-1$. Since
%	$T_e=+1$, in the first case we get a contradiction with
%	$X<T$ and in the second case we get a contradiction with $S<T$. Hence,
%	$[X]\cap [S]\subseteq [\hS]$ and we are done.
\end{proof}

%\newcounter{cptLemmaConvexsamplesTwo}
%\setcounter{cptLemmaConvexsamplesTwo}{\thelemma}
\begin{lemma} \label{lem:convex-samples3}  Let $X\in \covectors, S\in
\Sam(\M)$, and $\Sep(X,S)=\varnothing$. Then: %$[S]\cap [X]\ne \varnothing$,

\begin{itemize}
\item[(i)] $\osc([\hhS])=\osc([\hS])=\osc([S])\cap X^0$, where $\osc([\hS])$ is considered in $[X]$ and $\osc([\hhS])$ in $G(\M(X))$; %=\face(X)$.
\item[(ii)] $\hhS^0=\hS^0=S^0\cap X^0$.
\end{itemize}
\end{lemma}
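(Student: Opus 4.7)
My plan is to verify part (ii) directly from the definitions and then to reduce part (i) to a gate-map translation between $[S]$ and $[\hS]$. For part (ii) I would simply unpack $\hS=X\circ S$: since $(X\circ S)_e$ equals $X_e$ when $X_e\neq 0$ and equals $S_e$ when $X_e=0$, the zero set of $\hS$ is exactly $X^0\cap S^0$. Restricting to $X^0=U\setminus\uX$ preserves this zero set, so $\hhS^0=X^0\cap S^0$ as well.

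For part (i), I would first deduce $\osc([\hS])=\osc([\hhS])$ directly from the $U$-isomorphism between $[\hS]\subseteq [X]$ and $[\hhS]\subseteq G(\M(X))$ supplied by Lemma~\ref{lem:convex-samples2}: any $U$-isomorphism preserves $\Theta$-classes together with the respective ambient subgraphs, and therefore preserves both crossing and osculating $\Theta$-classes. The inclusion $\osc([\hS])\subseteq X^0$ is automatic because the $\Theta$-classes of the ambient graph $[X]$ are exactly those indexed by $X^0=U\setminus\uX$.

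The core step is the equality $\osc([\hS])=\osc([S])\cap X^0$, and here I would rely on two tools from the preceding lemmas: the identification $[\hS]=[X]\cap[S]$ (Lemma~\ref{lem:convex-samples2}) and the gate formula $T\mapsto X\circ T$ for the gate of a tope $T$ in $[X]$ (Lemma~\ref{lem:face-gated}). The pivotal observation, which uses $\Sep(X,S)=\varnothing$, is that $T\in [S]$ forces $X\circ T\in [S]$, hence $X\circ T\in [\hS]$; this follows by checking that each coordinate $f$ with $S_f\neq 0$ is fixed, either because $X_f=0$ (so $(X\circ T)_f=T_f=S_f$) or because $X_f\neq 0$ and the no-separation hypothesis gives $X_f=S_f$. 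Because $X_e=0$ for $e\in X^0$, the gate map also preserves the $e$-th coordinate, so it carries edges of $E_e$ in $G(\M)$ to edges of $E_e$ inside $[X]$. From there the proof is bookkeeping: a witness edge $uv\in E_e$ for $e\in\osc([S])\cap X^0$ is pushed by the gate map to a witness for $e\in\osc([\hS])$ in $[X]$ (its endpoints land in $[\hS]$ and in $[X]\setminus [\hS]$ respectively, the latter because $(X\circ v)_e=v_e=-S_e$), and conversely a witness edge in $[X]$ for $\osc([\hS])$ is already an edge in $G(\M)$ whose endpoints witness $e\in\osc([S])$. Non-crossing matches in both directions: $[\hS]\subseteq [S]$ (from $S\le \hS$ and Lemma~\ref{lem:convex-samples1}) yields $\cross([\hS])\subseteq \cross([S])$ directly, and the gate map shows $\cross([S])\cap X^0\subseteq \cross([\hS])$.

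The main obstacle I expect is keeping straight the two different ambient graphs when talking about osculation---whether $E_e$ osculates with $[\hS]$ inside $[X]$ versus with $[S]$ inside $G(\M)$. The gate map $T\mapsto X\circ T$ provides a clean dictionary between these two viewpoints on the coordinates $X^0$, and once it is in place the argument is essentially bookkeeping about which halfspace of $E_e$ each endpoint lies in.
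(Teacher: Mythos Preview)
Your proposal is correct and follows essentially the same strategy as the paper for part~(i): both arguments pivot on the gate map $T\mapsto X\circ T$ from $G(\M)$ into $[X]$, using that it preserves the $e$-th coordinate for $e\in X^0$ and sends $[S]$ into $[\hS]$. The paper phrases the non-crossing step via interval/convexity arguments on $[S]$ (deriving a contradiction from a hypothetical crossing edge), whereas you package it as the implication $\cross([S])\cap X^0\subseteq\cross([\hS])$ and take the contrapositive; these are two presentations of the same idea, yours being a bit more uniform since the gate map does all the work.

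The notable difference is part~(ii): you read $\hS^0=X^0\cap S^0$ straight off the definition of composition, which is the right thing to do. The paper instead argues via edges and gates, effectively showing that $E_e$ crosses $[\hS]$ whenever $e\in S^0\cap X^0$; this is unnecessarily elaborate (and its opening claim that $e\in S^0$ provides an edge of $E_e$ inside $[S]$ is not obviously justified), so your direct computation is both simpler and cleaner.
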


\begin{proof} To prove (i), first notice that since $[\hhS]$ and $[\hS]$ are $U$-isomorphic by Lemma \ref{lem:convex-samples2} and $[X]$ and $G(\M(X))$ are $U$-isomorphic by Lemma \ref{minor-pc-minor},
we obtain that $\osc([\hhS])=\osc([\hS])$.

Now we show that $\osc([\hS])\subseteq \osc([S])\cap X^0$. Pick any $e\in \osc([\hS])$.
Since $[\hS]\subseteq [X]$ and $[X]$ is isomorphic to $G(\M(X))$ by Lemma \ref{minor-pc-minor}, the $\Theta$-class $E_e$ necessarily crosses $[X]$, whence $e\in X^0$.
Since $e\in \osc([\hS])$, either $e\in \osc([S])$ and we are done, or $e\in \cross([S])$. Suppose by way of contradiction that
$e\in \cross([S])$. Then there exist two edges $T_1T_2$ and $T'_1T'_2$ of $E_e$ such that  $T_1\in [\hS]$ and $T_2\in [X]\setminus [\hS]$
and $T'_1,T'_2\in [S]$. But then  $T_2$ belongs to the interval either between $T_1$ and $T'_2$ or between $T_1$ and $T'_1$, contradicting the convexity of $[S]$.
This proves that $e\in \osc([S])$, establishing the inclusion
$\osc([\hS])\subseteq \osc([S])\cap X^0$.

Conversely, pick any $e\in \osc([S])\cap X^0$. Then there exist two edges $T_1T_2$ and $T'_1T'_2$ of $E_e$, such that  $T_1,T_2\in [X]$ and $T'_1\in [S]$, $T'_2\notin [S]$.
Since $X\in \covectors$, $[X]$ is gated. Denote by $T''_1$ and $T''_2$ the gates of respectively $T'_1$ and $T'_2$ in $[X]$: $T''_1=X\circ T'_1$ and $T''_2=X\circ T'_2$.
Since $T'_1$ and $T'_2$ are adjacent, the topes $T''_1$ and $T''_2$ are either adjacent or coincide. Furthermore, since $T''_1$ belongs to the interval $I(T'_1,T)$
between $T'_1$ and any $T\in [S]\cap [X]\ne \varnothing$, the convexity of $[\hS]=[S]\cap [X]$ implies that $T'_1\in [\hS]$. Now, if $T''_2=T''_1$, since $T'_1\in [S]$ and $T'_2\notin [S]$, the convexity of $[S]$
implies that $T'_1$ is in the interval $I(T'_2,T''_2)=I(T'_2,T''_1)$. Since $T''_2$ is in the intervals $I(T'_2,T_1)$ and $I(T'_2,T_2)$, we conclude that $T'_1$ also belongs to the intervals $I(T'_2,T_1)$ and $I(T'_2,T_2)$.
But this is impossible because the edges $T'_1T'_2$ and $T_1T_2$ belong to the same $\Theta$-class $E_e$. This proves that $T''_1$ and $T''_2$ are different and adjacent. Moreover, $T''_1\in I(T'_1,T''_2)$ and $T''_2\in I(T'_2,T''_1)$,
proving that the edge $T''_1T''_2$ also belongs to the $\Theta$-class $E_e$. Then we also have $T'_1\in I(T'_2,T''_1)$ and $T'_2\in I(T'_1,T''_2)$. Since $T'_1\in [S]$ and $T'_2\notin [S]$, the convexity of $[S]$ implies that
$T''_2\notin [S]$. Consequently, $T_1''\in [S]\cap [X]=[\hS]$ and $T''_2\in [X]\setminus [S]=[X]\setminus [\hS]$, establishing that $e\in \osc([\hS])$. This proves the inclusion $\osc([S])\cap X^0\subseteq \osc([\hS])$ and
concludes the proof of (i).

To prove (ii), first notice that $\hhS^0=\hS^0$ and that $\hS^0\subseteq S^0\cap X^0$. To prove the converse inclusion, pick any $e\in S^0\cap X^0$. Then there exist two edges $T_1T_2$ and $T'_1T'_2$ of $E_e$, such that  $T_1,T_2\in [X]$ and $T'_1,T'_2\in [S]$.
As in previous proof, let  $T''_1$ and $T''_2$ be the gates of  $T'_1$ and $T'_2$ in $[X]$. Then as above we deduce that $T''_1T''_2$ is an edge of $E_e$ belonging to $[X]$. If $T$ is a tope of $[\hS]=[S]\cap [X]$ (such a tope exists by Lemma~\ref{lem:convex-samples2}),
then $T''_1\in I(T'_1,T)$ and $T''_2\in I(T'_2,T)$. Since $[S]$ is convex and $T'_1,T'_2\in [S]$, we conclude that $T''_1,T''_2\in [S]$. Consequently, $T''_1T''_2$ is an edge of $[\hS]$, hence $e\in \hS^0$, establishing (ii).
\end{proof}

\subsection{Lower and upper covectors} Let $\M=(U,\covectors)$ be a COM.  We define lower and upper covectors for samples of $\M$. For a sample $S\in \Sam(\M)$ of $\M$ consider the
tope $T'=S\setminus S^0$ of $\M':=\M\setminus S^0$.  Any minimal non-zero covector
$X'$ of $\M'$ such that $T'\geq X'$ is called a \emph{lower covector} for $S$.
Since $\M'$ is a COM and $T'$ is a tope of $\M'$, lower covectors $X'$ for $S$ exist.
Any covector of $\M$ such that $X\setminus S^0=X'$
is called an \emph{upper covector} for $S$. Again, upper covectors for $S$ exist because
$X'$ is the restriction of some covector $X$ of $\M$. %Any upper covector with minimal support
%will be called an \emph{upper cocircuit}.
Note that
if $\M$ is an OM, then the lower and upper covectors are always cocircuits, which we will sometimes call \emph{lower and upper cocircuits} for $S$. For lower covectors this follows by minimality, but for upper covectors this follows from $S$ being full and is part of %the proof of
Lemma~\ref{cfocircuit-full-bis}.

Recall that we denote by $\M'(X')=\M'\setminus\underline{X'}$ the simple OM defined by the face
$\face(X')$ of $\M'$ and by $\M(X)=\M\setminus\uX$
the simple OM defined by the face $\face(X)$ of $\M$.

\begin{lemma} \label{upper0} If $S\in \Sam(\M)$, $X'$ is a lower covector for $S$, and $X\in \covectors$ is an upper covector for $S$ such that $X\setminus S^0=X'$, then
$\Sep(S,X)=\varnothing$ and $\vcdim(X)\ge \vcdim(X')$. Furthermore, if $\vcdim(X)=\vcdim(X')$, then $\hhS$ %=X\circ S$
is a full sample of $\M(X)$.
\end{lemma}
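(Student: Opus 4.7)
My plan is to handle each of the three conclusions in turn; all three follow from careful bookkeeping of deletions and zero sets, together with the key identity $\underline{X'} = \uX \setminus S^0$, which is immediate from the defining relation $X \setminus S^0 = X'$.

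For $\Sep(S,X) = \varnothing$, I would argue coordinatewise. On $e \in S^0$ we have $S_e = 0$, so $S_eX_e \neq -1$ trivially. On $e \in U \setminus S^0$ the hypothesis gives $X_e = X'_e$, and by definition $S_e = T'_e \neq 0$. The lower-covector condition $X' \leq T'$ then forces $X'_e \in \{0,T'_e\}$, so $S_eX_e \in \{0,1\}$. Hence no coordinate lies in $\Sep(S,X)$.

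For $\vcdim(X) \geq \vcdim(X')$, I would use the identity $\underline{X'} = \uX \setminus S^0$ to compute
\[
\M'(X') = \M' \setminus \underline{X'} = \M \setminus S^0 \setminus (\uX \setminus S^0) = \M \setminus (S^0 \cup \uX) = \M(X) \setminus (S^0 \setminus \uX).
\]
By Lemma~\ref{minor-pc-minor}, $G(\M'(X'))$ is obtained from $G(\M(X))$ by contracting the $\Theta$-classes indexed by $S^0 \setminus \uX$. Since pc-contraction does not increase VC-dimension, we get $\vcdim(X') = \vcdim(\M'(X')) \leq \vcdim(\M(X)) = \vcdim(X)$.

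For the last assertion, since $\Sep(S,X) = \varnothing$ by the first step, Lemma~\ref{lem:convex-samples2} ensures that $\hhS \in \Sam(\M(X))$. Its zero set is $\hhS^0 = S^0 \setminus \uX$ (computed in the universe $U \setminus \uX$), so combining with the identity above yields $\M(X) \setminus \hhS^0 = \M'(X')$. By the definition of fullness, $\hhS$ is a full sample of $\M(X)$ precisely when $\vcdim(\M(X) \setminus \hhS^0) = \vcdim(\M(X))$. Under the hypothesis $\vcdim(X) = \vcdim(X')$, the left-hand side equals $\vcdim(\M'(X')) = \vcdim(X') = \vcdim(X) = \vcdim(\M(X))$, completing the argument. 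I expect no serious obstacle beyond keeping track of which deletions act on which universe; the only subtlety is the identity $\M(X) \setminus (S^0 \setminus \uX) = \M'(X')$, which boils down to $\uX \cup S^0 = \uX \cup (S^0 \setminus \uX) = S^0 \cup (\uX \setminus S^0)$.
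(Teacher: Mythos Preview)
Your proof is correct but follows a genuinely different route from the paper's. The paper argues geometrically: for $\vcdim(X)\ge\vcdim(X')$ it invokes Lemma~\ref{lem:localshattering} to produce a specific set $D\subseteq\osc([T'])\cap X'^0$ of size $\vcdim(X')$ shattered by $\M'(X')$, observes $D$ is shattered by $\M$, and then transfers this to $\M(X)$ via gatedness (Lemma~\ref{lem:gatedshatter}); for fullness it shows this same $D$ lies in $\osc([\hhS])$ (via Lemma~\ref{lem:convex-samples3}) and appeals to Lemma~\ref{lem:full-sample-OM}. You instead establish the deletion identity $\M'(X')=\M(X)\setminus(S^0\setminus\uX)=\M(X)\setminus\hhS^0$ directly and read off both conclusions from the fact that pc-contraction cannot raise VC-dimension together with the raw definition of fullness. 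Your argument is more elementary---it bypasses Lemmas~\ref{lem:localshattering}, \ref{lem:gatedshatter}, \ref{lem:convex-samples3}, and \ref{lem:full-sample-OM}---and in fact anticipates the content of the paper's next lemma (Lemma~\ref{upper1}), which proves exactly this identity and re-derives $\vcdim(X)\ge\vcdim(X')$ from it. The paper's approach buys an explicit witness $D\subseteq\osc([\hhS])$ for the shattering, tying into the ``convex subgraph is full'' picture; yours buys brevity and avoids the OM-specific machinery.
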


\begin{proof} Let $X'$ be a lower covector for $S$.  Since $X'=X\setminus S^0$ and $X'\le T'=S\setminus S^0$, for any tope
$T$ of $\M$ such that $T'=T\setminus S^0$ (such tope $T$ exists since $\M$ is simple), we  have $S\leq T$ and $X\le T$,
yielding $T\in [S]\cap [X]$. Thus $\Sep(S,X)=\varnothing$. %The second assertion follows from Lemmas~\ref{lem:convex-samples2}
%and~\ref{lem:convex-samples3}.

Now we prove that $\vcdim(X)\ge \vcdim(X')$. Let $\vcdim(X')=d$. By Lemma
\ref{lem:localshattering}, there exists a set $D\subseteq \osc([T'])\cap X'^0$
of size $d$
shattered by $\M'(X')$. Since the tope graph of $\M'(X')$ is a pc-minor of $G(\M)$, $D$ is shattered by $\M$. Since $D\subseteq X'^0\subseteq X^0$ and $[X]$ is a gated subgraph of the tope graph
of $\M$, by Lemma~\ref{lem:gatedshatter}, $D$ is shattered by  $\M(X)$. This shows that $\vcdim(X)\ge d=\vcdim(X')$.

Now suppose that $\vcdim(X)=d$ and we assert that $\hhS$ is a full sample of $\M(X)$. By Lemma~\ref{lem:full-sample-OM} applied to OM $\M(X)$, the sample $\hhS$ is full if and only if the convex set $[\hhS]$ is full.
Since $\Sep(S,X)=\varnothing$, by Lemma~\ref{lem:convex-samples3}(i),
$\osc([\hhS])=\osc([S])\cap X^0$. Since  $D\subseteq \osc([T'])\cap X'^0$ and
$\osc([T'])=\osc([S])$ (by Lemma~\ref{lem:oscilate}), $X'^0\subseteq X^0$, we
deduce that
$D\subseteq \osc([\hhS])$.
Consequently, $\M(X)$ shatters a set $D\subseteq \osc([\hhS])$ of size $d$,
establishing that the convex set $[\hhS]$ is full in $\M(X)$.
\end{proof}

\begin{lemma} \label{upper1} Let $S\in \Sam(\M)$, $X'$ be a lower
covector for $S$, and $X\in \covectors$ be an upper covector for $S$
such that $X'=X\setminus S^0$. Then $\M'(X')=\M(X)\setminus\hhS^0=\M(X)\setminus\hS^0$. Consequently, %$\M'(X')$ is a minor of $\M(X)$ and
$\vcd(X)\ge \vcd(X')$.
\end{lemma}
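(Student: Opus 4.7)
The plan is to establish the equality $\M'(X')=\M(X)\setminus\hhS^0=\M(X)\setminus\hS^0$ at the level of ground sets and covector sets, and then to deduce $\vcd(X')\le\vcd(X)$ from the fact that $\M'(X')$ is a deletion of $\M(X)$. Throughout the argument I would use that $\Sep(S,X)=\varnothing$, which follows from Lemma~\ref{upper0}, in order to apply Lemma~\ref{lem:convex-samples3}, and I would use that $\covectors$ is closed under composition (established in~\cite{BaChKn} and implicitly invoked in the excerpt when defining $\M(X)$).

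For the ground sets, $X'=X\setminus S^0$ gives $\underline{X'}=\uX\setminus S^0$, so $\M'(X')=\M\setminus S^0\setminus\underline{X'}$ is supported on $U\setminus(\uX\cup S^0)$. By Lemma~\ref{lem:convex-samples3}(ii) one has $\hS^0=\hhS^0=S^0\cap X^0$ (which in particular yields the second equality in the statement), and hence $\M(X)\setminus\hhS^0=\M\setminus\uX\setminus(S^0\cap X^0)$ is also supported on $U\setminus(\uX\cup(S^0\cap X^0))=U\setminus(\uX\cup S^0)$. For the covector sets, unfolding the definitions shows that a covector of $\M(X)\setminus\hhS^0$ has the form $Y_{|U\setminus(\uX\cup S^0)}$ for some $Y\in\covectors$ with $X\le Y$, while a covector of $\M'(X')$ has the same shape but only under the weaker condition $X_e\le Y_e$ for all $e\in U\setminus S^0$ (i.e.\ $X'\le Y\setminus S^0$). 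One inclusion is immediate; for the reverse, given such a $Y$, I would set $Y^*:=X\circ Y$, which by composition closure lies in $\covectors$, satisfies $X\le Y^*$, and agrees with $Y$ on $X^0\supseteq U\setminus(\uX\cup S^0)$, so it witnesses the same restricted covector inside $\M(X)\setminus\hhS^0$.

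Once the equality $\M'(X')=\M(X)\setminus\hhS^0$ is established, Lemma~\ref{minor-pc-minor} shows that the tope graph of $\M'(X')$ is a pc-contraction of the tope graph of $\M(X)$, and since any subset of $\Theta$-classes shattered by a pc-contraction is already shattered by the original graph, VC-dimension cannot increase under pc-contractions; hence $\vcd(X')=\vcd(\M'(X'))\le\vcd(\M(X))=\vcd(X)$. The only nontrivial point is the reverse inclusion for the covector sets, which is exactly where closure of $\covectors$ under composition is needed; the rest amounts to bookkeeping of supports and zero sets.
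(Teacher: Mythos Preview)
Your proof is correct and reaches the same conclusion via a somewhat different route from the paper. The paper argues at the level of tope graphs: it first proves the set equality $\underline{X'}\cup S^0=\uX\cup\hS^0$ (equivalent to your ground-set computation, using $\underline{X'}=\uX\setminus S^0$ and $\hS^0=S^0\cap X^0$), and then invokes Lemma~\ref{minor-pc-minor} together with commutativity of pc-contractions to conclude that $G(\M'(X'))=\pi_{\underline{X'}\cup S^0}(G(\M))=\pi_{\uX\cup\hS^0}(G(\M))=G(\M(X)\setminus\hS^0)$; equality of the OMs then follows because an OM is determined by its tope graph. You instead work directly on the systems of sign vectors: after matching the ground sets you verify that the two covector sets literally coincide, the only nontrivial step being the reverse inclusion, which you handle with the substitution $Y\mapsto X\circ Y$ and closure of $\covectors$ under composition. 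Your approach is more elementary and self-contained for the main equality (it does not need the tope-graph machinery there), while the paper's approach stays within its running graph-theoretic framework. Both routes rest on the same core set identity and on $\Sep(X,S)=\varnothing$ from Lemma~\ref{upper0}, which is what allows you to invoke Lemma~\ref{lem:convex-samples3}(ii).
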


\begin{proof} First we prove the following claim:

\begin{claim} \label{equality} $\underline{X'}\cup S^0=\uX\cup \hS^0$.
\end{claim}

\begin{proof} To prove the inclusion $\underline{X'}\cup S^0\subseteq \uX\cup \hS^0$ notice that $\underline{X'}\subseteq \uX$ by the definition of $X$.
If $e\in S^0\setminus \uX$, then $e\in X^0$. By Lemma~\ref{lem:convex-samples3}(ii), $e\in S^0\cap X^0=\hS^0$, establishing that
$\underline{X'}\cup S^0\subseteq \uX\cup \hS^0$. To prove the converse inclusion $\uX\cup \hS^0\subseteq \underline{X'}\cup S^0$ note that $\hS^0\subseteq S^0$. If
$e\in \uX\setminus S^0$, then $e\in \underline{X'}$ because $X'=X\setminus S^0$, and we are done.
\end{proof}

Denote by $G(\M),G(\M'),$ and $G(\M(X))$ the tope graphs of $\M, \M'=\M\setminus S^0$, and $\M(X)$, respectively. Denote also by $G'$ the tope graph of $\M'(X')$ and by $G''$ the tope graph of $\M(X)\setminus\hS^0$. To prove that $\M'(X')=\M(X)\setminus\hS^0$ it suffices to establish that the tope graphs $G'$ and $G''$ coincide.
By Lemma~\ref{minor-pc-minor}, $[X]$ is isomorphic to $G(\M(X))=\pi_{\underline {X}}(G(\M))$. Furthermore, by the same lemma, $G''=G(\M(X)\setminus\hS^0)=\pi_{\hS^0}(G(\M(X))$.
Consequently, $G''=\pi_{\uX\cup \hS^0} G(\M)$. Analogously, by Lemma~\ref {minor-pc-minor}, $G'=G(\M'(X'))=\pi_{\underline{X'}}(G(\M'))$ and is isomorphic to $[X']$.
Since $G(\M')=\pi_{S^0}(G(\M))$, we conclude that $G'=\pi_{S^0\cup \underline{X'}}(G(\M))$. By Claim~\ref{equality}, $\underline{X'}\cup S^0=\uX\cup \hS^0$. Since the pc-contractions commute, we obtain that  $$G'=\pi_{S^0\cup \underline{X'}}(G(\M))=\pi_{\uX\cup \hS^0}(G(\M))=G'',$$ whence $\M'(X')=\M(X)\setminus\hS^0$. Since $\hhS^0=\hS^0$, we obtain the equality $\M'(X')=\M(X)\setminus\hS^0=\M(X)\setminus\hhS^0$. Since $G'=G''$ is a pc-minor of $G(\M(X))$, also  $\vcd(X)\ge \vcd(X')$ holds.
%
%
%Analogously, $G'$ is obtained from $G(\M')$ by a pc-restriction: $G'$ is the intersection of the halfspaces defined by the $\Theta$-classes $E_e$ with $e\in \underline{X'}$. Since $X'\in \covectors'$, $[X']$ is a gated subgraph of $G(\M')$. By the same argument
%as above we conclude that $G'=\pi_{\underline{X'}}(G(\M'))$. Since $G(\M')=\pi_{S^0}(G(\M))$, we conclude that $G'=\pi_{S^0\cup \underline{X'}}(G(\M))$. By Claim~\ref{equality}, $\underline{X'}\cup S^0=\uX\cup \hS^0$. Since the pc-contractions commute, we obtain that  $$G'=\pi_{S^0\cup \underline{X'}}(G(\M))=\pi_{\uX\cup \hS}(G(\M))=G''.$$ Since $G'=G''$ is a pc-minor of $G(\M(X))$, also  $\vcd(X)\ge \vcd(X')$ holds.
\end{proof}

In the following two results we suppose that $\M=(U,\covectors)$ is an OM of VC-dimension $d$.

	\begin{lemma} \label{cfocircuit-full}
		For any tope $T$ of $\M$ and $e\in \osc([T])$, there exists a
		cocircuit $X$ of $\M$ such that $e\in \uX,$ $X\leq T$, and
        $\M(X)$ has VC-dimension $d-1$. %and $T\setminus \uX$ is a tope of
		%$\M(X)$.
    \end{lemma}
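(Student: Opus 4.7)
The plan is to produce a cocircuit $X$ of $\M$ with $X\le T$ and $X_e = T_e$; once such an $X$ is found, the condition $\vcd(\M(X)) = d-1$ is automatic. Indeed, Lemma~\ref{lem:rankfunction} tells us that any cocircuit $X$ of a rank-$d$ OM $\M$ satisfies $\vcd(X)+1 = \vcd(\M) = d$, and since $\vcd(X)$ is by definition $\vcd(\M(X))$, we obtain $\vcd(\M(X)) = d-1$ for free. So the whole content of the lemma reduces to exhibiting a cocircuit $X\le T$ whose $e$-coordinate is non-zero.

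To construct $X$, I invoke the standard conformal-decomposition theorem of oriented matroid theory (see e.g.\ \cite{BjLVStWhZi}): every tope $T$ of an OM admits a decomposition
\[
T \;=\; X_1\circ X_2\circ \cdots \circ X_k
\]
as a composition of cocircuits $X_1,\ldots,X_k$, each of which satisfies $X_i\le T$. Since $T$ is a tope of a simple OM, $T_e\in\{-1,+1\}$, so the $e$-coordinate of the composition above is non-zero, forcing $(X_i)_e\ne 0$ for at least one index $i$. Setting $X:=X_i$ then yields a cocircuit with $X\le T$ and $X_e=T_e\ne 0$, so $e\in \uX$, exactly as required.

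The main obstacle is justifying the conformal version of the decomposition, since the preliminaries only record that $\covectors$ is the closure of $\cocircuits$ under (not necessarily conformal) composition. The conformal strengthening is nonetheless a classical consequence of the axioms (C), (SE), and (Sym), proved for instance by iteratively pruning non-conformal terms from an arbitrary decomposition using strong elimination applied to $T$, and is standard in the OM literature; I would simply cite it rather than reprove it here. I also note that the hypothesis $e\in \osc([T])$ is actually stronger than necessary for the argument above, since $T_e\ne 0$ holds for every $e\in U$; the stronger hypothesis in the statement seems to be included only because of how the lemma is invoked later.
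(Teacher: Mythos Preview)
Your argument is correct, and it takes a genuinely different route from the paper's. The paper uses the hypothesis $e\in\osc([T])$ to pick the neighbouring tope $T'$ with $\Sep(T,T')=\{e\}$, and then argues that some cocircuit $X$ must have $T\in\face(X)$ but $T'\notin\face(X)$ (otherwise simplicity of $\M$ would be violated at $e$); this forces $X\le T$ and $X_e\ne 0$. You instead invoke the conformal decomposition of a tope into cocircuits (Proposition~3.7.2 in \cite{BjLVStWhZi}), which immediately hands you a cocircuit $X\le T$ with $X_e\ne 0$ since $T_e\ne 0$. Both then finish identically via Lemma~\ref{lem:rankfunction}. Your route is shorter and, as you observed, actually proves a slightly stronger statement: the hypothesis $e\in\osc([T])$ is not needed, only $e\in U$. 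The trade-off is that you import a standard but non-trivial OM fact from outside the paper's preliminaries, whereas the paper's argument stays closer to the tope-graph language developed in Section~\ref{sec:prel-results}.
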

	
	\begin{proof}
		Since $T$ is a tope and $e\in \osc([T])$,  $T$ is
		incident to an edge	of $E_{e}$, i.e., there is a tope $T'$ of $\M$
		such that $\Sep(T,T')=\{e\}$. Let $X$ be a cocircuit of $\M$ such
		that its face $\face(X)$ contains $T$ but not $T'$. This cocircuit
		$X$ exists, otherwise all cocircuits $Y$ of $\M$ would have
		$Y_{e}=0$,	contradicting the assumption that $\M$ is simple.
        Now, since $\M$ has VC-dimension $d$, $\M(X)$ has VC-dim $d-1$ by
		Lemma~\ref{lem:rankfunction}. %Thus,	there exists a covector $X$ of $\M$
%		such that $X'=X\setminus S^0$ and $\vcd(X)=d-1$. If $X$ is not a
%		cocircuit, then $\uparr X$ is a proper face of $\uparr Y$ for a
%		cocircuit $Y$ of $\M$. Since the VC-dimension of any proper face is
%		strictly smaller than the VC-dimension of the face itself and since
%		$\M$ has rank $d$, we obtain a contradiction. Thus $X$ is a cocircuit
%		of $\M$. In particular, $e\in \uX$ and $X\leq S$.
		%
        Furthermore, as $T\in [X]$ and $T'\notin [X]$, we immediately
        get that $X\leq T$ and $e\in \uX$.  %and
        %that $T\setminus \uX$ is a tope of $\M(X)$.
   \end{proof}
%
%        It remains to show that $T\setminus \uX$ is a full sample of
%		$\M(X)$, i.e., that the VC-dimension of $(\uparr X\setminus
%		\uX)\setminus S^0$ is $d-1$. To see this note that by
%		Lemma~\ref{lem:deletecocircuits}, $X\setminus
%		S^0\in\cocircuits(\M\setminus S^0)$ and since $S$ is full the
%		VC-dimension of $\M\setminus S^0$ is $d$. Hence, the VC-dimension of
%		$\M(X\setminus S^0)$ is $d-1$ by Lemma~\ref{lem:rankfunction}. But
%		since all sign-vectors in $\uparr X$ agree on $\uX$, we have
%		$\M(X\setminus S^0)\cong (\uparr X\setminus \uX)\setminus
%		S^0$. Hence, $\vcd((\uparr X\setminus \uX)\setminus
%		S^0)=\vcd(\M(X\setminus S^0))=\vcd(\M)-1=d-1$.
%	\end{proof}

% \begin{lemma} \label{cfocircuit-full-bis}  For any full sample $S$ of $\M$, $e\in \osc([S])$, and a cocircuit $X'$ of $\M'$%\M\setminus S^0$
% such that
%         $e\in \underline{X'}$ and $X'\leq S\setminus S^0$, there exists an upper covector $X$ of $\M$ such that $X\setminus S^0=X'$ and $\vcd(X)=d-1$. Then $X'$ is a lower cocircuit
%         $S$ and $X$ is an upper cocircuit for $S$. For any such cocircuit
%         $X$, we have $e\in \uX$, $\Sep(S,X)=\varnothing$, and $\hS$
%         %$\setminus \uX$ ($\hS$????)
%         is a full sample of $\M(X)$.
% \end{lemma}
\begin{lemma} \label{cfocircuit-full-bis}  For any full sample $S$ of $\M$ and $e\in \osc([S])$, there exists a lower cocircuit $X'$ for $S$ %\M\setminus S^0$
such that $e\in \underline{X'}$. For any such $X'$, there exists an upper cocircuit $X$ for $S$. Any such cocircuit $X$ satisfies that $\vcd(X)=d-1$, $e\in \uX$, $\Sep(S,X)=\varnothing$, and $\hhS$
        %$\setminus \uX$ ($\hS$????)
        is a full sample of $\M(X)$.
\end{lemma}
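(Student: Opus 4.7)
The plan is to reduce to Lemma~\ref{cfocircuit-full} applied to the deletion $\M' := \M \setminus S^0$, and then pull the resulting cocircuit of $\M'$ back to a cocircuit of $\M$ using Lemma~\ref{lem:deletecocircuits}. First I would establish the compatibility between $\osc([S])$ (in $\M$) and the oscillating $\Theta$-classes at $T' := S \setminus S^0$ (in $\M'$). From $S_\bot \leq S$ we get $S^0 \subseteq (S_\bot)^0 = U \setminus \osc([S])$, so any $e \in \osc([S])$ satisfies $e \notin S^0$ and in particular $e$ survives as an element of $\M'$. Moreover $\cross([S]) \subseteq (S^\top)^0 \subseteq S^0$, so contracting $S^0$ collapses $[S]$ to the single vertex $T'$, which is a tope of $\M'$. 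Iterating Lemma~\ref{lem:oscilate} over the classes in $S^0$ (all of which lie outside $\osc([S])$) then gives $\osc(T') = \osc([S])$ in $\M'$, so in particular $e \in \osc(T')$.

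Since $S$ is full, $\vcd(\M') = d$, so I can apply Lemma~\ref{cfocircuit-full} to $\M'$, the tope $T'$, and $e \in \osc(T')$ to obtain a cocircuit $X'$ of $\M'$ with $e \in \underline{X'}$, $X' \leq T'$, and $\vcd(\M'(X')) = d-1$. Cocircuits are atoms of the big face lattice $\mathcal{F}_{\mathrm{big}}(\M')$, so $X'$ is minimal among non-zero covectors of $\M'$ lying below $T'$, i.e.\ a lower cocircuit for $S$ with $e \in \underline{X'}$. To produce a corresponding upper cocircuit, I invoke Lemma~\ref{lem:deletecocircuits}: the cocircuits of $\M' = \M \setminus S^0$ are the minimal elements of $\{ Y \setminus S^0 : Y \in \circuits^*(\M)\}$, so some cocircuit $X$ of $\M$ satisfies $X \setminus S^0 = X'$, and this $X$ is an upper cocircuit for $S$.

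It remains to verify the four properties for any upper cocircuit $X$ of $S$ obtained this way. $\vcd(X) = d-1$ is immediate from Lemma~\ref{lem:rankfunction} since $X$ is a cocircuit of the OM $\M$. For $e \in \uX$, since $e \notin S^0$ we have $X_e = (X \setminus S^0)_e = X'_e \neq 0$. The condition $\Sep(S, X) = \varnothing$ is the first conclusion of Lemma~\ref{upper0}, and the equality $\vcd(X) = d-1 = \vcd(X')$ triggers the moreover clause of the same lemma, giving that $\hhS$ is a full sample of $\M(X)$.

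There is no substantial obstacle: the argument is essentially an assembly of the lemmas developed earlier in the section. The only step requiring genuine care is the opening compatibility check, namely relating $\osc([S])$ in $\M$ to $\osc(T')$ in $\M'$ and verifying $e \notin S^0$; without this, Lemma~\ref{cfocircuit-full} would not be applicable to the element $e$ in $\M'$, and the cocircuit pulled back via Lemma~\ref{lem:deletecocircuits} would not be guaranteed to contain $e$ in its support.
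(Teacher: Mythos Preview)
Your proof is correct and follows the same overall strategy as the paper: pass to $\M'=\M\setminus S^0$, apply Lemma~\ref{cfocircuit-full} to the tope $T'=S\setminus S^0$ and the element $e\in\osc([T'])=\osc([S])$, then lift back to $\M$ and invoke Lemma~\ref{upper0}. The one substantive difference is how you obtain the upper cocircuit: you invoke Lemma~\ref{lem:deletecocircuits} to see directly that any cocircuit $X'$ of $\M'$ is the restriction of some cocircuit $X$ of $\M$, whereas the paper starts with an arbitrary upper \emph{covector} $X$ and argues, via $\vcd(X)\ge\vcd(X')=d-1$ from Lemma~\ref{upper1} together with the fact that proper faces in an OM have strictly smaller VC-dimension, that $X$ must already be a cocircuit. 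Your route is a bit shorter; the paper's route has the advantage of establishing the slightly stronger fact that \emph{every} upper covector for a full sample $S$ in an OM is automatically a cocircuit---something the text explicitly flags in the paragraph preceding the lemma as being ``part of'' Lemma~\ref{cfocircuit-full-bis}, even though it is not written into the formal statement. If you want to match that, you could add one sentence noting that any upper covector $X$ satisfies $\vcd(X)\ge d-1$ by Lemma~\ref{upper1}, forcing it to be a cocircuit by Lemma~\ref{lem:rankfunction}.
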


\begin{proof}
	Since $S$ is a full sample, $\M'=\M\setminus S^0$ has rank $d$.
		%and thus rank $d$.
        Moreover, $S\setminus S^0$ is a tope $T'$ of $\M'$.
		By Lemma~\ref{lem:oscilate}, $e\in \osc([S])=\osc([T'])$ and by Lemma~\ref{cfocircuit-full}
        there exists a cocircuit $X'$ of $\M'$ such that $e\in \underline{X'}$, $X'\leq T'$, and
        $\M(X')$ has VC-dim $d-1$.  Thus $X'$ is a lower cocircuit for $S$ and hence there exists an upper
        covector  $X$ of $\M$ such that $X'=X\setminus S^0$. By Lemma~\ref{upper1}, $\vcd(X)\ge \vcd(X')=d-1$. If $X$ was not a
		cocircuit, then $\face(X)$ is a proper face of $\face(Y)$ for some cocircuit $Y$ of $\M$. Since in an OM the VC-dimension of any proper face is
		strictly smaller than the VC-dimension of the face itself  and since
		$\M$ has VC-dimension $d$, we obtain a contradiction. Thus $X$ is a cocircuit
		of $\M$ (and an upper cocircuit for $S$) and $\vcd(X)=\vcd(X')=d-1$. In particular, $e\in \uX$.
        By %the second assertion of
        Lemma~\ref{upper0}, $\Sep(S,X)=\varnothing$ and $\hhS$ is a full sample
        of $\M(X)$.
        %It remains to show that $\hS$ is a full sample of $\M(X)$. %Since by Lemma~\ref{upper1}, $\M'(X')$ is a minor of $M(X)\setminus \hS$, $D$ is shattered by  $M(X)\setminus \hS$, thus $\hS$ is full in $\M(X)$.
 %
  %
%, i.e., that the VC-dimension of $\face(X\setminus
%		\uX)\setminus S^0$ is $d-1$. To see this note that by
%		Lemma~\ref{lem:deletecocircuits}, $X\setminus
%		S^0\in\cocircuits(\M\setminus S^0)$ and since $S$ is full the
%		VC-dimension of $\M\setminus S^0$ is $d$. Hence, the VC-dimension of
%		$\M(X\setminus S^0)$ is $d-1$ by Lemma~\ref{lem:rankfunction}. But
%		since all sign-vectors in $\face(X)$ agree on $\uX$, we have
%		$\M(X\setminus S^0)\cong F(X\setminus \uX)\setminus
%		S^0$. Hence, $\vcd(\face(X\setminus \uX)\setminus
%		S^0)=\vcd(\M(X\setminus S^0))=\vcd(\M)-1=d-1$.
\end{proof}

\section{The main result}

The goal of this section is to prove the following theorem:

%\medskip
\begin{theorem} \label{main} The set $\topes$ of topes of a complex of oriented
matroids $\M=(U,\covectors)$ of VC-dimension $d$ admits a
proper labeled sample compression scheme of size $d$.
\end{theorem}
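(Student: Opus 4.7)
The plan is to proceed by induction on $d = \vcd(\M)$, using the compactness lemma of Ben-David and Litman to reduce to finite COMs. The base case $d = 0$ is trivial: $\covectors$ has a unique tope $T$, and we set $\alpha(S) = \mathbf{0}$, $\beta(\mathbf{0}) = T$. For the inductive step we fix a linear order $<$ on $U$, which the decoder will use for canonical choices.

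For a sample $S \in \Sam(\M)$ we invoke the lower/upper covector apparatus of Section~\ref{sec:prel-results}. Set $\M' = \M \setminus S^0$ and $T' = S \setminus S^0 \in \topes(\M')$, and let $e$ be the $<$-smallest element of $\underline{S} \cap \osc([T'])$. Applying Lemma~\ref{cfocircuit-full-bis} inside the OM face of $\M'$ containing $T'$ yields a lower cocircuit $X'$ with $e \in \underline{X'}$ and $\vcd(X') = d-1$; the canonical corresponding upper covector $X \in \covectors$ (for instance, the $<$-minimal covector with $X \setminus S^0 = X'$ and $\vcd(X) = \vcd(X')$, whose existence follows from Lemmas~\ref{upper0} and~\ref{upper1}) satisfies $\Sep(S,X) = \varnothing$, $e \in \uX$, $\vcd(\M(X)) = d-1$, and $\hhS := S \setminus \uX$ is a full sample of the simple OM $\M(X)$. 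Since $\M(X)$ is a COM of VC-dimension $d-1$, the inductive hypothesis supplies a proper labeled compression scheme $(\alpha_X, \beta_X)$ of size $d-1$. Define
\[
\alpha(S)_f =
\begin{cases}
S_f & \text{if } f = e,\\
\alpha_X(\hhS)_f & \text{if } f \in U \setminus \uX,\\
0 & \text{otherwise}.
\end{cases}
\]
The support has size at most $d$, and $\alpha(S) \le S$ holds because $S_e \ne 0$ (as $e \in \underline{S}$) and $\alpha_X(\hhS) \le \hhS = S_{|U \setminus \uX}$. The decoder $\beta$, given $\xi$, extracts the distinguished coordinate $e$ canonically from $\xi$, reconstructs the cocircuit $X$ uniquely from $(e, \xi_e)$ using the linear order together with Lemma~\ref{lem:localshattering}, feeds $\xi_{|U \setminus \uX}$ to $\beta_X$ to recover a tope $\hat T$ of $\M(X)$, and lifts $\hat T$ to a tope of $\M$ inside $[X]$ via the $U$-isomorphism of Lemma~\ref{minor-pc-minor}. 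Properness is immediate since $[X] \subseteq \topes$ by Lemma~\ref{lem:face-gated}, and $S \le \beta(\alpha(S))$ follows from $\Sep(S,X) = \varnothing$ together with the inductive invariant $\hhS \le \beta_X(\alpha_X(\hhS))$.

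The principal obstacle is ensuring that the decoder can canonically recover the upper covector $X$ from $\alpha(S)$ alone, without access to $S$. The natural rule is to run the encoder in reverse: take $e$ to be the $<$-smallest coordinate in $\xi$'s support passing the structural compatibility test dictated by Lemmas~\ref{lem:localshattering} and~\ref{cfocircuit-full-bis}, and then select $X$ as the unique cocircuit of $\M$ with $e \in \uX$, $X_e = \xi_e$, minimal in the $<$-order subject to the consistency conditions. Showing that this rule really is canonical, i.e., that any two samples with the same compression trigger the same $e$ and the same $X$, is where the parallel-face structure from Lemmas~\ref{prop:projection} and~\ref{lem:galleries} does the heavy lifting. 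Secondary subtleties are handling the degenerate case $\underline{S} \cap \osc([T']) = \varnothing$ (which forces $[T']$ to span $G(\M')$, reducing to a strictly smaller instance) and the case where $S$ is not full in $\M$ (in which $\vcd(\M') < d$ and the inductive hypothesis applies directly to $\M'$, then lifts back). The structural foundation of the whole scheme is the fact that every face $\face(X)$ of a COM is an OM (Section~\ref{subsec:del}), so the recursive step $\M \leadsto \M(X)$ always lands within the COM universe, keeping the induction well-founded.
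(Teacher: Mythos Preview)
Your proposal has the right skeleton and you have correctly located the crux, but the resolution you sketch for that crux does not go through, and two further issues compound it.

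\textbf{The decoder cannot recover the encoder's $X$.} You assert that the decoder ``reconstructs the cocircuit $X$ uniquely from $(e,\xi_e)$'' by taking the $<$-minimal cocircuit satisfying certain consistency conditions. But the encoder's $X$ was chosen via $X'$, a cocircuit of $\M'=\M\setminus S^0$, and the defining condition $X\setminus S^0=X'$ involves $S^0$, which the decoder does not know. In a COM there are in general many covectors $X$ with $e\in\uX$, $X_e=\xi_e$ and $\vcd(\M(X))=d-1$, and nothing forces the $<$-minimal one to satisfy $\Sep(S,X)=\varnothing$. The paper does \emph{not} attempt to recover a unique $X$. Instead it proves the localization lemma (Lemma~\ref{lem:HS=HD}): the set $\mathcal{H}_{S,X'}$ of upper covectors for $S$ of the correct VC-dimension coincides with the set $\mathcal{H}_D$ of covectors of $\M$ that \emph{maximally shatter} $D=\underline{\alpha(S)}$. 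Since $\mathcal{H}_D$ depends only on $D$, the decoder may pick \emph{any} $X\in\mathcal{H}_D$ and is then guaranteed $\Sep(S,X)=\varnothing$. Your pointer to Lemmas~\ref{prop:projection} and~\ref{lem:galleries} is in the right direction, but the actual statement and its proof are absent from your argument.

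\textbf{Encoder and decoder recurse into different schemes.} Even if the decoder lands on some $Y\in\mathcal{H}_D$, your compressor has already applied the recursive scheme $(\alpha_X,\beta_X)$ for the encoder's $X$, while the decoder would call $\beta_Y$ for its own $Y$. Nothing in your outline guarantees $\beta_Y(\xi_{|U\setminus\uY})\ge S\setminus\uY$ when $Y\ne X$. The paper sidesteps this by making the distinguishing function $f$ independent of the upper covector: it is computed entirely in $\M'(X')$, and the identity $\M'(X')=\M(X)\setminus\hhS^0$ (Lemma~\ref{upper1}) shows that $f_{\M(X)}(\hhS)=f_{\M'(X')}(T')$ for \emph{every} $X\in\mathcal{H}_D$. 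The decoder then outputs a tope of the realizer $\mathcal{R}(C,D)=\bigcap_{Q\in\Omega(C,D)}[Q]$ inside $\M(X)$, and Lemma~\ref{lem:distinguish}(iii) is what certifies this intersection is nonempty and contained in $[\hhS]$.

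\textbf{The non-full case is broken.} When $S$ is not full you propose to apply the inductive hypothesis ``directly to $\M'$'' and lift back. But $\M'=\M\setminus S^0$ is not available to the decoder, so $\beta$ cannot invoke the scheme for $\M'$. The paper treats full and non-full samples uniformly: the compressor descends to the OM $\M'(X')$ (of VC-dimension possibly $<d$) and returns a set $D$ of size $\vcd(\M'(X'))$; the decoder, given only $D$, works entirely inside $\M$ via $\mathcal{H}_D$, never needing to know $S^0$.
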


%The proof of the theorem
%is based on the \emph{distinguishing} and the \emph{localization lemma},
%provided in the next subsections. Compressor and reconstructor are given in the
%last subsection and are illustrated by Example~\ref{ex:compression}.

\subsection{The main idea}
Our labelled sample compression scheme takes any realizable sample $S$ of a COM $\M$ and
removes the zero set of $S$. Consequently, $S$ becomes the tope $S\setminus S^0$ of
the COM $\M\setminus S^0=:\M'$. Then we consider a face $\face(X')$ of $\M'$ defined
by a minimal covector  $X'$ of $\M'$ such that $S\setminus S^0\geq X'$ (i.e., by a lower covector for $S$).
This face defines the simple OM $\M'(X')=\M'\setminus\underline{X'}$. The compressor  $\alpha(S)$ is then
defined by applying to $\M'(X')$ and its tope $S\setminus (S^0\cup \underline{X'})$
the \emph{distinguishing lemma}, which
allows to distinguish the  full samples of an OM $\M$ of rank $d$ by considering
their restriction to subsets of size $d$. It constructs a function $f_\M$
that assigns such a subset to each full sample and is used by both compressor
and reconstructor. The \emph{localization lemma} is used by the reconstructor and
designates the set of all potential covectors whose faces
may contain topes $T$ compatible with the initial sample $S$. These two lemmas
are proved in next two subsections.  Compressor and
reconstructor are given in the last subsection and are illustrated by
Example~\ref{ex:compression}. The compressor generalizes the compressor for ample
classes of Moran and Warmuth~\cite{MoWa}. However, the reconstructor is more involved
than that for ample classes.

\subsection{The distinguishing lemma} In this subsection, $\M=(U,\covectors)$ is an OM of VC-dimension/rank $d$. %The \emph{distinguishing lemma} allows to distinguish
We continue with the definition of the function $f_\M$ defined on the set $\Sam_f(\M)$ of
full samples of $\M$. Fix a linear order on the ground set $U=\{1, \ldots, m\}$ of $\M$.
For any subset $U'=U\setminus A$ of $U$ we will consider the restriction of this linear order to $U'$. Suppose recursively
that we have already defined the functions $f_{\M'}$ on the set $\Sam_f(\M')$ of full samples
of all proper (i.e., $A\neq\varnothing$) deletions $\M'=(U\setminus A,\covectors\setminus A)$ of $\M$.
Let $S\in \Sam_f(\M)$ be a full sample of $\M$. If $S$ is not a tope of $\M$, then we set
$f_{\M}(S)=f_{\M\setminus S^0}(S\setminus S^0)$. Otherwise, if $S$ is a tope of $\M$, then  %if
%$S$ is not a tope of $\M$ consider the minor $\M'=\M\setminus S^0$ and its tope $S\setminus S^0$. We
we set $f_{\M}(S)=\{e_S, f_{\M(X')}(S\setminus e_S)\},$
where:
\begin{itemize}
\item  $e_S$ is the smallest element of $\osc([S])$;
\item $X'$ is the lexicographically minimal lower cocircuit for $S$ in $\M$ such that $e_S\in \underline{X'}$ and $X'\leq S$.
\end{itemize}
Equivalently, $f_\M(S)$ can be defined recursively by setting $f_{\M}(S)=\{e_S, f_{\M'(X')}(S\setminus(S^0\cup \underline{X'}))\},$
where $\M'=\M\setminus S^0$ and:
\begin{itemize}
\item  $e_S$ is the smallest element of $\osc([S\setminus S^0])=\osc([S])$;
\item $X'$ is the lexicographically minimal lower cocircuit for $S$ in $\M'$ such that $e_S\in \underline{X'}$ and $X'\leq S\setminus S^0$.
\end{itemize}

\begin{remark}
Here we order sign vectors lexicographically by setting $0<+<-$. This choice is needed in order to avoid freedom in the definition,
but is arbitrary. Indeed, we will prove that taking any lower cocircuit $X'$ for $S$ in $\M'$ such that $e_S\in \underline{X'}$ and $X'\leq S$ will work.
\end{remark}

The equality $\osc([S])=\osc([S\setminus S^0])$ holds by Lemma~\ref{lem:oscilate}. The cocircuit $X'$ exists by Lemma~\ref{cfocircuit-full}.
Since $S\setminus (S^0\cup \underline{X'})$ is a tope (and thus a full sample) of $\M'(X')$ and since $\M'(X')$ has VC-dimension $d-1$ by Lemma~\ref{cfocircuit-full},
by induction hypothesis $f_{\M'(X')}(S\setminus (S^0\cup \underline{X'}))$ is well-defined. Furthermore, $f_\M(S)$ has size $d$, thus $f_\M$ is a map from
$\Sam_f(\M)$ to ${U \choose d}$.

Now, we define an equivalence relation $\sim$ on the set  $\Sam_f(\M)$ of all full samples of $\M$:

\begin{definition} [Equivalence classes of full samples] Two full samples $S,S'\in \Sam_f(\M)$ are \emph{equivalent}
(notation $S\sim S'$) if $f_\M(S)=f_\M(S')$ and $S_{|f_{\M}(S)}=S'_{|f_{\M}(S')}$ hold. Clearly, $\sim$ is an equivalence relation on $\Sam_f(\M)$.
Denote by $\Omega_1,\ldots,\Omega_k$ the equivalence classes of $\Sam_f(\M)$.
\end{definition}

The partition of $\Sam_f(\M)$ into equivalence classes can be also
viewed in the following way. For any set $D\subseteq U$
of size $d$ and any $C\in \{\pm 1,0\}^U$ with $ \underline{C}=D$,  we denote by
$\Omega(C,D)$ the set of all $S\in \Sam_f(\M)$ such that
$f_\M(S)=D$ and $S_{|f_{\M}(S)}=C$. Then $\Omega(C,D)$ is either empty or is an equivalence class of $(\Sam_f(\M),\sim)$.

We continue with the distinguishing lemma, which shows that $f_{\M}$  distinguishes samples from different equivalence classes of $\sim$ and defines for all samples from the same equivalence class
a nonempty convex set, which later in Definition~\ref{def:real} will be called the realizer and will be used by the reconstructor.
%By
%	Claim~\ref{cfocircuit-full}, $X$ exists and by Lemma
%	\ref{lem:convex-samples1} $[S]\subseteq [X]$ holds. Now we prove that
%	$f_\M$ satisfies the conditions (i)-(iv). By Claim~\ref{cfocircuit-full},
%	$S\setminus \uX$ is a full sample of $\M(X)$ to which we can
%	apply the induction hypothesis.

\begin{lemma}\label{lem:distinguish}
 Let $\M=(U,\covectors)$ be an $\OM$ of VC-dimension  $d$. The function
 $f_{\M}:\Sam_f(\M) \to {U \choose d}$ has the following properties for
 all $S\in \Sam_f(\M)$:
\begin{itemize}
\item[(i)] $f_{\M}(S)\subseteq \osc([S])$,
\item[(ii)] $f_{\M}(S)$ is shattered by $\M$,
%\item[(iii)] if $e\notin\osc([S])$, then $f_{\M}(S)=f_{\M\setminus
%	  e}(S\setminus e)$,
\item[(iii)] for any equivalence class $\Omega_i, i=1,\ldots, k$ of $(\Sam_f(\M),\sim)$, $\bigcap_{S\in \Omega_i} [S]\ne \varnothing$. %if $S_{|f_{\M}(S)}=S'_{|f_{\M}(S')}$, then $[S]=[S']$.
\end{itemize}
\end{lemma}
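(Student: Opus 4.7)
I would prove all three properties jointly by induction on $|U|$, with the base case $d=0$ being vacuous. The recursive definition of $f_\M$ descends either to $\M\setminus S^0$ (when $S$ is not a tope) or to the strictly smaller OM $\M(X')$ of rank $d-1$ (when $S$ is a tope), so the induction is well-founded.

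For (i), when $S$ is not a tope I would use $\osc_\M([S])=\osc_{\M\setminus S^0}([S\setminus S^0])$ from Lemma~\ref{lem:oscilate} and apply induction. When $S$ is a tope, $e_S\in\osc([S])$ by construction; by induction and Lemma~\ref{lem:convex-samples3}(i), the remaining $f_{\M(X')}(\hhS)$ lies in $\osc_{\M(X')}([\hhS])=\osc_\M([S])\cap X'^0\subseteq\osc([S])$. For (ii), the same reduction gives (by induction) a set $D'$ of size $d-1$ shattered by $\M(X')$, and hence by $[X']$ in $G(\M)$ via Lemma~\ref{minor-pc-minor}. By axiom (Sym), $-X'$ is also a cocircuit and $[-X']$ shatters $D'$ as well. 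Since $e_S\in\underline{X'}$, topes of $[X']$ and of $[-X']$ carry opposite signs at $e_S$, so combining the two faces gives shattering of $D=D'\cup\{e_S\}$ by $\M$.

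For (iii), the setup is as follows. By (i)--(ii), all $S\in\Omega$ share the same $D=f_\M(S)$ and restriction $C=S|_D$; moreover $e^*:=\min D$ equals $e_S$ for each $S\in\Omega$ (since $e_S=\min\osc([S])\le\min D$ and $e_S\in D$). Any tope $T\in[S]$ must satisfy $T|_D=C$: indeed $D\subseteq\osc([S])$ forces $T|_D$ to be constant on $[S]$, and $T\ge S$ makes this constant equal to $S|_D=C$. Hence $\bigcap_{S\in\Omega}[S]\subseteq H:=\{T\in\topes:T|_D=C\}$. A further useful observation is that the zero-padded sample $\tilde S$ with $\tilde S|_D=C$ and $\tilde S|_{U\setminus D}=0$ belongs to $\Omega$ (it is realizable because $D$ is shattered, full because $\M\setminus(U\setminus D)\cong Q_D$ retains VC-dimension $d$, and one computes $f_\M(\tilde S)=D$ by direct recursion in $Q_D$). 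Thus $\tilde S$ is the unique $\le$-minimum of $\Omega$ and $[\tilde S]=H$.

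To produce a common tope $T^*\in\bigcap_{S\in\Omega}[S]$, I would fix $S_0\in\Omega$, take its lex-minimal lower cocircuit $X'=X'_{S_0}$, and by Lemma~\ref{upper0} obtain that $\hhS_0$ is a full sample of $\M(X')$ lying in some equivalence class $\Omega'$ of $\Sam_f(\M(X'))$. By induction there is a common tope $T'\in\bigcap_{S'\in\Omega'}[S']$ in $G(\M(X'))$; lifting via the $U$-isomorphism of Lemma~\ref{minor-pc-minor} and the identity $[\hS_0]=[S_0]\cap[X']$ of Lemma~\ref{lem:convex-samples2} gives a tope $T^*\in[S_0]$. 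The remaining step --- and the main obstacle --- is to prove $T^*\in[S]$ for \emph{every} $S\in\Omega$. The plan is to show that for each such $S$, the corresponding reduced sample $\hhS$ (with respect to $X'$) is full in $\M(X')$ and belongs to the same class $\Omega'$, so that $T'\in[\hhS]$ lifts back to $T^*\in[\hS]\subseteq[S]$. This alignment hinges on the invariants $X'_{e^*}=C_{e^*}$ across $\Omega$, the identity $\hhS^0=S^0\cap X^0$ of Lemma~\ref{lem:convex-samples3}(ii), and the lexicographic tie-breaking in the definition of $X'_S$, which together canonicalize the reductions so that they all feed the same inductive sub-problem.
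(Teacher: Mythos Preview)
Your arguments for (i) and (ii) are fine; in fact your symmetry argument for (ii) is more elementary than the paper's (which appeals to circuit/cocircuit orthogonality via Theorem~\ref{lem:orthogonality} and Lemma~\ref{lem:shatterandcircuits}).

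The gap is in (iii). You fix a single $S_0\in\Omega$, take its lex-minimal lower cocircuit $X'=X'_{S_0}$, and then need that \emph{every} $S\in\Omega$ reduces, via this same $X'$, to a full sample of the same OM landing in the same sub-class $\Omega'$. But $X'_{S_0}$ is a cocircuit of $\M\setminus S_0^0$, whereas other samples $S$ live in different deletions $\M\setminus S^0$; there is no a priori reason their lex-minimal lower cocircuits should be compatible, and the lex rule cannot help here since it is applied inside different ambient OMs. (Indeed the paper remarks that the lex tie-breaking is irrelevant: any lower cocircuit through $e_S$ would do.) You also slip between $\M(X')$ and $\M(X)$: Lemma~\ref{upper0} yields fullness of $\hhS$ only in $\M(X)$ for an \emph{upper} cocircuit $X$ of $\M$, not in $\M(X')$.

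What makes the paper's proof of (iii) go through is precisely the passage to upper cocircuits together with Lemma~\ref{lem:galleries}. For each $Q\in\Omega$ one lifts $X'_Q$ to an upper cocircuit $X_Q\in\covectors$ via Lemma~\ref{cfocircuit-full-bis}; all the $[X_Q]$ then \emph{maximally} shatter the common set $D'=D\setminus\{e\}$, and since cocircuits give facets, Lemma~\ref{lem:galleries}(ii)--(iii) force $X_Q=\pm X_R$ for any two $Q,R\in\Omega$. The common sign $C_e$ at $e\in\underline{X_Q}\cap\underline{X_R}$ then excludes $X_Q=-X_R$, so in fact \emph{all} upper cocircuits coincide in a single $X\in\covectors$. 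Only then can one uniformly pass to $\M(X)$ and invoke the inductive hypothesis. Your sketch does not contain this uniqueness step (nor any substitute for it), and the concluding sentence about ``canonicalizing the reductions'' does not supply one.
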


\begin{proof}
Let $G:=G(\M)$ be the tope graph of $\M$. We proceed by induction on $d$.
	If $d=1$, then $U=\{ e\}$ and $G$ is an edge between the topes $T_1=(-1)$
	and $T_2=(+1)$, which are the only full samples of $\M$. %(the
	%unique other sample $(0)$ is not full).
    Then $f_\M(T_1)=f_\M(T_2)=\{
	e\}$ and  we obtain a function satisfying the conditions (i)-(iii). Thus,
    let $d\geq 2$.

	\medskip \noindent
	\textbf{Condition (i):} By definition of $f_\M(S)$, the element $e_S$ is chosen from $\osc([S\setminus S^0])=\osc([S])$. Let $T'=S\setminus S^0$. By induction hypothesis, the remaining elements
    of $f_\M(S)$ will be chosen from  $\osc([T'\setminus \underline{X'}])$. Note that $T''=T'\setminus \underline{X'}=S\setminus (S^0\cup \underline{X'})$ is a tope of $\M'(X')$ and $\osc([T''])$ is defined by the edges of the
    tope graph of $\M'(X')$  incident to $T''$. Since this is a subset of edges incident to  $T'$ in the tope graph of  $\M'$, we conclude that $\osc([T''])\subseteq \osc([T'])=\osc([S])$. This proves that $f_\M(S)\subseteq \osc([S])$.

	\medskip\noindent
	\textbf{Condition (ii):} Suppose that $f_{\M}(S)$ is not shattered by $\M$.
	Define $D'=f_{\M'(X')}(T'\setminus\underline{X'})$, where $\M'=\M\setminus
	S^0$, $T'=S\setminus S^0$, and
    $X'$ is any cocircuit of $\M'$ such that $e_S\in \underline{X'}$ and $X'\leq T'$, which exists by Lemma
   ~\ref{cfocircuit-full-bis}.
    By the induction hypothesis, $D'$ is shattered by  $\M'(X')$. By Lemma
   ~\ref{cfocircuit-full-bis}, there exists a cocircuit $X$ of $\M$ such that
    $X\setminus S^0=X'$ and $e_S\in \underline{X'}$.
    Since $D'$ is shattered by $\M'(X')$, we get $D'\subseteq X'^0\subseteq X^0$. %and $X\setminus S^0=X'$, $D'$ is also shattered by $\M(X)$, whence $D'\subseteq X^0$.
	Since $f_\M(S)=D'\cup \{ e_S\}$ is not shattered by $\M$,  by Lemma~\ref{lem:shatterandcircuits} there is a circuit $Y$ of $\M$
	such that $\uY\subseteq \{e_S\}\cup D'$ and $e_S\in
	\uY$. On the other hand,  $D'\subseteq X^0$ and
	$e_S\in \uX$, thus $|\uY\cap \uX|=1$. Since $X$ is a cocircuit and $Y$ is a circuit, this contradicts
	orthogonality of circuits and cocircuits in	$\OM$s, see
	Theorem~\ref{lem:orthogonality}.

	\medskip\noindent
	\textbf{Condition (iii):} The case $d=1$ was considered above, so let  $d\ge 2$. % and we proceed by induction on $d$.
    Suppose that $\Omega_i=\Omega(C,D)$ for some $C\in \{\pm 1,0\}^U$ and $D=\underline{C}$. Let $Q,R$ be any two full samples of $\Omega(C,D)$ and denote $\M'=\M\setminus Q^0$ and $\M''=\M\setminus R^0$.
    Thus $f_\M(Q)=f_\M(R)=D$ and $Q_{|f_\M(Q)}=R_{|f_\M(R)}=C$. By definition, $f_\M(Q)=\{e_Q, f_{\M'(X'_Q)}(Q\setminus (Q^0\cup \underline{X'_Q}))\}$, where $e_Q$ is the smallest element
    of $\osc([Q\setminus Q^0])=\osc([Q])$ and $X'_Q$ is a lower cocircuit for $Q$ such that $e_Q\in \underline{X'_Q}$ and $X'_Q\leq Q\setminus Q^0$.
    Analogously, $f_\M(R)=\{e_R, f_{\M''(X'_R)}(R\setminus (R^0\cup \underline{X'_R}))\}$, where $e_R$ is the smallest element
    of $\osc([R\setminus R^0])=\osc([R])$ and $X'_R$ is a lower cocircuit for $R$ such that $e_R\in \underline{X'_R}$ and $X'_R\leq R\setminus R^0$.
    Since $f_\M(Q)=f_\M(R)$,  by the minimality in the choice of the elements
	$e_Q$ and $e_{R}$,  both are the smallest elements of the respective sets
	$f_\M(Q)$ and $f_\M(R)$. Consequently,  $e_Q=e_{R}=:e$ and $D=\{ e\}\cup
	D'$, where $f_{\M'(X'_Q)}(Q\setminus (Q^0\cup
	\underline{X'_Q}))=f_{\M''(X'_R)}(R\setminus (R^0\cup
	\underline{X'_R}))=:D'$.

    By Lemma~\ref{cfocircuit-full-bis}, there exists an upper cocircuit $X_Q$
    of $\M$ such that $X_Q\setminus Q^0=X'_Q$,  $e\in \underline{X_Q}$, and
    $\vcd(X_Q)=d-1$.
    Analogously, there exists an upper cocircuit $X_R$ of $\M$ such that
    $X_R\setminus R^0=X'_R$,  $e\in \underline{X_R}$, and $\vcd(X_R)=d-1$.
    Furthermore, by the same Lemma~\ref{cfocircuit-full-bis} and by
    Lemma~\ref{lem:convex-samples2}, we have
    $[Q]\cap [X_Q]\ne \varnothing$ and $[R]\cap [X_R]\ne \varnothing$. Since both faces $\face(
	X_Q)\cong \M(X_Q)$ and $\face(X_R)\cong\M(X_R)$ of $\M$ shatter the same set $D'\subseteq
	U$,	Lemma~\ref{lem:galleries} implies that $X_Q=X_R$ or $X_Q=-X_R$.
	Indeed, let $X_Q\ne \pm X_R$. Since $X_Q,X_R$ maximally shatter $D'$, by Lemma
	~\ref{lem:galleries}(ii) $X_Q=X_Q\circ X_R$ and $X_R=X_R\circ X_Q$. By Lemma
	~\ref{lem:galleries}(iii) there exists  a geodesic gallery between $\face(
	X_Q)$ and $\face(X_R)$. Since $X_Q$ and $X_R$ are cocircuits of $\M$, $\face(X_Q)$
	and $\face(X_R)$ are facets of $\M$. Therefore  $\face(X_Q)$ and $\face(X_R)$
	must be consecutive in the gallery and the face containing them as facets
	must coincide with $\M$. Thus, $X_Q=\pm X_R$.
	
	But if $X_Q=-X_R$ holds,  since
	$e\in\underline{X_Q}\cap\underline{X_R}$, we have $e\in\Sep(X_Q,X_R)$. Since
	$[Q]\cap [X_Q]\ne \varnothing$ and $[R]\cap [X_R]\ne \varnothing$, for any two topes
    $T'\in [Q]\cap [X_Q]$ and $T''\in [R]\cap [X_R]$ we will have $T'_e=-T''_e$.
    Since $e\in \osc([Q])\cap \osc([R])$, we get $Q_e=-R_e$,  which contradicts the
	assumption $Q_{|f_\M(Q)}=R_{|f_\M(R)}$. Hence, $X_Q=X_R$. Since the equality $X_Q=X_R$ holds for any $Q,R\in \Omega(C,D)$, there exists a cocircuit $X$ of $\M$ such that
    for any $S\in \Omega(C,D)$, we have $X\setminus S^0=X'_S$,  $e\in \underline{X'_S}$, $\vcd(X)=d-1$, and $[S]\cap [X]\ne \varnothing$.

    By the induction hypothesis, the function $f_{\M(X)}$ defined on the set $\Sam_f(\M(X))$
    of full samples of $\M(X)$ satisfies the properties (i)-(iii) of the lemma. Let $C'$ denote the restriction of $C$ to $D'$.
    Denote by $\Omega'(C',D')$ the set of all $Q'\in \Sam_f(\M(X))$ such that $f_{\M(X)}(Q')=D'$ and $Q'_{|D'}=C'$.
    For any $Q\in \Omega(C,D)$, we have $[Q]\cap [X]\ne\varnothing$, thus $\Sep(X,Q)=\varnothing$. By Lemma~\ref{lem:convex-samples2}, $[\hQ]=[Q]\cap [X]\ne \varnothing$. By the same lemma, $\hhQ\in \Sam(\M(X))$ and $[\hhQ]$ is  $U$-isomorphic to $[\hQ]$.
    By Lemma~\ref{cfocircuit-full-bis}, $\hhQ$ is a full sample of $\M(X)$,
    i.e.,  $\hhQ\in \Sam_f(\M(X))$. We assert that $\hhQ\in \Omega'(C',D')$.
    Recall that $X$ is an upper cocircuit for $Q$ and $X'_Q$ is a lower
    cocircuit for $Q$ such that $X\setminus Q^0=X'_Q$. By Lemma~\ref{upper1},
    $\M'(X_Q')=\M(X)\setminus\hQ^0=\M(X)\setminus \hhQ^0$. This implies that
    $f_{\M(X)}(\hhQ)=f_{\M'(X'_Q)}(Q\setminus (Q^0\cup \underline{X'_Q}))=D'$
    and consequently that $\hhQ_{|{D'}}=Q_{|D'}=C'$.
    This establishes the inclusion $\{ \hhQ: Q\in \Omega(C,D)\}\subseteq
    \Omega'(C',D')$. Since $\Omega(C,D)=\Omega_i\ne \varnothing$,  the set
    $\Omega'(C',D')$ is nonempty and thus is an equivalence class of  $(\Sam_f(\M(X)),\sim)$.
    By the induction hypothesis, in $G(\M(X))$ we have $\bigcap_{Q'\in \Omega'(C',D')} [Q'] \ne \varnothing$.
    Denote this intersection by ${\mathcal R}'(C',R')$.

    Let ${\mathcal R}(C,R)$ denotes the (nonempty) set of topes $T$ of $G(\M)$ of the form $T=T'\times X_{|\uX}$ for some tope $T'$ of $\M(X)$ belonging to the set ${\mathcal R}'(C',R')$.
    By the  one-to-one correspondence $\varphi_X$ between the topes of $[X]$ and the topes of $G(\M(X))$ we conclude that ${\mathcal R}(C,R)\subseteq [X]$. Pick any sample $Q\in \Omega(C,D)$.
    Since $\hhQ\in \Omega'(C',D')$, we get $T'\in {\mathcal R}'(C',R')\subseteq [\hhQ]$ (recall that $[\hhQ]$ is considered in $G(\M(X))$).
    By the $U$-isomorphism between the convex subgraphs $[\hQ]$ and $[\hhQ]$ (Lemma \ref{lem:convex-samples2}), we deduce that the tope $T=T'\times X_{|\uX}$  belongs  in $G(\M)$ to $[\hQ]$.
    Consequently, the inclusion ${\mathcal R}(C,R)\subseteq [\hQ]\cap [X]$ holds for any $Q\in \Omega(C,D)$.  Since for any $Q\in \Omega(C,D)$,  $[\hQ]=[Q]\cap [X]$ by Lemma~\ref{lem:convex-samples2},
    we conclude that $(\bigcap_{Q\in \Omega(C,D)} [Q])\cap [X]=\bigcap_{Q\in \Omega(C,D)} ([Q]\cap [X])\supseteq {\mathcal R}(C,R)\ne \varnothing$. Consequently, $\bigcap_{Q\in \Omega(C,D)} [Q]\ne \varnothing$. % because
    % $$\varnothing\ne  \bigcap_{Q\in \Omega_i} [\hQ]=\bigcap_{Q\in \Omega_i} ([Q]\cap [X])=(\bigcap_{Q\in \Omega_i} [Q])\cap [X]\subseteq \bigcap_{Q\in \Omega_i} [Q].$$
     This concludes the proof of property (iii) and of the lemma.
   \end{proof}

 \begin{definition} [Realizers] \label{def:real}  For an equivalence class $\Omega_i=\Omega(C,D)$ of $(\Sam_f(\covectors),\sim)$, we call the nonempty intersection ${\mathcal R} (C,D)=\bigcap_{S\in \Omega_i} [S]$ the \emph{realizer}
 of $\Omega(C,D)$.
 \end{definition}

\subsection{The localization lemma}
The \emph{localization lemma} designates for any realizable sample $S$
of a COM $\M$ the set of all potential covectors %$X$
whose faces %$\uparr X$
may contain topes %$T$
of $\M$ which can be used by the reconstructor.

Let $\M=(U,\covectors)$ be a COM of VC-dimension $d$ and let $S\in
\Sam(\M)$ be a realizable sample. Consider the tope $T'=S\setminus
S^0$ of the COM $\M':=\M\setminus S^0$ and let $X'$ be a minimal covector of
$\M'$ such that $T'\geq X'$. %Note that if $\M'$ is an OM, then $X'=\mathbf{0}$
%and $\uparr X'=\M'$.
By Lemma~\ref{lem:rankfunction}, the $\OM$ $\M'(X')=
\M'\setminus \underline{X'}$ has VC-dimension $\le d$. Let \[{\mathcal
H}_{S,X'}:=\{X\in\covectors: X\setminus S^0=X' \text{ and $\vcdim(\M(X))=\vcdim(\M'(X'))$}\}.\]
For a set $D\subseteq U$, let \[{\mathcal H}_D:=\{X\in\covectors: \M(X) \text{
maximally shatters } D\}.\]

\begin{lemma}\label{lem:HS=HD}
	Let $S\in\Sam(\M)$, $X'$ be a minimal covector of
	$\M'=\M\setminus S^0$ such that $S\setminus S^0=T'\geq X'$, and let
	$D$ be a subset of $\underline{S}=U\setminus S^0$ such that $|D|=\vcdim(\M'(X'))$ and $D$ is shattered by $\M'(X')$. Then $\varnothing\ne
	{\mathcal H}_{S,X'}={\mathcal H}_D$.
\end{lemma}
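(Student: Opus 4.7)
The plan is to establish both inclusions ${\mathcal H}_{S,X'} \subseteq {\mathcal H}_D$ and ${\mathcal H}_D \subseteq {\mathcal H}_{S,X'}$ and deduce nonemptiness as a byproduct. The forward inclusion uses Lemma~\ref{upper1}: for $X \in {\mathcal H}_{S,X'}$, that lemma yields $\M'(X') = \M(X)\setminus\hhS^0$, so $\M'(X')$ is obtained from $\M(X)$ by pc-contracting $\hhS^0 \subseteq S^0$. As $D \subseteq U \setminus S^0$ is disjoint from $\hhS^0$, the shattering of $D$ passes in both directions between $\M(X)$ and $\M'(X')$ (since pc-contractions preserve shattering of sets disjoint from the contracted classes, and shattering in a pc-minor lifts back). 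Combined with $\vcdim(\M(X)) = \vcdim(\M'(X')) = |D|$ from the defining condition of ${\mathcal H}_{S,X'}$, this forces $\M(X)$ to maximally shatter $D$, so $X \in {\mathcal H}_D$.

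For nonemptiness of ${\mathcal H}_D$, since $\M'(X')$ is a pc-minor of $\M$ and shatters $D$, so does $\M$. Applying Lemma~\ref{lem:galleries}(iii) to the zero covector (or any covector whose face shatters $D$), there is some $X_0 \in \covectors$ whose face $[X_0]$ maximally shatters $D$, giving ${\mathcal H}_D \ne \varnothing$; once the equality with ${\mathcal H}_{S,X'}$ is established, both sets are nonempty.

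For the reverse inclusion take $X \in {\mathcal H}_D$; the VC-dim condition is automatic as both sides equal $|D|$. Let $\tilde X' := X \setminus S^0$. A computation mirroring Lemma~\ref{upper1} gives $\M'(\tilde X') = \M \setminus (S^0 \cup \underline X) = \M(X) \setminus \hhS^0$, and the same pc-minor lifting shows $\M'(\tilde X')$ also maximally shatters $D$ with $\vcdim(\M'(\tilde X')) = |D| = \vcdim(\M'(X'))$. The strategy is then: (a) show $\Sep(X,S) = \varnothing$, equivalently $[X] \cap [S] \ne \varnothing$, which ensures $\tilde X' \le T'$ so that $\tilde X'$ is a non-zero covector of $\M'$ below $T'$; (b) use minimality of $X'$ together with the rank/VC-dimension equality to conclude $\tilde X'$ is itself minimal below $T'$; (c) invoke Lemma~\ref{lem:galleries}(iii) and the parallel-face characterization of Lemma~\ref{prop:projection}(ii) to get $\underline{\tilde X'} = \underline{X'}$, and conclude $\tilde X' = X'$ since both must then agree with $T'$ on their common support.

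The main obstacle is step (a): a priori an $X \in {\mathcal H}_D$ need only maximally shatter $D$ and could in principle be incompatible with the signs $S_{|D}$. The plan is to exploit that $D \subseteq \underline{S}$ and that $[X]$ realizes every pattern on $D$ by maximal shattering, in particular the pattern $S_{|D}$; then combined with $S \in \Sam(\M)$ (so some tope $T$ satisfies $S \le T$) and the gated-projection machinery of Lemmas~\ref{lem:face-gated} and~\ref{prop:projection}, one finds a common tope in $[X] \cap [S]$, giving $\Sep(X,S) = \varnothing$. This interaction between the maximality of shattering on $D \subseteq \underline{S}$ and the realizability of $S$ is the heart of the argument, and is where the COM structure of $\M$ is used most delicately.
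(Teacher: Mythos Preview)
Your forward inclusion ${\mathcal H}_{S,X'}\subseteq{\mathcal H}_D$ is fine and matches the paper. The reverse inclusion, however, has a genuine gap at your step~(a): the sketched argument for $\Sep(X,S)=\varnothing$ does not go through. Knowing that $[X]$ realizes the pattern $S_{|D}$ only produces a tope $T_0\in[X]$ agreeing with $S$ on $D$; it says nothing about the remaining coordinates in $\underline{S}\setminus D$. Likewise, the gate in $[X]$ of any tope $T\ge S$ is $X\circ T$, and for $e\in\underline{X}\cap\underline{S}$ one has $(X\circ T)_e=X_e$, which is $-S_e$ precisely when $e\in\Sep(X,S)$. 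So neither the shattering nor the gated-projection machinery yields a tope in $[X]\cap[S]$ without already knowing $\Sep(X,S)=\varnothing$. Your step~(b) is also off: equality of VC-dimensions does not force $\tilde X'$ to be a minimal covector of $\M'$, since different facets of a COM may have different VC-dimensions; and in any case minimality of $\tilde X'$ is not what is needed.

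The paper avoids step~(a) entirely. Setting $Y':=X\setminus S^0$ (your $\tilde X'$), it observes that $\M'(Y')=\M(X)\setminus(S^0\setminus\underline{X})$ still maximally shatters $D$ because $D\subseteq\underline{S}$ is untouched by this contraction. Now both $[X']$ and $[Y']$ maximally shatter $D$ in $\M'$, and since $D\subseteq(X'\circ Y')^0$, Lemma~\ref{lem:gatedshatter} together with Lemma~\ref{lem:rankfunction} force $X'\circ Y'=X'$ and, symmetrically, $Y'\circ X'=Y'$. If $\Sep(X',Y')=\varnothing$ this yields $Y'\le X'$, and minimality of $X'$ gives $X'=Y'$. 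If $\Sep(X',Y')\ne\varnothing$, then $\face(X')$ and $\face(Y')$ are parallel and by Lemma~\ref{prop:projection}(ii) joined by a geodesic gallery of positive length, so $\face(X')$ lies in a strictly larger face of $\M'$, contradicting the minimality of $X'$. The crucial point is that the \emph{minimality of $X'$} (i.e., maximality of $\face(X')$ in $\M'$) drives the argument via compositions and galleries, with no need to establish compatibility of $X$ with $S$ beforehand; $\Sep(X,S)=\varnothing$ then follows a posteriori from $Y'=X'\le T'$.
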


\begin{proof} First, we prove that ${\mathcal H}_{S,X'}\subseteq
	{\mathcal H}_D$. Pick any $X\in {\mathcal H}_{S,X'}$. Since
$\M'(X')$ shatters $D$ and  $G(\M'(X'))$ is a pc-minor of
$G(\M(X))$ because $X\setminus S^0=X'$, $\M(X)$ also shatters $D$.
Since $\vcdim(\M(X))=\vcdim(\M'(X'))$, $\M(X)$ maximally shatters $D$, yielding
$X\in {\mathcal H}_D$.

Now we prove that the set ${\mathcal H}_{S,X'}$ is nonempty. By Lemma~\ref{lem:deletecocircuits} there
exists at least one covector $X\in \covectors$ such that $X\setminus S^0=X'$. For the same reason as above,
$\M(X)$ shatters $D$. Suppose that $\M(X)$ shatters
a superset of $D$. By Lemma~\ref{lem:galleries}(iii), there exists a covector $Y>X$ of $\M$ such that $\M(Y)$ maximally shatters $D$.
Hence, $Y\setminus S^0\geq X\setminus S^0=X'$, but $\M'(Y\setminus S^0)$ and $\M'(X')$ have the same VC-dimension since they both maximally shatter the set $D$.
By Lemma~\ref{lem:rankfunction}, $Y\setminus S^0=X'$ and hence $Y\in {\mathcal H}_{S,X'}$. This proves that ${\mathcal H}_{S,X'}\ne\varnothing$.
%
%In particular, we have shown that any
%	element of ${\mathcal H}_{S,X'}$ shatters $D$ and $|D|$ is its VC-dimension
%	so it maximally shatters $D$. Hence, ${\mathcal H}_{S,X'}\subseteq
%	{\mathcal H}_D$.
	
	It remains to prove that ${\mathcal H}_D\subseteq {\mathcal H}_{S,X'}$. Assume by way of contradiction
    that there exists  $Y\in {\mathcal H}_D\setminus {\mathcal H}_{S,X'}$ and set $Y'=Y\setminus
	S^0$. Since $Y\notin {\mathcal H}_{S,X'}$ and $\M(Y)$ maximally shatters $D$, we have $X'\neq Y'$.
    Since $\M(Y)$ maximally shatters $D$ and $D\subseteq \underline{S}$, also $\M'(Y')$ maximally shatters $D$. In particular,
    $D\subseteq X'^0\cap Y'^0=(X'\circ Y')^0$. By
	Lemma~\ref{prop:projection} the gates of $[Y']$ in $[X']$ are the topes of
	$\face(X'\circ Y')\subseteq \face(X')$. Thus, $[X'\circ Y']$ is a gated
	subgraph of $[X']$, and $[X'\circ Y']$ is crossed by $D$ (since $D\subseteq (X'\circ Y')^0=\cross([X'\circ Y'])$), and $D$ is
	shattered by $[X']$. By Lemma~\ref{lem:gatedshatter}, the VC-dimension of
	$\M'(X'\circ Y')$ is at least $|D|$, which is the VC-dimension of
	$\M'(X')$. Then Lemmas~\ref{lem:galleries}(i) and~\ref{lem:rankfunction} yield $X'\circ Y'=X'$. If $\Sep(X',Y')=\varnothing$, then $\face(X')=\face(X'\circ Y')\subseteq
	\face(Y')$.  Since $\face(X')$ is a maximal face of $\M'$, we get $X'=Y'$.
	Otherwise, if $\Sep(X',Y')\neq \varnothing$, then by Lemmas~\ref{lem:galleries}(iii)  and~\ref{prop:projection}   there
	exists a geodesic gallery $(\face(X')=\face(X_0), \face(X_1), \ldots,
	\face(X_k)=\face(Y'))$  with $k>0$  from $\face(X')$ to $\face(Y')$ in $\M'$. By the
	definition of a gallery, the union of $\face(X')$ and $\face(X_1)$ is included in a
	face $\face(Z)\supsetneq \face(X')$ of $\M'$. Thus, $\face(X')$ is not a
	maximal face of $\M'$, contradicting the assumption that $X'$ is a
	minimal covector of $\M'$.
\end{proof}

\subsection{The labeled compression scheme}
%In this subsection,
Now, we describe the compression and the reconstruction and prove their
correctness.  The compression map %$\alpha$ is  a generalization of
generalizes the compression map for ample classes of~\cite{MoWa}. However, the
reconstruction map %$\beta$
is much more involved than the reconstruction map for ample classes, since it uses
both the distinguishing and the localization lemma.

\medskip\noindent
\textbf{Compression.}
Let $\M=(U,\covectors)$ be a COM of VC-dimension $d$. For a sample $S\in
\Sam(\M)$ of $\M$, consider the tope $T'=S\setminus S^0$ of
$\M':=\M\setminus S^0$ and let $X'$ be the lexicographically minimal lower circuit for $S$, i.e.,
the lexicographically minimal support-minimal covector of $\M'$ such that $T'\geq X'$.
Denote by $\M'(X')=\M'\setminus\underline{X'}$ the simple OM defined by the face
$\face(X')$ of $\M'$. Define $\alpha(S)_e=S_e$ if  $e\in f_{\M'(X')}(T')$ and  $\alpha(S)_e=0$ otherwise.
The map $\alpha$ is well-defined since $T'$ is a tope of $\M'(X')$ and hence
the sample $T'$ is full in $\M'$. Moreover, by definition we have
$\alpha(S)\leq S$, whence $\alpha(S)\in\Sam(\M)$.
Finally, by Lemma~\ref{lem:rankfunction} the $\OM$ $\M'(X')$ has VC-dimension
at most $d$ and thus, by Lemma~\ref{lem:distinguish} $\alpha(S)$ has support of
size $\le d$.

\medskip\noindent
\textbf{Reconstruction.}
To define $\beta$, pick any $C\in\{\pm 1,0\}^U$ in the image $\Ima(\alpha)$
of $\alpha$ and let $D:=\underline{C}$. Let $X$ be any covector from ${\mathcal
H}_D$, i.e., $X$ is a covector of $\covectors$ that maximally shatters $D$.
By Lemma~\ref{lem:HS=HD}, $X$ exists. Let $\Omega(C,D)$ be the set of all full samples  $Q\in \Sam_f(\M(X))$ of the OM $\M(X)$ such that
$f_{\M(X)}(Q)=D$ and $Q_{|f_{\M(X)}(Q)}=C$. Lemma~\ref{beta-claim1} below shows  that $\Omega(C,D)$ is nonempty.  Thus
$\Omega(C,D)$ is an equivalence class of $(\Sam_f(\M(X)),\sim)$. By Lemma~\ref{lem:distinguish}(iii), the realizer ${\mathcal R} (C,D)=\bigcap_{Q\in \Omega(C,D)} [Q]$ of $\Omega(C,D)$
is a nonempty convex subgraph of $G(\M(X))$.  Then, let $\beta(C)$ be any tope $\ttT$ of $\M$ of the form $\ttT=\ttT_0\times X_{|\uX}$, where $\ttT_0$ is any tope from ${\mathcal R} (C,D)$.

\medskip\noindent
\textbf{Correctness.}
We prove that $(\alpha,\beta)$ defines a proper labeled sample compression
scheme, namely, we show that for all samples  $S\in\Sam(\M)$, we have (1) $\alpha(S)\leq S$ and $\alpha(S)$ has support of
size $\le d$ and (2) $\beta(\alpha(S))$ is well-defined and $\beta(\alpha(S))\geq S$. The assertion (1) has been already established.
Let $C=\alpha(S)$ and $D=\underline{C}$. To prove that $\beta$ is well-defined, we have to show  that $\Omega(C,D)$ is nonempty.
%Since $C\in \Ima(\alpha)$, there exists $S\in \Sam(\covectors)$, such that $C=\alpha(S)$.
This follows from the following result:

\begin{lemma} \label{beta-claim1} $\hhS=\hS\setminus \uX=(X\circ S)\setminus \uX$ belongs to $\Omega(C,D)$.
%	$\hS$ satisfies the conditions (1)-(4) of the definition of
%	$\beta$. Moreover, $[\hS]=[X]\cap [S]$ and $\hT\geq S$ for any tope $\hT\in
%	[\hS]$.
\end{lemma}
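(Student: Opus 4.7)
The plan is to apply the already-established machinery of upper/lower covectors (Lemmas \ref{upper0}, \ref{upper1}) together with the localization lemma (Lemma \ref{lem:HS=HD}), so that the recursive definition of $f_{\M(X)}(\hhS)$ reduces to $f_{\M'(X')}(T'\setminus\underline{X'}) = D$, while the required agreement on $D$ follows by tracing definitions. Throughout, $X'$ denotes the lex-min lower covector chosen by the compressor, $T' = S\setminus S^0$, and $\M' = \M\setminus S^0$.

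First I would check that the covector $X\in\mathcal{H}_D$ chosen by the reconstructor is in fact an upper covector for $S$ whose VC-dimension matches that of $X'$. By Lemma \ref{lem:distinguish}(ii) the set $D = f_{\M'(X')}(T'\setminus\underline{X'})$ is shattered by $\M'(X')$ and has cardinality $\vcd(\M'(X'))$. Lemma \ref{lem:HS=HD} (applied to $S$ and the lex-min lower covector $X'$) then yields $\mathcal{H}_D = \mathcal{H}_{S,X'}$, so $X\setminus S^0 = X'$ and $\vcd(\M(X)) = \vcd(\M'(X'))$. Applying Lemma \ref{upper0} we conclude $\Sep(S,X)=\varnothing$ and $\hhS\in\Sam_f(\M(X))$, which is the first requirement for membership in $\Omega(C,D)$.

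Next I would identify $f_{\M(X)}(\hhS)$ with $D$. Lemma \ref{upper1} supplies the crucial identity $\M(X)\setminus\hhS^0 = \M'(X')$. Since $\hhS_e = S_e$ for every $e\notin\uX$, we have $\hhS^0 = S^0\setminus\uX$, and combined with Claim \ref{equality} this gives $\uX\cup\hhS^0 = \underline{X'}\cup S^0$; hence both $\hhS\setminus\hhS^0$ and $T'\setminus\underline{X'}$ are defined on the common ground set $U\setminus(\uX\cup S^0)$ and each coincides with the restriction of $S$ to this set, so they are equal. The recursive definition of $f$ then gives
$$f_{\M(X)}(\hhS) \;=\; f_{\M(X)\setminus\hhS^0}(\hhS\setminus\hhS^0) \;=\; f_{\M'(X')}(T'\setminus\underline{X'}) \;=\; D,$$
with the degenerate case $\hhS^0=\varnothing$ (when $\hhS$ is already a tope of $\M(X)$) holding trivially since then $\M(X) = \M'(X')$ and $\hhS = T'\setminus\underline{X'}$. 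By Lemma \ref{lem:distinguish}(i) applied to $\M'(X')$, $D\subseteq\osc([T'\setminus\underline{X'}])$ is contained in the ground set $U\setminus(S^0\cup\underline{X'}) = U\setminus(\uX\cup S^0)$, so $D$ is disjoint from both $\uX$ and $S^0$; therefore for every $e\in D$ we have $\hhS_e = S_e = C_e$, so $\hhS_{|D} = C$, establishing $\hhS\in\Omega(C,D)$.

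The main obstacle I anticipate is the bookkeeping of ground sets: the OM $\M(X)$ lives on $U\setminus\uX$ while $\M'(X')$ lives on $U\setminus(S^0\cup\underline{X'})$, and one has to recognise that after the further contraction of $\hhS^0$ in $\M(X)$ the two OMs become literally the same, so that the recursive definitions of $f_{\M(X)}(\hhS)$ and $f_{\M'(X')}(T'\setminus\underline{X'})$ can be matched up. This identification is precisely Lemma \ref{upper1}; once it is in hand, the remainder of the argument is a careful but routine unwinding of the definitions of $\alpha$, $\hhS$, and $\Omega(C,D)$.
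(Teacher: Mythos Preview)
Your proposal is correct and follows essentially the same approach as the paper's proof: you use Lemma~\ref{lem:HS=HD} to identify $X$ as an upper covector for $S$ with the right VC-dimension, invoke Lemma~\ref{upper0} to get fullness of $\hhS$, apply Lemma~\ref{upper1} to identify $\M(X)\setminus\hhS^0$ with $\M'(X')$, and then unwind the recursive definition of $f$ to conclude $f_{\M(X)}(\hhS)=D$ and $\hhS_{|D}=C$. Your version is slightly more explicit than the paper's in verifying that $\hhS\setminus\hhS^0 = T'\setminus\underline{X'}$ and in handling the degenerate case $\hhS^0=\varnothing$; one minor quibble is that Claim~\ref{equality} is internal to the proof of Lemma~\ref{upper1}, so it would be cleaner to cite Lemma~\ref{lem:convex-samples3}(ii) directly for $\hhS^0=S^0\cap X^0$.
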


\begin{proof} By Lemma~\ref{lem:convex-samples2},  $\hhS\in \Sam(\M(X))$. Since $X\in {\mathcal H}_D$,  by Lemma~\ref{lem:HS=HD}, $X$
satisfies $X\setminus S^0=X'$,  where $X'$ is the minimal covector of
$\M'=\M\setminus S^0$ chosen in the definition of $\alpha(S)$.  Since $X\setminus S^0=X'\leq T'=S\setminus S^0$,
we have $\Sep(X,S)=\varnothing$. By Lemma~\ref{lem:convex-samples2} $[\hS]=[X]\cap [S]$ is a nonempty convex subgraph
of $[X]$ and $[\hhS]$ is $U$-isomorphic to $[\hS]$. Since	$X\setminus \hS^0=X'$ and  both $\M(X),\M'(X')$ have the
same VC-dimension $|D|$, $\hhS$ is a full sample of $\M(X)$ by the last assertion of Lemma~\ref{upper0}.

By Lemma~\ref{upper1}, $\M'(X')=\M(X)\setminus \hS^0=\M(X)\setminus \hhS^0$. By definition of $\alpha$ and $f_{\M(X)}$, we have
$\alpha(S)=D=f_{\M'(X')}(S)=f_{\M(X)}(\hhS)$. %By definition of $\alpha$ and Lemma~\ref{lem:distinguish}(i),
%we get $D=\alpha(S)\subseteq \osc([S])$.  By definition of  $\alpha$, $D$ is also included in the zero-set of $X'$. Since $X\setminus S^0=X'$,
%we conclude that $D\subseteq X^0$. Consequently,
%$D\subseteq \osc([S])\cap X^0$ and by Lemma ~\ref{lem:convex-samples3} we obtain  that  $D\subseteq \osc([S])\cap X^0=\osc([\hS])$,
%where $\osc([\hS]$ is considered  in $\M(X)=\face(X)$. Since $S^0\cap \osc([S])=\varnothing$, no element of $S^0$ is osculating
%with $[\hS]$ in $[X]$. Thus $X'$ and $S'$ are obtained from $X$ and $\hS$  by deletion of non-osculating elements of $S^0$ and the equality
%$\osc([S']=\osc([S])=\osc([\hS])$ holds. By the definition of $f_{\M(X)}$, we get $f_{\M(X)}(\hS)=f_{\M'(X')}(S')=D$.
It remains to show that $\hhS_{|D}=C_{|D}$. Pick any $e\in D$. Since $C_e\ne 0$ and $C=\alpha(S)\leq S$, we get $S_e=C_e$.
Since $D\subseteq X^0$ and $\hhS=(X\circ S)\setminus \uX$, we conclude that $\hhS_e=C_e$, establishing the equality $\hhS_{|D}=C_{|D}$. This shows that
 $\hhS$ indeed belongs to  $\Omega(C,D)$.
%	Since  $[\hS]=[X]\cap [S]\ne \varnothing$, this intersection contains at
%	least one tope and therefore $\beta(C)$ is well-defined. Moreover, for any
%	tope $\hT\in [\hS]$  we have $\hT\geq S$ because $[\hS]=[X]\cap [S]$.
\end{proof}

It remains to prove that $\beta(\alpha(S))\geq S$.
%Again, let  $\hS=X\circ S$.
Since  Lemma~\ref{lem:convex-samples2} implies $[\hS]=[X]\cap [S]$, we conclude that
$\Sep(X,S)=\varnothing$ and consequently that $\hS=X\circ S \geq S$ holds. %(Therefore, if $T$ is a tope from $[\hS]$, we will
%also have $T\geq \hS\geq S$.)
By definition, $\beta(\alpha(S))=\beta(C)$ is any tope of the form $\ttT=\ttT_0\times X_{|\uX}$ for a tope $\ttT_0$ of $\M(X)$ belonging to
the realizer ${\mathcal R} (C,D)=\bigcap_{Q\in \Omega(C,D)} [Q]$. Since
by Lemma ~\ref{beta-claim1}, the sample $ \hhS$ belongs to $ \Omega(C,D)$, the realizer ${\mathcal R}(C,D)$ is included in $[\hhS]$.
Consequently, $\ttT_0\geq \hhS$. Since $\hS=\hhS\times X_{|\uX}$  and $\ttT=\ttT_0\times X_{|\uX}$, we deduce that $\ttT\ge \hS$. Since $\hS\ge S$,
we obtain $\beta(\alpha(S))=\ttT\geq S$.
%
%
%\begin{claim}\label{lem:betaisgood}
%	For all samples  $S\in\downarr\covectors$, $\beta(\alpha(S))\geq S$.
%\end{claim}
%
%\begin{proof}
%	We show that for any choice of  $\tS$ satisfying the conditions (1)-(4) in
%	the definition of $\beta$ and for any choice of a tope $\ttT\in [\tS]$, we
%	have $\ttT\geq S$.
%	For this we show that $[\tS]=[\hS]$, where $\hS$ is the sample defined in
%	Claim~\ref{beta-claim1}. Since this implies that $\ttT\in [\hS]$, by the
%	second assertion of Claim~\ref{beta-claim1}  we get $\ttT\geq S$. So, let
%	$\tS, \tS'$ be two full samples of $\M(X)$ such that $\tS,\tS'\ge X$,
%	$\Sep(\tS,C)=\Sep(\tS',C)=\varnothing$, and
%	$f_{\M(X)}(\tS)=f_{\M(X)}(\tS')=D$. By Lemma~\ref{lem:distinguish} applied
%	to $\M(X)$, we have $[\tS]=[\tS']$. This proves that all samples $\tS$
%	available to $\beta$ yield the same convex subgraph $[\tS]=[\hS]=[X]\cap
%	[S]$. Since $\hT\geq S$ for any tope $\hT$ from $[\hS]$, we conclude that
%	for any choice of $\ttT\in [\tS]$ as $\beta(\alpha(S))$ we have $\ttT\geq
%	S$. Hence, $\beta(\alpha(S))\geq S$.
This concludes the proof of Theorem~\ref{main}, the main result of the paper.
%
%\begin{theorem} \label{main} The set $\topes$ of topes of a complex of oriented
%matroids $\M=(U,\covectors)$ of VC-dimension $d$ admits a
%proper labeled sample compression scheme of size $d$.
%\end{theorem}

\begin{remark}
 Note that by Lemma~\ref{beta-claim1} any tope $T\ge \hS$ or any tope of the
 form $\ttT=\ttT_0\times X_{|\uX}$ for a tope $\ttT_0\geq \hhS$ would be feasible. However, $S$
 and henceforth $\hS$ and $\hhS$ are not known to the reconstructor. Thus, we
 have to rely on the realizer $\mathcal{R}(C,D)\subseteq[\hhS]$.
\end{remark}

%\begin{remark}
% Note that while by Lemma~\ref{beta-claim1} any tope $T\ge \hS$ or any tope of the
% form $T=T_0\times X_{|\uX}$ for a tope $\ttT_0\geq \hhS$ would be feasible, $S$
% and henceforth $\hS$ and $\hhS$ are not known to the reconstructor. Hence, we
% have to rely on the realizer $\mathcal{R}(C,D)\subseteq[\hhS]$.
%\end{remark}

%\bigskip
We conclude this section with two examples illustrating our compression scheme:

\begin{example}\label{ex:compression}
	Consider the tope graph $G$ of a COM $\M$ of VC-dimension $3$ and a
	realizable sample $S = (++-0-0+0)$ in Figure~\ref{fig:localization_lemma}.
	$[S]$ is induced by 7 topes drawn as white vertices of $G$. Contracting the
	3 dashed $\Theta$-classes corresponding to $\{4,6,8\}=S^0$, yields the tope
	graph $G'$ of $\M'=\M\setminus S^0$. Then $T'=S\setminus S^0= (++--+)$. The
	compressor picks $X'= (0+--+)$, the lexicographically minimal lower circuit
	for $S$; $X'$ corresponds to the thick red edge in $G'$, and in covector representation $\M'(X')=(\{1\},\{(0),(+),(-)\})$.
	The compressor returns $\alpha(S) =	(+0000000)$ and $D=\{1\}$.
	The reconstructor receives $C=(+0000000)=\alpha(S)$, defines $D
	=\underline{C}=\{1\}$ and constructs the set $\mathcal{H}_D$. There are six
	covectors of $\M$ belonging to $\mathcal{H}_D$ corresponding to the thick
	red edges in $G$. By the localization lemma, they are the covectors which
	have the same VC-dimension as $X'$ and agree with $X'$ on
	$\{1,2,3,5,7\}=\underline{S}$. The reconstructor picks an arbitrary
	covector from $\mathcal{H}_D$, say $X = (0+----+-)$. The OM $\M(X)$ is
	composed of the covectors $X$ and the
	ends $T_1$ and $T_2$ of the corresponding red edge. Then, we get $\Omega(C,D) = \{T_1\}$
    and its realizer is ${\mathcal R}(C,D)= [T_1]$.
	Thus, $\beta(\alpha(S))$ is set to $T_1$, which is a white vertex
	of $G$.
%Consider the tope graph $G$ of a COM $\M$ of VC-dimension $3$ and a realizable
%sample $S = (++-0-0+0)$ in Figure~\ref{fig:localization_lemma}. $[S]$ is
%induced by 7 topes drawn as white vertices of $G$. Contracting the 3 dashed
%$\Theta$-classes corresponding to $\{4,6,8\}=S^0$, yields the tope graph $G'$
%of $\M'=\M\setminus S^0$. Then $T'=S\setminus S^0= (++--+) $. The compressor
%picks  a minimal covector $X' = (+0--+)$ of $\M'$ with $T'\geq X'$; $X'$
%corresponds to the thick red edge in $G'$. The compressor returns $\alpha(S) =
%(0+000000) $ and $D=\{2\}$.
%The reconstructor receives  $C=(0+000000)=\alpha(S)$, defines  $D
%=\underline{C}=\{2\}$ and constructs the set  $\mathcal{H}_D$. There are six
%covectors of $\M$ belonging to  $\mathcal{H}_D$   corresponding to the thick
%red edges in $G$. By the localization lemma, %
%%($\mathcal{H}_D=\mathcal{H}_{S,X'}$),
%they
%are  the covectors which have the same VC-dimension as $X'$
%and agree with $X'$ on $\{1,2,3,5,7\}=\underline{S}$.
%%
%The reconstructor picks  $X = (+0----+-) \in \mathcal{H}_D$. The OM $\M(X)$ is
%composed of
%the covectors $X$ and the ends $T$ and $T'$ of the corresponding
%red edge. Among $T$ and $T'$, only the
%tope $T = (++----+-)$ satisfies the conditions (1)-(4) in the definition of
%$\beta$. Thus, $\beta(\alpha(S))$ is set to $T$, which is a white node of $G$.
\end{example}
\begin{figure}[htb]
	\centering
	\includegraphics[width=0.74\linewidth]{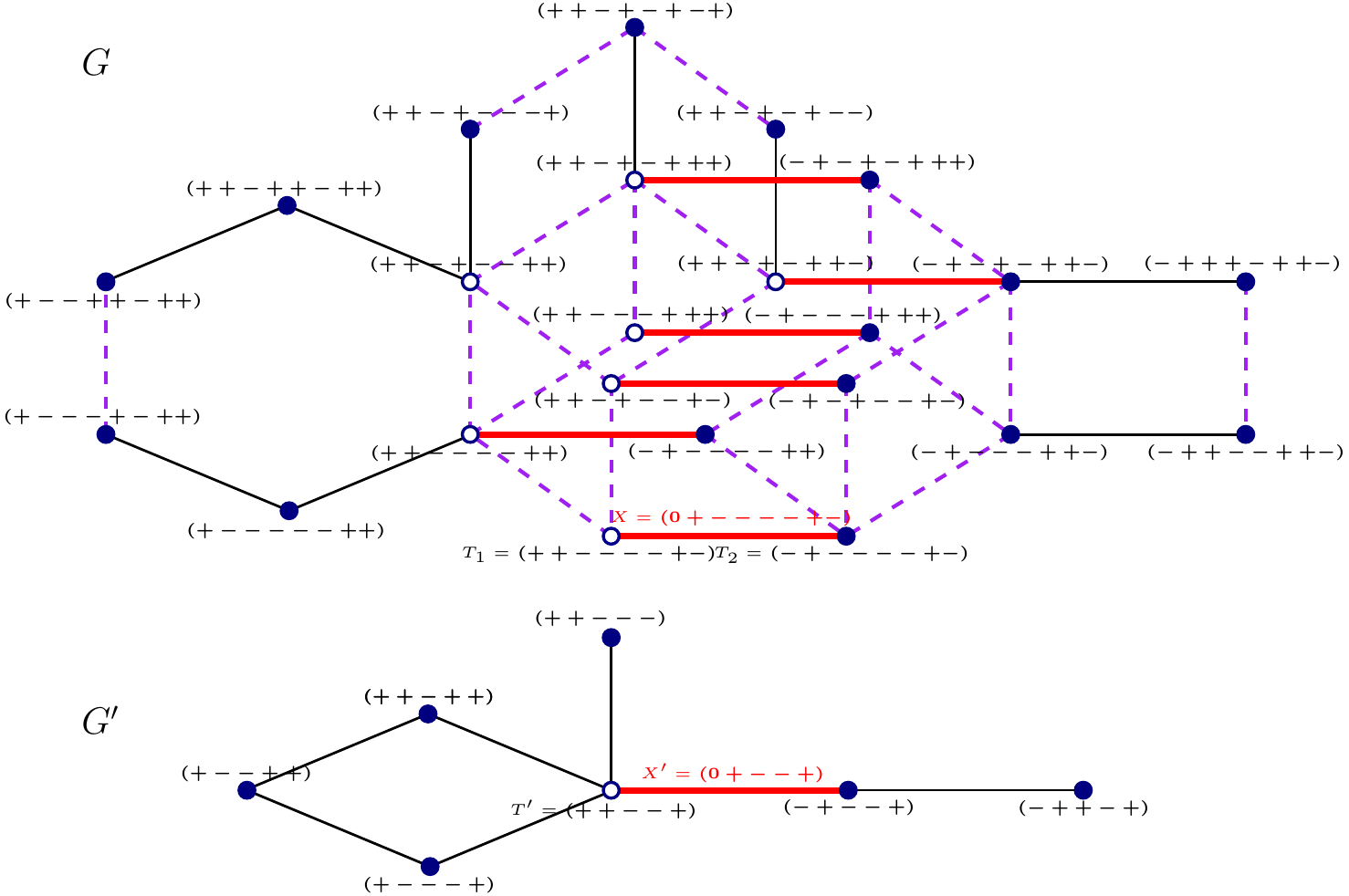}
	\caption{
		\label{fig:localization_lemma} An illustration of
		Example~\ref{ex:compression}.
% 		To compress the %realizable
%         sample $S = (++-0-0+0)$ (the vertices of $[S]$ are white) %in the
%%tope graph $G$),
%         the compressor picks
% 		 $X'=(+0--+)\leq S'=S\setminus S^0$ in $G'$ and returns $\alpha(S):=
%(0+000000)$ with $D=\{ 2\}$. The reconstructor gets $C=(0+000000)$,
%          selects $X=(+0----+-)\in \mathcal{H}_D$ ($\mathcal{H}_D$ consists of
%thick red edges in $G$) and
%          returns  $\beta(C)=T=(++----+-)\geq S$.
	}
\end{figure}

 The previous example might suggest that indeed $f_{\M}(S)=f_{\M}(S')$ and
 $S_{|f_{\M}(S)}=S'_{|f_{\M}(S')}$ together imply $[S]=[S']$. However, the next example shows that
 $[S]$ and $[S']$ might not even be contained in each other.

\begin{example}
Let $\M$ be the COM whose tope graph consists of a 4-cycle $C$ with two edges
pending on the same vertex $T$. Let $1,2$ be the $\Theta$-classes of $C$ and
$3,4$ the $\Theta$-classes of the other two edges. Each of the two pending edges
corresponds to a full sample $S=(+++0)$ and $S'=(++0+)$ respectively. It is
easy to see that $f_{\M}(S)=f_{\M}(S')=\{1,2\}$ and
$S_{|f_{\M}(S)}=S'_{|f_{\M}(S')}=(++)$ but $[S]\cap [S']=\{ T\}$. Further note that the tope
graph of $\M$ can be easily embedded into a tope graph of a uniform OM $\M'$ of
rank 3 in which $C$ is a cocircuit, the samples $S,S'$ encode the two
pending edges (with possibly larger support) and still
$f_{\M'}(S)=f_{\M'}(S')$ and $S_{|f_{\M'}(S)}=S'_{|f_{\M'}(S')}$ while
$[S]\cap [S']=\{ T\}$ is a proper subset of both $[S]$ and $[S']$.
\end{example}

\section{Conclusion}
We have presented proper labeled compression schemes of size $d$ for COMs of
VC-dimension $d$. This is a generalization of the results of~\cite{MoWa} for ample set systems,
of~\cite{BDLi} for affine arrangements of hyperplanes, and of our result~\cite{ChKnPh_CUOM} for
complexes of uniform oriented matroids.  Even though we made strong use of the structure of COMs, it
is tempting to extend our approach to other classes, e.g., bouquets of oriented
matroids~\cite{DeFu}, strong elimination systems~\cite{BaChKn}, or CW
left-regular-bands~\cite{Margolis}. Our treatment of realizable samples as
convex subgraphs suggests an angle at general partial cubes.

Our results together with the approach  of~\cite{ChKnPh_2d,ChKnPh_CUOM} suggest a new approach 
at \emph{improper} labeled compression schemes of COMs. For this one needs to answer the 
%following 
question: Is it possible to extend a given set system or a partial cube
to a COM without increasing the VC-dimension too much?

%Let us finally comment on %a stronger form of compression, namely
%%\emph{unlabeled}
In unlabeled sample compression schemes, the compressor $\alpha$
is less expressive since its image is in $2^U$ and has to satisfy
$\alpha(S)\subseteq\underline{S}$. Unlabeled compression
schemes exist for realizable affine oriented matroids~\cite{BDLi} and ample set systems with corner peelings~\cite{ChChMoWa,KuWa}.
Recently, Marc~\cite{Ma_Om} designed unlabeled sample compression schemes for OMs. His construction uses Oriented Matroid Programming and Lemma~\ref{lem:distinguish}. Moreover, he shows there are unlabeled
compression schemes for COMs with corner peelings -- a recent notion introduced in
~\cite{KnMa1}.
%This in particular, captures several classes for which unlabeled sample compression schemes were knwon.

%\comment{I did not admit that I would like to make $f$ independent of an
%ordering of $U$}

\section*{Acknowledgments} We acknowledge the referees for their careful reading of the manuscript
and useful suggestions and comments. Especially, we are very grateful to one referee who found a critical error in a previous version of the distinguishing lemma. We are also grateful to them for a new proof of  Lemma~\ref{lem:shatterandcircuits}
and to Emeric Gioan for finding a gap in a previous proof of this lemma.
This research was supported  by ANR project DISTANCIA (ANR-17-CE40-0015).

\bibliographystyle{plain}
%\bibliography{references}
%\bibliographystyle{amsalpha}
%

\end{document}